\renewcommand{\@seccntformat}[1]{{\csname the#1\endcsname}{\normalsize .}\hspace{.5em}}
\def \[{\begin{equation}}
\def \]{\end{equation}}
\def \diam{{\rm diam}}
\newtheorem{thm}{Theorem}[section]
\newtheorem{prop}[thm]{Proposition}
\newtheorem{defi}{Definition}
\newtheorem{claim}{Claim}
\newtheorem{lem}[thm]{Lemma}
\newtheorem{cor}[thm]{Corollary}
\newtheorem{remark}{Remark}
\newenvironment{wst}
{\setlength{\leftmargini}{1.5\parindent}
 \begin{itemize}
 \setlength{\itemsep}{-1.1mm}}
{\end{itemize}}
\begin{document}

\setlength{\baselineskip}{15pt}
\begin{center}{\Large \bf The Hoffman program for mixed graphs}
\vspace{4mm}

{\large Yuantian Yu$^{a}$,\, Edwin R. van Dam$^b$\footnote{Corresponding author. This work is supported by the CSC scholarship program (202306770059). \\
\hspace*{5mm}{\it Email addresses}: ytyumath@sina.com (Y.T. Yu), Edwin.vanDam@tilburguniversity.edu (E.R. van Dam).}}\vspace{2mm}

$^a$School of Science, East China University of Technology, Nanchang 330013, China

$^b$Department of Econometrics and O.R., Tilburg University, Tilburg, Netherlands
\end{center}

\noindent {\bf Abstract}:\, We consider Hoffman's program about the limit points of the spectral radius of the Hermitian adjacency matrix of mixed graphs. {In particular, we determine all such limit points. As an intermediate step, we determine all mixed graphs without negative $4$-cycle whose spectral radius does not exceed $\sqrt{2+\sqrt{5}}$.}

\vspace{2mm} \noindent{\it Keywords:}
Mixed graph; Spectral radius; Limit point; $\sqrt{2+\sqrt{5}}$

\vspace{2mm}

\section{\normalsize Background}\setcounter{equation}{0}
Let $\mathcal{G}$ be a class of graphs, and let $Q(G)$ be a square matrix associated to $G\in \mathcal{G}$, with $Q$-\textit{spectral radius} $\rho(Q(G)).$ A real number $\gamma$ is said to be a \textit{limit point} of $Q$-spectral radii ($Q$-\textit{limit point} for short) of items in $\mathcal{G}$ if there exists a sequence $(G_k)_{k\in \mathbb{N}}$ in $\mathcal{G}$ such that
$$
\text{$\rho(Q(G_i))\neq \rho(Q(G_j))$ whenever $i\neq j,$ and $\lim_{k\rightarrow\infty}\rho(Q(G_k))=\gamma.$}
$$
The two facets of the \textit{Hoffman program} with respect to the matrix $Q$ {for the class} $\mathcal{G}$ are posed by Hoffman \cite{H1972} ({see} also \cite{BB2024}):
\begin{wst}
\item[{\rm $(\ast)$}] establishing all the possible $Q$-limit points;
\item[{\rm $(\ast\ast)$}] characterizing the graphs in $\mathcal{G}$ whose $Q$-spectral radius does not exceed a fixed limit point.
\end{wst}

Let $G =(V(G), E(G))$ be a simple graph with $(0, 1)$-\textit{adjacency matrix} $A(G).$ When $\mathcal{G}$ is a class consisting of all simple graphs, Hoffman \cite{H1972} determined all the $A$-limit points of items in $\mathcal{G}$ up to {$\rho^\ast:=\sqrt{2+\sqrt{5}}$;} Shearer \cite{S1989} proved $\gamma$ is an $A$-limit point of items in $\mathcal{G}$ for all {$\gamma\geq \rho^\ast$,} which solved the first part of the Hoffman program with respect to the adjacency matrix. Smith \cite{S1970} characterized all simple graphs whose $A$-spectral radius does not exceed $2$ (the smallest $A$-limit point); Brouwer, Neumaier \cite{BN1989} and Cvetkovi\'c, Doob, Gutman \cite{CDG1989} determined all simple graphs with $A$-spectral radius between $2$ and $\rho^\ast.$

Belardo and Brunetti \cite{BB2024} completely solved the first part of the Hoffman program with respect to the $(0, \pm1)$-adjacency matrix of signed graphs. For the second part, they showed that $2$ is also the smallest $A$-limit point for signed graphs. McKee and Smyth \cite{MS2007} identified all signed graphs whose $A$-spectral radius does not exceed $2;$ recently, Wang et al. \cite{WDHL2023} determined all signed graphs with $A$-spectral radius between $2$ and $\rho^\ast,$ which solved an open problem posed by Belardo et al. \cite{BCKW2018}.

A \textit{mixed graph} $M$ is obtained from a simple graph $G$ by orienting some of its edges; we call $G$ the underlying graph of $M,$ denoted by $G:=\Gamma(M).$ %The \textit{Hermitian adjacency matrix} ($H$-\textit{matrix} for short) of $M$ is defined as a $|V(M)|\times |V(M)|$ matrix $H(M) = (h_{ij})$ with $(i,j)$-entry the imaginary unit ${\bf i}$ if there is an arc from $v_i$ to $v_j$, $-{\bf i}$ if there is an arc from $v_j$ to $v_i$, 1 if there is an undirected edge between $v_i$ and $v_j$, and 0 otherwise. 
Guo and Mohar \cite{G2017} determined all mixed graphs whose $H$-spectral radius {(the $H$-matrix of a mixed graph is defined in Definition \ref{defi1} below)} is less than $2$, from which we know that $2$ is also the smallest $H$-limit point for mixed graphs. Yuan et al. \cite{YWGQ2020} determined all mixed graphs containing no mixed $4$-cycle whose $H$-spectral radius is at most $2.$ Greaves \cite{GG2012} characterized all the gain graphs with gains from the Gaussian or Eisenstein integers whose adjacency eigenvalues are contained in $[-2, 2],$ but this does not determine all mixed graphs whose $H$-spectral radius is at most $2$ (see \cite{GM2023}). Based on the result in \cite{GG2012}, Gavrilyuk and Munemasa \cite{GM2023} completed the characterization of mixed graphs whose $H$-spectral radius is at most $2.$
\begin{figure}
\begin{center}
%\psfrag{1}{$C_4$}\psfrag{2}{$C_4'$}\psfrag{3}{$C_4''$}
\includegraphics[width=100mm]{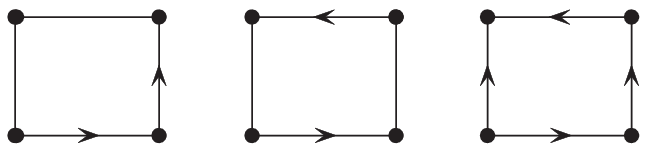} \\
  \caption{{Negative $4$-cycles.}}\label{fig00}
\end{center}
\end{figure}

Motivated by the above, we study the Hoffman program with respect to the $H$-matrix of mixed graphs. {We say that a mixed graph has no \textit{negative $4$-cycle} if it has none of the mixed graphs in Fig. \ref{fig00} as a subgraph (not necessarily induced).} In this paper, on one hand, we determine all mixed graphs without negative $4$-cycle whose $H$-spectral radius is at most $\rho^\ast.$ This solves the second part of the Hoffman program with respect to the $H$-matrix of $\mathcal{G}$ and the limit value {$\rho^\ast,$} where $\mathcal{G}$ is the class consisting of all mixed graphs without negative $4$-cycle. On the other hand, based on this result and the results in \cite{H1972,S1989,BB2024}, we identify all $H$-limit points of mixed graphs, which completely solves the first part of the Hoffman program.

The remainder of this paper is organized as follows: In Section 2, we give some preliminary results, which will be used 
in the subsequent sections. In Section 3, we determine all mixed graphs without negative $4$-cycle whose $H$-spectral radius is at most {$\rho^\ast.$} In Section 4, we identify all $H$-limit points of mixed graphs.

\section{\normalsize Definitions and preliminaries}\setcounter{equation}{0}

As usual, let $P_n, C_n$ and $K_n$ denote the path, cycle and complete graph on $n$ vertices, respectively; and let $K_{m,n}$ denote the complete bipartite graph with the orders of partite sets being $m$ and $n.$ The diameter of a simple graph $G$ is denoted by $\diam(G).$

For a mixed graph $M$, the numbers of vertices $|V(M)|$ and edges $|E(M)|$ in $M$ are called the \textit{order} and \textit{size} of $M$, respectively. We say that two vertices $u$ and $v$ are \textit{adjacent} (or \textit{neighbours}) if there is an arc or an undirected edge between them and we write it as $u\sim v$. We write an undirected edge as $\{u,v\}$ and a directed edge (or an arc) from $u$ to $v$ as $\overrightarrow{uv}.$ Usually, we denote an edge of $M$ by $uv$ if we are not concerned whether it is directed or not. For a mixed graph $M$ and a vertex $v$, define $d_M(v)$ as the \textit{degree} of $v$ in $M$, i.e., the number of vertices adjacent to $v.$ The largest degree of $M$ is denoted by $\Delta(M).$

A mixed graph $M'$ is an (induced) subgraph of $M$, denoted by $M'=M[\Gamma(M')]$, if $\Gamma(M')$ is an (induced) subgraph of $\Gamma(M)$ and the direction of each arc in $M'$ coincides with that in $M$. For a vertex subset $V'$ of $V(M),$ $M[V']$ is the subgraph of $M$ induced on $V'$. By $M-u$, we denote the mixed graph obtained from $M$ by deleting the vertex $u \in V(M)$. A mixed graph is called \textit{connected} if its underlying graph is connected. %For convenience, we denote by $C_n,\, C_n^s$ the $n$-vertex mixed cycles having no arc, just $s\, (s\leqslant n)$ consecutive arcs with the same direction, respectively. And denote by $P_n$ (resp. $K_{1,n-1}$) the $n$-vertex mixed path (resp. star) having no arc.
The \textit{girth} of a mixed graph $M$, denoted by $g(M),$ is the length of the shortest cycle contained in $\Gamma(M).$ 

%The \textit{degree} $d_M(u)$ of a vertex $u$ (in a mixed graph $M$) is the number of (directed) edges incident with it.

The \textit{Hermitian adjacency matrix} ($H$-\textit{matrix} for short) was proposed by Guo, Mohar \cite{GM2017} and Liu, Li \cite{LL2015}, independently.
\begin{defi}[\cite{GM2017,LL2015}]\label{defi1}
Let $M$ be a mixed graph, then the $H$-matrix $H(M)=(h_{uv})$ for $M$ is defined as 
$$
h_{uv}=\left\{
  \begin{array}{rl}
    {\bf i} & \text{if $\overrightarrow{uv}$ is an arc for $M$;} \\
    -{\bf i} & \text{if $\overrightarrow{vu}$ is an arc for $M$;} \\
    1 & \text{if $\{u,v\}$ is an undirected edge for $M$;} \\
    0 & \text{otherwise},
  \end{array}
\right.
$$
where ${\bf i}$ is the imaginary unit.
\end{defi}
Clearly, $H(M)$ is Hermitian, hence its eigenvalues (also called the \textit{eigenvalues} of $M$) are all real. We denote the eigenvalues of $M$ by
$\lambda_1(M) \geq \lambda_2(M) \geq \cdots  \geq \lambda_{|V(M)|}(M).$
The collection of eigenvalues of $M$ (with multiplicities) is called the \textit{spectrum} of $M$. Two mixed graphs are called \textit{cospectral} if they have the same spectrum. The \textit{spectral radius} of $M$, written as $\rho(M)$, is defined to be the largest absolute value of its eigenvalues.

Given a mixed graph $M$, let $M^T$ be its \textit{converse} (the mixed graph obtained by reversing all the arcs of $M$). It is immediate from Definition~\ref{defi1} that $H(M^T)$ is the transpose of $H(M)$. This implies that $M$ and $M^T$ are cospectral.

Guo and Mohar \cite{GM2017} proposed \textit{four-way switching} on mixed graphs, which can be summarized as follows (see also \cite{PW2020}).
\begin{defi}[\cite{GM2017}]\label{defi2}
Let $M$ be a mixed graph. A four-way switching is the operation of changing $M$ into the mixed graph $M'$ by choosing an appropriate diagonal matrix $D$ with $D_{ii}\in\{\pm1, \pm{\bf i}\}, \,i=1,\ldots, |V(M)|,$ and $H(M)=D^{-1}H(M')D.$
\end{defi}

By appending the four-way switching with vertex relabeling and taking the converse, the following is a natural notion of equivalence, which is motivated from Wissing and van Dam~\cite{Pv2022}.
\begin{defi}\label{defi3}
Two mixed graphs are said to be \textit{switching isomorphic} if one may be obtained from the other by a sequence of four-way switchings and
vertex permutations, possibly followed by taking the converse.
\end{defi}

Clearly, two mixed graphs are cospectral if they are switching isomorphic.

A \textit{mixed cycle} is a mixed graph whose underlying graph is a cycle. A mixed cycle is \textit{even} (resp. \textit{odd}) if its order is even (resp. odd). Let $\widetilde{C}$ be a mixed cycle with $\Gamma(\widetilde{C})=v_1v_2v_3\cdots v_{l-1}v_lv_1$. Then the weight of $\widetilde{C}$ in a direction is defined by
$$
wt(\widetilde{C})=h_{12}h_{23}\cdots h_{(l-1)l}h_{l1},
$$
where $h_{jk}$ is the $(v_j,v_k)$-entry of $H(\widetilde{C})$. Note that $h_{jk}\in \{1,\pm{\bf i}\}$ if $v_j\sim v_k$, so we have $wt(\widetilde{C})\in \{\pm1, \pm{\bf i}\}.$ Note that if, for one direction, the weight of a mixed cycle is $\alpha$, then for the reversed direction its weight is $\bar{\alpha}$, the conjugate of $\alpha$. For a mixed cycle $\widetilde{C}$, it is \textit{positive} (resp. \textit{negative}) if $wt(\widetilde{C})=1$ (resp. $-1$); it is \textit{imaginary} if $wt(\widetilde{C})\in \{\pm{\bf i}\}$. Further, we call a mixed cycle \textit{real} if it is positive or negative.

%A (real) elementary mixed graph is a mixed graph such that every component is an edge or a (real) mixed cycle, and every its edge-component is defined to be positive. The rank and the corank of mixed graph $M$ are $r(M)=n-c$ and $s(M)=m-n+c$, respectively, where $n,\, m$ and $c$ are the order, size and number of components of $M$, respectively; see \cite{LL2015}.

%Denote the characteristic polynomial of an $n$-vertex ($n\geq1$) mixed graph $M$ by\footnote{$\Phi(M,x)=1$ for a mixed graph $M$ on $0$ vertices.}\vspace{2mm}:
%\[\label{eq:2.1}
%\Phi(M,x)=\det(xI-H(M))=x^n+c_1x^{n-1}+c_2x^{n-2}+\cdots+c_{n-1}x+c_n.
%\]
%Liu and Li \cite{LL2015} determined all coefficients for the characteristic polynomial of an $n$-vertex mixed graph.
%\begin{lem}[\cite{LL2015}]\label{lem2.2}
%Let $M$ be a mixed graph of order $n$, then the coefficients of the characteristic polynomial $\Phi(M,x)$ in \eqref{eq:2.1} are given by
%$$
%  c_k=\sum_{M'}(-1)^{-k+r(M')+l(M')}\cdot 2^{s(M')},
%$$
%where the summation is over all real elementary subgraphs $M'$ of $M$ with $k$ vertices and $l(M')$ denotes the number of negative mixed cycles in $M'.$
%\end{lem}
\begin{lem}[\cite{LL2015}]\label{lem2.02}
If $M$ is a mixed graph of order $n$ without real mixed odd cycles, then its spectrum is symmetric about zero.
\end{lem}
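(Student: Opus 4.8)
The plan is to work with the characteristic polynomial
$\phi(M,x)=\det\!\big(xI-H(M)\big)=\sum_{k=0}^{n}a_k\,x^{\,n-k}$
and to show that $a_k=0$ for every odd $k$. This forces $\phi(M,x)$ to be an even function of $x$ when $n$ is even and an odd function when $n$ is odd, so that $\phi(M,-x)=(-1)^n\phi(M,x)$; consequently $\lambda$ is an eigenvalue of $M$ (with a given multiplicity) if and only if $-\lambda$ is. This is exactly the asserted symmetry of the spectrum about zero.

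The main tool is a Sachs-type coefficient formula for the $H$-matrix, as developed in \cite{LL2015}. Expanding $\det\!\big(xI-H(M)\big)$ over permutations, the only permutations that survive are those whose nontrivial cycles are supported on edges and cycles of $\Gamma(M)$: each transposition picks up a factor $(-h_{uv})(-h_{vu})=|h_{uv}|^2=1$ from an edge, while a permutation cycle running along a mixed cycle $\widetilde C$ together with its reversal contributes a factor proportional to $wt(\widetilde C)+\overline{wt(\widetilde C)}=2\,\mathrm{Re}\big(wt(\widetilde C)\big)$, using that traversing $\widetilde C$ in the two directions produces conjugate weights (as noted in the excerpt). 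Consequently each $a_k$ is a signed sum over the \emph{basic subgraphs} on $k$ vertices (vertex-disjoint unions of edges and mixed cycles), in which every mixed-cycle component $\widetilde C$ enters through the factor $2\,\mathrm{Re}\big(wt(\widetilde C)\big)$. I would first record this formula carefully, being explicit about the signs, since the only feature I actually need downstream is the cycle factor $2\,\mathrm{Re}\big(wt(\widetilde C)\big)$.

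The crucial observation is that an imaginary cycle contributes nothing: if $wt(\widetilde C)\in\{\pm{\bf i}\}$ then $\mathrm{Re}\big(wt(\widetilde C)\big)=0$, so any basic subgraph containing an imaginary cycle contributes $0$ to the corresponding $a_k$. Now fix an odd $k$. Any basic subgraph $U$ on $k$ vertices is a vertex-disjoint union of edges and cycles; the edges cover an even number of vertices, so the cycle components of $U$ must cover an odd number of vertices in total, forcing at least one odd cycle among them. By hypothesis $M$ has no real mixed odd cycle, so every odd cycle of $M$ --- in particular this one --- is imaginary. Hence every basic subgraph on an odd number of vertices contains an imaginary cycle and contributes $0$, giving $a_k=0$ for all odd $k$, as required.

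I expect the only genuine work to lie in the second step: stating the coefficient theorem for $H(M)$ with the correct signs and the $2\,\mathrm{Re}\big(wt(\widetilde C)\big)$ weighting, and verifying the conjugate-weight behaviour under reversal. The parity bookkeeping in the final step is elementary. An alternative route would be to exhibit a unitary $U$ with $U\,H(M)\,U^{\ast}=-H(M)$ --- in the classical simple-graph case this is the diagonal $\pm1$ matrix coming from a bipartition --- but under the weaker hypothesis of no real odd cycle (rather than bipartiteness of $\Gamma(M)$) such a conjugation is awkward to write down explicitly, so the coefficient-theorem argument is both cleaner and more robust.
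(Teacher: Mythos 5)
Your proposal is correct: the paper states this lemma without proof, citing \cite{LL2015}, and your argument is essentially the standard proof from that source --- the Sachs-type coefficient formula for $H(M)$ (the same \cite[Theorem 2.8]{LL2015} the paper invokes when discussing cycle-isomorphism), in which every cycle component enters through $2\,\mathrm{Re}\big(wt(\widetilde{C})\big)$, so under the hypothesis every basic subgraph on an odd number of vertices contains an imaginary odd cycle and contributes $0$, killing all odd coefficients and giving $\Phi(M,-x)=(-1)^n\Phi(M,x)$. Your parity bookkeeping and the conjugate-weight observation under reversal are both sound, and the sign conventions you defer are indeed immaterial since the vanishing of each offending term does not depend on them.
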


Now we give an equivalent condition for switching isomorphism, which may be used to determine quickly whether or not a given pair of mixed graphs is switching isomorphic.
\begin{lem}[\cite{WY2020}]\label{lem2.4}
Let $M$ and $M'$ be two connected mixed graphs with the same underlying graph $G.$ Then $M$ and $M'$ are switching isomorphic if and only if there is a mixed graph $M''$ isomorphic to $M$ with $wt(M''[C])=wt(M'[C])$ for every cycle $C$ in $G.$
\end{lem}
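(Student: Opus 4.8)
The plan is to reduce the statement to the special case in which $M$ and $M'$ literally share the same vertex-labelled underlying graph $G$ and differ only by a four-way switching, treating the vertex permutation and the converse as bookkeeping. The mixed graph $M''$ in the statement is there precisely to absorb the permutation (and, when it occurs, the converse), so the genuine content is the following claim: two mixed graphs on the same labelled $G$ satisfy $H(M)=D^{-1}H(M')D$ for some diagonal $D$ with $D_{ii}\in\{\pm1,\pm{\bf i}\}$ if and only if $wt(M[C])=wt(M'[C])$ for every cycle $C$ of $G$. I would isolate this equivalence as the heart of the argument and recover the full statement at the end.

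For the forward direction I would first record that a single four-way switching preserves every cycle weight. Writing $h_{uv}(M)=\overline{D_{uu}}\,h_{uv}(M')\,D_{vv}$ (valid since $|D_{ii}|=1$ gives $D^{-1}=D^{*}$) and multiplying around a cycle $C=v_1v_2\cdots v_\ell v_1$, the diagonal factors cancel: each $D_{v_jv_j}$ occurs once with and once without a conjugate, so its two appearances pair up as $\overline{D_{v_jv_j}}D_{v_jv_j}=1$, leaving $wt(M[C])=wt(M'[C])$. A vertex permutation carries cycles to cycles and preserves their weights, while taking the converse conjugates every entry and hence replaces each weight by its conjugate. Consequently, if $M$ and $M'$ are switching isomorphic there is an isomorph $M''$ of $M$ (obtained from the permutation, together with the converse in the case it appears) whose weights agree with those of $M'$ on every cycle.

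The substantive half is the converse implication. Given that $M''$ and $M'$, now on the same labelled $G$, have equal weight on every cycle, I would construct $D$ along a spanning tree $T$ of the connected graph $G$. Rooting $T$, setting $D$ to $1$ at the root, and propagating outward, each tree edge $uv$ forces $D_{vv}=D_{uu}\,h_{uv}(M'')\,\overline{h_{uv}(M')}$; this is well defined because tree paths are unique, and it keeps $D_{vv}\in\{\pm1,\pm{\bf i}\}$ since every nonzero entry lies in $\{1,\pm{\bf i}\}$ and products and conjugates of such entries again lie in the fourth roots of unity $\{\pm1,\pm{\bf i}\}$. This choice makes the switched $M'$ agree with $M''$ on all tree edges. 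For a non-tree edge $e$, the fundamental cycle it closes with $T$ has all its remaining entries already matched, so because four-way switching preserves that cycle's weight and the hypothesis supplies equal weights, the single entry on $e$ is forced to match as well. Hence the switched $M'$ equals $M''$, so $M''$ and $M'$ are four-way switching equivalent, and composing with the isomorphism $M''\cong M$ yields switching isomorphism of $M$ and $M'$.

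I expect the main obstacle to be organisational rather than computational: keeping the closure under the admissible switching group $\{\pm1,\pm{\bf i}\}$ honest, so that the constructed $D$ really produces a mixed graph (i.e.\ every switched entry is again in $\{1,\pm{\bf i}\}$), and cleanly handling the converse, which conjugates all cycle weights simultaneously. That global conjugation is exactly what justifies phrasing the criterion through an isomorph $M''$ of $M$ and permitting the converse in the definition of switching isomorphism; I would make sure the forward-direction case analysis (converse present or absent) lines up with the single clean equality $wt(M''[C])=wt(M'[C])$ demanded in the statement.
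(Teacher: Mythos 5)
The paper gives no proof of Lemma~\ref{lem2.4} at all --- it is imported from \cite{WY2020} --- so your argument has to stand on its own. Its core is correct and is the expected mechanism: the telescoping cancellation $\overline{D_{v_jv_j}}D_{v_jv_j}=1$ around a cycle shows that four-way switching preserves every cycle weight, and in the substantive direction your spanning-tree propagation of $D$ followed by the fundamental-cycle forcing of each non-tree entry is sound. In particular, the worry you flag about switched entries escaping $\{1,\pm{\bf i}\}$ resolves itself exactly as you suggest: each non-tree entry of the switched $M'$ is forced to \emph{equal} the corresponding entry of $M''$, so the diagonal matrix you build is an ``appropriate'' one in the sense of Definition~\ref{defi2}. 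This is the same fundamental-cycle technique the paper itself invokes for Lemma~\ref{lem2.06} via \cite[Theorem 3.1]{SK2019}, and your ``if'' direction --- the one this paper actually uses downstream --- is complete.

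The genuine gap is in your forward direction when the converse occurs. You take $M''$ to be $M$ transformed by ``the permutation, together with the converse in the case it appears''; but the converse of a relabelling of $M$ is isomorphic to $M^T$, not to $M$, so this $M''$ is not admissible in the statement, and the equality $wt(M''[C])=wt(M'[C])$ is not established. The defect is not merely organisational: with Definition~\ref{defi3} read literally, this direction can actually fail. Take $G$ to be a triangle $v_1v_2v_3$ with pendant paths of lengths $1,2,3$ attached at $v_1,v_2,v_3$, so that $\mathrm{Aut}(G)$ is trivial; let $M$ have the single arc $\overrightarrow{v_1v_2}$ and all other edges undirected, so the triangle has weight ${\bf i}$ in the direction $v_1v_2v_3v_1$. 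Then $M'=M^T$ is switching isomorphic to $M$ (empty sequence of switchings followed by the converse), yet the only mixed graph isomorphic to $M$ with underlying graph $G$ is $M$ itself, whose triangle weight is ${\bf i}\neq-{\bf i}=wt(M'[C])$. So the ``only if'' direction needs a global-conjugation caveat: either allow $M''$ to be isomorphic to $M$ \emph{or} to $M^T$, or require $wt(M''[C])=wt(M'[C])$ for all $C$ \emph{or} $wt(M''[C])=\overline{wt(M'[C])}$ for all $C$ simultaneously (conjugating every weight at once is precisely the effect of the converse, and per-cycle conjugation would be too weak). With that amendment your two-case analysis goes through verbatim, and your ``if'' direction extends since the converse is itself an allowed operation; as written, though, the sentence absorbing the converse into an isomorph of $M$ is false.
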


By Lemma~\ref{lem2.4}, the following lemma is clear, see also \cite{G2017}.
\begin{lem}[\cite{G2017}]\label{lem2.5}
Let $F$ be a forest. Then every mixed graph whose underlying graph is isomorphic to $F$ is switching isomorphic to $F.$
\end{lem}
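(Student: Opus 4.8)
The plan is to deduce the statement from Lemma~\ref{lem2.4}, exploiting that a forest has no cycles so that the cycle--weight condition in that lemma is vacuously satisfied. Write $F$ also for the mixed graph obtained from the forest by leaving every edge undirected; this is the target to which every orientation should be switching isomorphic. Since Lemma~\ref{lem2.4} is stated for \emph{connected} mixed graphs, the first step is to reduce to the case of a single tree. A four-way switching uses a diagonal matrix $D$ (Definition~\ref{defi2}), which acts on each vertex independently, and the $H$-matrix of a disjoint union is block diagonal with one block per component; hence a switching (and likewise a component-preserving vertex permutation, or taking the converse) can be carried out separately on each connected component. Consequently it suffices to prove the claim for each tree component and then assemble the componentwise switchings.

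So let $T$ be a tree and let $M$ be a mixed graph with $\Gamma(M)=T$. I would apply Lemma~\ref{lem2.4} with underlying graph $G=T$, taking the two mixed graphs to be $M$ and the all-undirected tree $T$. The lemma asks for a relabeled copy $M''$ of $M$ with $wt(M''[C])=wt(T[C])$ for every cycle $C$ of $G$. Because $T$ is acyclic there are no cycles $C$ to check, so the condition holds automatically with $M''=M$ (the identity isomorphism), and Lemma~\ref{lem2.4} yields that $M$ and $T$ are switching isomorphic. This is exactly the assertion for a single tree.

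Because the underlying graphs are acyclic, I expect no genuine obstacle here; the only point needing a line of care is the passage from connected components to the whole forest, i.e.\ checking that switching isomorphism (Definition~\ref{defi3}) respects disjoint unions, as noted above. For completeness one may also argue directly without Lemma~\ref{lem2.4}: root each tree, set the switching value $D_{rr}=1$ at the root $r$, and propagate outward, choosing $D_{vv}=D_{uu}\,h_{uv}$ across the edge from an already-processed parent $u$ to its child $v$. Since $\{\pm1,\pm{\bf i}\}$ is closed under multiplication and $h_{uv}\in\{1,\pm{\bf i}\}$, each $D_{vv}$ again lies in $\{\pm1,\pm{\bf i}\}$, and because a tree has a unique path from the root to every vertex the assignment is never over-determined, so no consistency conflict arises. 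A short check shows this $D$ sends every entry $h_{uv}$ to $1$, realizing the switching isomorphism to $F$ explicitly; the acyclicity of $F$ is precisely what guarantees both the existence and the consistency of such a $D$.
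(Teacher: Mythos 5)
Your proposal is correct and follows essentially the same route as the paper, which derives Lemma~\ref{lem2.5} directly from Lemma~\ref{lem2.4} by noting that a forest has no cycles, so the cycle-weight condition holds vacuously. Your extra care about reducing to connected components (since Lemma~\ref{lem2.4} assumes connectedness) and your explicit root-and-propagate construction of the switching matrix $D$ are both sound refinements that the paper leaves implicit.
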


A \textit{signed graph} $\dot{G}=(G, \sigma)$ is a pair $(G, \sigma),$ where $G$ is a simple graph (called the underlying graph of $\dot{G}$), and $\sigma: E(G)\rightarrow\{+1, -1\}$ is a sign function. The adjacency matrix of $\dot{G}$ is defined as a $|V(G)|\times |V(G)|$ matrix $A(\dot{G}) = (a_{ij})$ with $a_{ij} = \sigma(v_iv_j)$ if $v_iv_j\in E(G),$ and $a_{ij} =0$ otherwise. 
For a signed graph $\dot{G}=(G,\sigma)$, the sign of a signed cycle $\dot{C}$ in $\dot{G}$ is $\sigma(\dot{C}) =\prod_{e\in \dot{C}} \sigma(e).$ The signed cycle $\dot{C}$ is \textit{positive} (resp. \textit{negative}) if $\sigma(\dot{C})=1$ (resp. $-1$).

Let $M$ be a mixed graph with $\Gamma(M)=G.$ We say $M$ is \textit{cycle-isomorphic} to a signed graph $\dot{G}=(G,\sigma)$, if there is a mixed graph $M'$ isomorphic to $M$ with $wt(M'[C])=\sigma(\dot{C})$ for every cycle $C$ in $G.$ 
By comparing the coefficients of the characteristic polynomial of the adjacency matrix for a signed graph (see \cite[Theorem 2.1]{BCKW2018}) and those of the characteristic polynomial of the Hermitian adjacency matrix for a mixed graph (see \cite[Theorem 2.8]{LL2015}), it follows that if a mixed graph $M$ is cycle-isomorphic to a signed graph $\dot{G}$, then $H(M)$ and $A(\dot{G})$ have the same characteristic polynomial, and so they are cospectral.
\begin{remark}
We note that by \cite[Theorem 2.2]{SK2019} it can be shown also that if a mixed graph $M$ is cycle-isomorphic to a signed graph $\dot{G},$ then it is also switching isomorphic to $\dot{G}$ in the sense that there is a diagonal $(\pm1,\pm {\bf i})$-matrix $D$ such that $H(M')=D^{-1}A(\dot{G})D,$ where $M'$ is isomorphic to $M.$
\end{remark}

Let $G$ be a connected graph with a fixed spanning tree $T$. Then for each edge $e\in E(G)\backslash E(T),$
adding the edge $e$ to $T$ creates a unique cycle in $T\cup\{e\}.$ This cycle is called a \textit{fundamental cycle} of $G$ associated with $T.$ {Let $M'$ (resp. $\dot{G}$) be a mixed graph (resp. signed graph) with underlying graph $G.$ For a cycle $C$ in $G,$ we denote $\dot{C}$ the signed cycle in $\dot{G}$ whose underlying graph is $C$.} A result of Samanta and Kannan \cite[Theorem 3.1]{SK2019} shows $wt(M'[C])=\sigma(\dot{C})$ for every cycle $C$ in $G$ if and only if $wt(M'[C])=\sigma(\dot{C})$ for every fundamental cycle $C$ of $G$ associated with $T.$
\begin{lem}\label{lem2.06}
Let $M$ be a connected mixed graph with $\Gamma(M)=G.$ If $M$ contains no imaginary mixed cycle, then $M$ is cycle-isomorphic to a signed graph $\dot{G}$.
\end{lem}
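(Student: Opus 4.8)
The plan is to realize the required signed graph $\dot G=(G,\sigma)$ by prescribing its edge signs on a spanning tree together with the fundamental cycles that tree determines, and then to upgrade the resulting agreement on fundamental cycles to agreement on \emph{all} cycles by means of the quoted result of Samanta and Kannan \cite[Theorem 3.1]{SK2019}. Throughout I would take the isomorphic copy $M'$ appearing in the definition of cycle-isomorphism to be $M$ itself (the identity is an isomorphism), so that it suffices to produce a sign function $\sigma$ on $E(G)$ with $wt(M[C])=\sigma(\dot C)$ for every cycle $C$ of $G$.

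First I would fix a spanning tree $T$ of the connected graph $G$, which exists since $M$, and hence $G$, is connected. For each non-tree edge $e\in E(G)\setminus E(T)$ let $C_e$ denote the fundamental cycle created by adding $e$ to $T$. Because $C_e$ is a cycle of $G$ and $M$ contains no imaginary mixed cycle by hypothesis, its weight satisfies $wt(M[C_e])\in\{\pm 1\}$. This is exactly the point at which the hypothesis is used: a sign function takes values only in $\{\pm 1\}$, so the construction could not even begin if some fundamental cycle were imaginary. I would then define $\sigma\colon E(G)\to\{\pm 1\}$ by setting $\sigma(f)=1$ for every tree edge $f\in E(T)$ and $\sigma(e)=wt(M[C_e])$ for every non-tree edge $e$.

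With this choice, for each non-tree edge $e$ the sign of the corresponding fundamental cycle is $\sigma(\dot C_e)=\prod_{g\in C_e}\sigma(g)=\sigma(e)\prod_{f\in C_e\cap E(T)}\sigma(f)=\sigma(e)=wt(M[C_e])$, since all tree edges on $C_e$ contribute the factor $1$. Hence $wt(M[C])=\sigma(\dot C)$ holds for every fundamental cycle $C$ associated with $T$. Applying \cite[Theorem 3.1]{SK2019} then forces $wt(M[C])=\sigma(\dot C)$ for \emph{every} cycle $C$ of $G$, which is precisely the assertion that $M$ is cycle-isomorphic to $\dot G=(G,\sigma)$.

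The step I expect to be the crux is the passage from fundamental cycles to arbitrary cycles. Conceptually this works because any cycle is a symmetric difference (a $\mathbb{Z}_2$-combination) of fundamental cycles, and on such combinations the cycle weight is multiplicative — but only when no conjugation ambiguity intervenes, i.e.\ when all cycle weights are real. The absence of imaginary mixed cycles is exactly what guarantees that the weight map descends to a genuine homomorphism from the binary cycle space into $\{\pm 1\}$, matching the manifestly multiplicative behaviour of signed-cycle signs. This multiplicativity is the content encapsulated in \cite[Theorem 3.1]{SK2019}, and it is what makes the two functions agree everywhere once they agree on the basis of fundamental cycles.
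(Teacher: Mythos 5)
Your proposal is correct and follows essentially the same route as the paper's own proof: the identical spanning-tree construction ($\sigma\equiv 1$ on tree edges, $\sigma(e)=wt(M[C_e])$ on each non-tree edge $e$), with the hypothesis of no imaginary mixed cycles used in the same place to ensure $wt(M[C_e])\in\{\pm1\}$, and the same appeal to \cite[Theorem 3.1]{SK2019} to upgrade agreement on fundamental cycles to agreement on all cycles. There is no gap; your closing remark on why the fundamental-cycle basis suffices is a correct gloss on what that cited theorem encapsulates.
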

\begin{proof}
If $G$ is a tree, then the result is clear. Now let $G$ be a connected graph with at least one cycle. Take a spanning tree $T$ of $G,$ let $\{C^1,\ldots,C^t\}$ be the family of all fundamental cycles of $G$ associated with $T,$ and let $e_i$ be the edge in $E(C^i)\backslash E(T)$ for $i=1,\ldots,t.$ Since $M$ contains no imaginary mixed cycle, $wt(M[C^i])\in \{\pm 1\}.$ Construct a sign function $\sigma$ on $E(G)$ as follows: $\sigma(e)=1$ for all $e\in E(T)$ and $\sigma(e_i)=wt(M[C^i])$ for all $i=1,\ldots,t$. Then the signed graph $\dot{G}=(G,\sigma)$ has the property that $\sigma(\dot{C^i})=wt(M[C^i])$ for all $i=1,\ldots,t$, and so $\sigma(\dot{C})=wt(M[C])$ for every cycle $C$ in $G.$ 
\end{proof}
Denote the characteristic polynomial of an $n$-vertex ($n\geq1$) mixed graph $M$ by\footnote{$\Phi(M,x)=1$ for a mixed graph $M$ on $0$ vertices.}: $\Phi(M,x)=\det(xI-H(M)).$ 
Yuan et al. \cite[Corollary~2.6]{YWGQ2020} presented a recurrence relation for the characteristic polynomials of mixed graphs.
\begin{lem}[\cite{YWGQ2020}]\label{lem2.3}
Let $M$ be a mixed graph, and let $u$ be a vertex of $M$. Then
$$
\Phi(M,x)=x\Phi(M-u,x)-\sum_{v\sim u}\Phi(M-u-v,x)-2\sum_{\widetilde{C}\in \mathscr{C}^+(u)}\Phi(M-V(\widetilde{C}),x)+2\sum_{\widetilde{C}\in \mathscr{C}^-(u)}\Phi(M-V(\widetilde{C}),x),
$$
where $\mathscr{C}^+(u)$ (resp. $\mathscr{C}^-(u)$) is the set of positive (resp. negative) mixed cycles containing $u$.
\end{lem}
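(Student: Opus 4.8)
The plan is to expand $\Phi(M,x)=\det(xI-H(M))$ by the Leibniz (permutation) formula and then to group the permutations according to the cycle containing the distinguished vertex $u$. Indexing rows and columns by $V(M)$ and using that $H(M)$ has zero diagonal, we write
\[
\Phi(M,x)=\sum_{\pi\in S_{V(M)}}\operatorname{sgn}(\pi)\prod_{p\in V(M)}\bigl(x\,\delta_{p,\pi(p)}-h_{p,\pi(p)}\bigr),
\]
so that each fixed point of $\pi$ contributes a factor $x$, while a nontrivial cyclic factor $(p_1\,p_2\,\cdots\,p_\ell)$ of $\pi$ is nonzero only when $p_1\sim p_2\sim\cdots\sim p_\ell\sim p_1$ in $M$. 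A direct sign count shows such a cyclic factor of length $\ell\ge 2$ contributes exactly $-\,h_{p_1p_2}h_{p_2p_3}\cdots h_{p_\ell p_1}$, i.e.\ minus the weight of the corresponding directed closed walk, because the cyclic sign $(-1)^{\ell-1}$ and the sign $(-1)^{\ell}$ from the $\ell$ off-diagonal entries $-h$ combine to $-1$.

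First I would isolate the contribution of the $\pi$-cycle through $u$ and observe that the residual permutations run over all of $V(M)$ minus that cycle's support; since the $H$-matrix of an induced mixed subgraph is the corresponding principal submatrix of $H(M)$, this residual sum is exactly the characteristic polynomial of the induced subgraph obtained by deleting that support. If $u$ is a fixed point we get $x\,\Phi(M-u,x)$. If $u$ lies in a transposition $(u\,v)$, then $v\sim u$ and, since $h_{vu}=\overline{h_{uv}}$ with $|h_{uv}|=1$, the factor is $-h_{uv}h_{vu}=-1$; summing over neighbours gives $-\sum_{v\sim u}\Phi(M-u-v,x)$.

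The interesting case is when $u$ lies in a cyclic factor of length $\ell\ge 3$. Its support is a cycle $\widetilde C$ of $\Gamma(M)$ through $u$, and exactly two $\pi$-cycles have this support, namely the two directions of traversal; by conjugate symmetry of $H(M)$ their weights are $wt(\widetilde C)$ and $\overline{wt(\widetilde C)}$. Hence the combined contribution of this pair is $-\bigl(wt(\widetilde C)+\overline{wt(\widetilde C)}\bigr)\Phi(M-V(\widetilde C),x)$, and $wt(\widetilde C)+\overline{wt(\widetilde C)}$ equals $2$ when $\widetilde C$ is positive, $-2$ when negative, and $0$ when imaginary (as then $wt(\widetilde C)\in\{\pm\mathbf{i}\}$). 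Summing over all cycles through $u$ therefore yields $-2\sum_{\widetilde C\in\mathscr C^+(u)}\Phi(M-V(\widetilde C),x)+2\sum_{\widetilde C\in\mathscr C^-(u)}\Phi(M-V(\widetilde C),x)$, and adding the three groups gives the stated identity.

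The step I expect to be the main obstacle is the sign and multiplicity bookkeeping: verifying that the length-$\ell$ cyclic factor really contributes $-1$ times the directed weight, that for $\ell\ge 3$ the two traversal directions give distinct $\pi$-cycles whose weights are complex conjugates, and that the pairing produces the factor $2$ together with the clean dichotomy in which imaginary cycles cancel and only real cycles (with the correct sign) survive. Everything else --- the factorization of the determinant over the cycles of $\pi$ and the identification of the residual sums with characteristic polynomials of induced subgraphs --- is routine once this is in place.
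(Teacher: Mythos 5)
This lemma is imported: the paper cites it from \cite{YWGQ2020} (Corollary~2.6 there) and contains no proof of its own, so there is nothing internal to compare against; what can be judged is whether your blind argument is a valid proof, and it is. The Leibniz expansion of $\det(xI-H(M))$, the factorization of $\operatorname{sgn}(\pi)$ and of the entry product over the cycles of $\pi$, the sign count $(-1)^{\ell-1}\cdot(-1)^{\ell}=-1$ for a cyclic factor of length $\ell\ge 2$, the transposition case via $h_{uv}h_{vu}=|h_{uv}|^{2}=1$, and the identification of each residual sum with $\Phi$ of the induced subgraph (because the $H$-matrix of an induced subgraph is a principal submatrix of $H(M)$) are all correct, and the conjugate pairing $wt(\widetilde{C})+\overline{wt(\widetilde{C})}\in\{2,-2,0\}$ is exactly the mechanism that makes imaginary cycles vanish and produces the coefficients $-2$ and $+2$ on $\mathscr{C}^{+}(u)$ and $\mathscr{C}^{-}(u)$. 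This is the Hermitian analogue of Schwenk's vertex-deletion formula, proved the classical Harary--Sachs way; the cited source obtains the recurrence from the coefficient theorem for the Hermitian characteristic polynomial, so your direct derivation is, if anything, more self-contained. One point of precision you should fix in a final write-up: the claim that ``exactly two $\pi$-cycles have this support'' is only true if ``support'' means the cycle subgraph (the cyclic ordering of the vertices up to reversal), not the vertex set alone --- in $K_4$, for instance, a set of four vertices carries three distinct $4$-cycles, each with its own pair of orientations. Your pairing by direction of traversal within a fixed cyclic order is the right grouping, and since the sums in the lemma run over mixed cycles as subgraphs this causes no error, but the grouping should be stated as: each permutation cycle of length $\ell\ge 3$ through $u$ determines a cycle subgraph $\widetilde{C}\ni u$ of $\Gamma(M)$ (not necessarily induced) together with one of its two orientations, and conversely.
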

The following is a direct corollary of Lemma~\ref{lem2.3}.
\begin{cor}\label{cor2.03}
Let $\widetilde{C}$ be an imaginary mixed cycle of length $k,$ then
$$
\Phi(\widetilde{C},x)=x\Phi(P_{k-1},x)-2\Phi(P_{k-2},x).
$$
\end{cor}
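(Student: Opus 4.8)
The plan is to apply the vertex-deletion recurrence of Lemma~\ref{lem2.3} to $M=\widetilde{C}$ with $u$ an arbitrary vertex of the cycle, and then to simplify each of the four terms on the right-hand side by exploiting the very simple structure that a cycle exhibits once any single vertex is removed.

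First I would record the relevant subgraph data. Writing $\Gamma(\widetilde{C})=v_1v_2\cdots v_kv_1$ and taking $u=v_1$, the vertex $u$ has exactly two neighbours in $\widetilde{C}$. Deleting $u$ yields a mixed graph whose underlying graph is the path $P_{k-1}$, and deleting $u$ together with either of its two neighbours yields a mixed graph whose underlying graph is the path $P_{k-2}$. Since a path is a forest, Lemma~\ref{lem2.5} guarantees that each of these subgraphs is switching isomorphic to the corresponding standard path, and switching-isomorphic mixed graphs are cospectral; hence $\Phi(\widetilde{C}-u,x)=\Phi(P_{k-1},x)$ and $\Phi(\widetilde{C}-u-v,x)=\Phi(P_{k-2},x)$ for each neighbour $v$ of $u$. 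The neighbour sum $\sum_{v\sim u}\Phi(M-u-v,x)$ therefore contributes exactly $2\Phi(P_{k-2},x)$, while the leading term $x\Phi(M-u,x)$ contributes $x\Phi(P_{k-1},x)$.

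Next I would dispose of the two cycle sums. The only cycle contained in $\Gamma(\widetilde{C})$ is the full cycle $\widetilde{C}$ itself, and by hypothesis $\widetilde{C}$ is imaginary, so $wt(\widetilde{C})\in\{\pm{\bf i}\}$ and $\widetilde{C}$ is neither positive nor negative. Consequently the sets $\mathscr{C}^+(u)$ and $\mathscr{C}^-(u)$ of real (positive, resp.\ negative) mixed cycles through $u$ are both empty, so the last two terms of the recurrence vanish. Substituting these evaluations into Lemma~\ref{lem2.3} yields $\Phi(\widetilde{C},x)=x\Phi(P_{k-1},x)-2\Phi(P_{k-2},x)$, as claimed.

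There is no genuine obstacle here; the statement is an essentially immediate specialization of Lemma~\ref{lem2.3}. The one point demanding a little care is the identification $\Phi(\widetilde{C}-u,x)=\Phi(P_{k-1},x)$ and $\Phi(\widetilde{C}-u-v,x)=\Phi(P_{k-2},x)$: one must invoke Lemma~\ref{lem2.5} (rather than mere isomorphism of underlying graphs) to ensure that removing a vertex or an incident edge from an imaginary cycle produces a subgraph whose characteristic polynomial actually coincides with that of the undirected path, since the entries $h_{jk}\in\{1,\pm{\bf i}\}$ along the remaining path need not be real. Once that is observed, the computation collapses to the displayed identity.
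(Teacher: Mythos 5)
Your proposal is correct and is exactly the argument the paper intends: the statement is presented as a direct corollary of Lemma~\ref{lem2.3}, obtained by noting that an imaginary cycle lies in neither $\mathscr{C}^+(u)$ nor $\mathscr{C}^-(u)$ and that the deleted subgraphs are mixed paths, switching isomorphic (hence cospectral) to $P_{k-1}$ and $P_{k-2}$ by Lemma~\ref{lem2.5}. Your explicit invocation of Lemma~\ref{lem2.5} to justify $\Phi(\widetilde{C}-u,x)=\Phi(P_{k-1},x)$ is precisely the point the paper leaves implicit, so there is nothing to add.
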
 
\begin{lem}[\cite{S1974,WBH2010}]\label{lem2.6}
The characteristic polynomial of the path satisfies the recurrence relation
\[\label{eq:2.2}
\text{$\Phi(P_0,x)=1,\,\,\,\Phi(P_1,x)=x,\,\,\,\Phi(P_n,x)=x\Phi(P_{n-1},x)-\Phi(P_{n-2},x)$ for $n\geq2$}.
\]
Moreover, the recurrence relation gives
\[\label{eq:2.3}
\text{$\Phi(P_n,x)=\frac{\varphi(x)^{2n+2}-1}{\varphi(x)^{n+2}-\varphi(x)^{n}}$, where $\varphi(x):=\frac{x+\sqrt{x^2-4}}{2}.$}
\]
\end{lem}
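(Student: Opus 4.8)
The plan is to prove the two assertions in turn: first the three-term recurrence, then the closed form it generates.

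For the recurrence, I would apply Lemma~\ref{lem2.3} with $u$ taken to be an endpoint of $P_n$ (for $n\ge 2$). Since $P_n$ is a forest it contains no cycles, so the two cycle-sums $\sum_{\widetilde{C}\in\mathscr{C}^+(u)}$ and $\sum_{\widetilde{C}\in\mathscr{C}^-(u)}$ are both empty. The endpoint $u$ has a single neighbour $v$, so the sum over $v\sim u$ has one term; deleting $u$ leaves $P_{n-1}$ and deleting both $u$ and $v$ leaves $P_{n-2}$, which yields $\Phi(P_n,x)=x\Phi(P_{n-1},x)-\Phi(P_{n-2},x)$. The base cases $\Phi(P_0,x)=1$ and $\Phi(P_1,x)=x$ are immediate from $\Phi(M,x)=\det(xI-H(M))$ (the empty determinant, and the $1\times 1$ matrix $(x)$, respectively). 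One may alternatively note that by Lemma~\ref{lem2.5} every mixed graph on a path is switching isomorphic to $P_n$, so it suffices to treat the ordinary tridiagonal matrix, whose cofactor expansion along the last row gives the same recurrence directly.

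For the closed form, the characteristic equation of the recurrence is $t^2-xt+1=0$, whose two roots $\varphi(x)$ and $\varphi(x)^{-1}$ have product $1$ and sum $x$, in agreement with $\varphi(x)=\tfrac{x+\sqrt{x^2-4}}{2}$. I would write the general solution as $\Phi(P_n,x)=A\varphi^{n}+B\varphi^{-n}$ and fix $A,B$ from the initial values; more cleanly, it suffices to verify that the candidate expression satisfies both the recurrence and the two initial conditions, since the solution of a second-order linear recurrence is unique. The key algebraic observation is that the stated quotient simplifies: factoring $\varphi^{2n+2}-1=\varphi^{n+1}\bigl(\varphi^{n+1}-\varphi^{-(n+1)}\bigr)$ and $\varphi^{n+2}-\varphi^{n}=\varphi^{n+1}\bigl(\varphi-\varphi^{-1}\bigr)$ reduces it to the symmetric form $\dfrac{\varphi^{n+1}-\varphi^{-(n+1)}}{\varphi-\varphi^{-1}}$, on which both checks are routine (e.g.\ $n=0$ gives $1$, and $n=1$ gives $\varphi+\varphi^{-1}=x$).

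The one point I would be careful about — the only genuine subtlety here — is the degenerate case $x=\pm 2$, where $\varphi=\varphi^{-1}$ and the closed form becomes a $0/0$ expression. Since each side is a polynomial in $x$, it is enough to observe that the identity holds for all $x$ with $x^2\ne 4$, whence it holds identically by continuity, the right-hand side extending to a polynomial with removable singularities at $x=\pm 2$. This polynomial-identity remark dispenses with any need to treat $x=\pm 2$ by a separate limiting computation, and I expect no deeper obstacle, as the statement is essentially the standard identification of $\Phi(P_n,x)$ with a Chebyshev-type polynomial.
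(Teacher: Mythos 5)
Your proof is correct and complete: the deletion argument via Lemma~\ref{lem2.3} at an endpoint (with empty cycle sums) gives the recurrence, the verification of the closed form via uniqueness of solutions to a second-order linear recurrence is sound, and your simplification of $\frac{\varphi(x)^{2n+2}-1}{\varphi(x)^{n+2}-\varphi(x)^{n}}$ to $\frac{\varphi(x)^{n+1}-\varphi(x)^{-(n+1)}}{\varphi(x)-\varphi(x)^{-1}}$ together with the removable-singularity remark at $x=\pm 2$ is exactly the right way to handle the only delicate point. The paper itself gives no proof — it quotes the lemma from \cite{WDHL2023} — so there is nothing to compare against; your argument is the standard one, and in fact your observation via Lemma~\ref{lem2.5} that any mixed path reduces to the ordinary tridiagonal case is a nice touch that justifies using the simple-graph recurrence in the mixed-graph setting.
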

\begin{lem}\label{lem2.04}
If $M$ is a mixed graph, then $\sqrt{\Delta(M)}\leq\rho(M)\leq \Delta(M).$
\end{lem}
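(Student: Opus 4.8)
The plan is to handle the two inequalities separately, exploiting that $H(M)$ is Hermitian with all off-diagonal entries of modulus exactly $1$ on adjacent pairs and $0$ otherwise. Both bounds then reduce to elementary facts about Hermitian matrices, so no heavy machinery is needed.

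For the upper bound $\rho(M)\le\Delta(M)$, I would argue directly with an eigenvector. Let $\lambda$ be an eigenvalue of $H(M)$ with $|\lambda|=\rho(M)$ and let $x$ be a corresponding eigenvector. Pick a coordinate $u$ maximizing $|x_u|$ (which is nonzero). Reading off the $u$-th row of $H(M)x=\lambda x$ gives $\lambda x_u=\sum_{v\sim u}h_{uv}x_v$, so by the triangle inequality $|\lambda|\,|x_u|\le\sum_{v\sim u}|h_{uv}|\,|x_v|\le\sum_{v\sim u}|x_u|=d_M(u)|x_u|\le\Delta(M)|x_u|$, where I used $|h_{uv}|=1$ for each neighbour $v$. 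Dividing by $|x_u|$ yields $\rho(M)\le\Delta(M)$. (Equivalently, one may invoke that $\rho$ is bounded by the maximum absolute row sum of $H(M)$, i.e.\ the Gershgorin/$\ell_\infty$-norm bound, which here equals $\Delta(M)$.)

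For the lower bound $\sqrt{\Delta(M)}\le\rho(M)$, the idea is to pass to the positive semidefinite matrix $H(M)^2$. The key computation is that, because $H(M)$ is Hermitian, $h_{vu}=\overline{h_{uv}}$, and therefore the $(u,u)$-entry of $H(M)^2$ equals $\sum_{v}h_{uv}h_{vu}=\sum_{v}|h_{uv}|^2=d_M(u)$, since each neighbour contributes $|h_{uv}|^2=1$. Now the eigenvalues of $H(M)^2$ are exactly $\lambda_i(M)^2$, so its largest eigenvalue is $\rho(M)^2$. Testing the Rayleigh quotient of the Hermitian matrix $H(M)^2$ against the standard basis vector $e_u$ gives $\rho(M)^2=\lambda_{\max}\!\big(H(M)^2\big)\ge e_u^{*}H(M)^2 e_u=d_M(u)$ for every $u$; choosing $u$ of maximum degree yields $\rho(M)^2\ge\Delta(M)$, hence $\rho(M)\ge\sqrt{\Delta(M)}$.

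I do not expect a genuine obstacle here, as both halves are short. The only point requiring care is the diagonal identity for $H(M)^2$: one must use the Hermitian symmetry $h_{vu}=\overline{h_{uv}}$ so that the product $h_{uv}h_{vu}=|h_{uv}|^2$ is real and nonnegative, regardless of whether the adjacency comes from an arc (entry $\pm\mathbf{i}$) or an undirected edge (entry $1$). Once that is in place, the positive semidefiniteness of $H(M)^2$ and the diagonal-entry lower bound on $\lambda_{\max}$ close the argument cleanly, mirroring the classical bound $\sqrt{\Delta(G)}\le\rho(A(G))\le\Delta(G)$ for the adjacency matrix of a simple graph.
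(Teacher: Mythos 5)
Your proof is correct and matches the paper's argument in essence: the lower bound via the Rayleigh quotient of $H(M)^2$ at a standard basis vector supported on a maximum-degree vertex, using $(H(M)^2)_{uu}=\sum_{v\sim u}|h_{uv}|^2=d_M(u)$, is exactly the paper's computation. The only difference is cosmetic: for the upper bound the paper simply cites \cite[Theorem 5.1]{GM2017}, whereas you spell out the standard maximum-modulus-coordinate (Gershgorin-type) eigenvector argument that underlies that citation, and in fact your chain $\rho(M)^2=\lambda_{\max}\big(H(M)^2\big)\geq e_u^{*}H(M)^2e_u$ is slightly cleaner than the paper's intermediate step through $\lambda_1^2(M)$.
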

\begin{proof}
The upper bound for $\rho(M)$ follows by \cite[Theorem 5.1]{GM2017}. To obtain the lower bound, let $V(M)=\{v_1,\ldots,v_n\}$ with $d_M(v_1)=\Delta(M)$, and let ${\bf e}$ be {the} vector of length $n$ with ${\bf e}_1=1$ and ${\bf e}_i=0$ for $2\leq i\leq n.$ Then ${\bf e}^TH^2(M){\bf e}=d_M(v_1)=\Delta(M).$ On the other hand, by the min–max formula (see \cite[Equation (1)]{GM2017}), $\rho^2(M)\geq\lambda_1^2(M)\geq {\bf e}^TH^2(M){\bf e}.$ Thus $\rho(M)\geq \sqrt{\Delta(M)}.$  
\end{proof}

Assume that $\lambda_1\geq\lambda_2\geq\cdots\geq\lambda_n$ and $\mu_1\geq\mu_2\geq\cdots\geq\mu_{n-t}$ (where $t\geq1$) are two sequences of real numbers. We say that the sequence $\mu_i\, (1\leq i\leq n-t)$ \textit{interlaces} the sequence $\lambda_j\, (1\leq j\leq n)$ if for every $s=1,\ldots,n-t$, we have
$
\lambda_s\geq\mu_s\geq\lambda_{s+t}.
$
\begin{lem}[\cite{GM2017}]\label{lem2.1}
Let $M$ be a mixed graph, and let $M'$ be an induced subgraph of $M.$ Then the eigenvalues of $M'$ interlace the eigenvalues of $M$.
\end{lem}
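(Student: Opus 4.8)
The plan is to reduce the statement to the classical Cauchy interlacing theorem for Hermitian matrices. The essential observation is that because $M'$ is an \emph{induced} subgraph of $M$, the matrix $H(M')$ is obtained from $H(M)$ by deleting exactly the $t$ rows and columns indexed by the vertices of $V(M)\setminus V(M')$; that is, $H(M')$ is a principal submatrix of the Hermitian matrix $H(M)$. Thus it suffices to prove that for a Hermitian matrix $H$ of order $n$ with eigenvalues $\lambda_1\geq\cdots\geq\lambda_n$, any principal submatrix $B$ of order $n-t$ with eigenvalues $\mu_1\geq\cdots\geq\mu_{n-t}$ satisfies $\lambda_s\geq\mu_s\geq\lambda_{s+t}$ for all $s$.

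First I would set up the Courant--Fischer (min--max) characterization of eigenvalues, which is exactly the min--max formula already invoked in the proof of Lemma~\ref{lem2.04}. Writing $U\subseteq\mathbb{C}^n$ for the coordinate subspace spanned by the standard basis vectors indexed by $V(M')$ (so $\dim U=n-t$), the key computational fact is that for every $x\in U$ with restriction $y$ to the retained coordinates one has $x^{*}H(M)x=y^{*}H(M')y$ and $x^{*}x=y^{*}y$, since $x$ vanishes on the deleted coordinates and so the off-diagonal blocks contribute nothing. This identifies the Rayleigh quotients of $H(M')$ on $\mathbb{C}^{n-t}$ with those of $H(M)$ restricted to $U$.

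The upper bound $\mu_s\leq\lambda_s$ is then immediate: under this identification the max--min formula for $\mu_s$ ranges over the $s$-dimensional subspaces of $U$, whereas that for $\lambda_s$ ranges over all $s$-dimensional subspaces of $\mathbb{C}^n$, so maximizing over the larger family can only increase the value. For the lower bound $\mu_s\geq\lambda_{s+t}$ I would take an $(s+t)$-dimensional subspace $V$ attaining $\lambda_{s+t}$ in the max--min formula and intersect it with $U$. A dimension count gives $\dim(V\cap U)\geq (s+t)+(n-t)-n=s$; every nonzero $x\in V\cap U$ lies in $V$, so its Rayleigh quotient is at least $\lambda_{s+t}$, and it lies in $U$, so this quotient equals a Rayleigh quotient of $H(M')$. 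Feeding an $s$-dimensional subspace of $V\cap U$ into the max--min formula for $\mu_s$ then yields $\mu_s\geq\lambda_{s+t}$.

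I expect the only genuinely delicate point to be the lower bound, where the intersection-and-dimension-count step must be carried out correctly; the identification of Rayleigh quotients and the upper bound are routine once the principal-submatrix structure is recognized. An entirely equivalent and shorter route would be simply to cite the Cauchy interlacing theorem, since the reduction to a Hermitian matrix and one of its principal submatrices is the whole content of the argument.
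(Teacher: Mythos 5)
Your proof is correct: since $M'$ is an \emph{induced} subgraph of $M$ (so arc directions and adjacencies are inherited exactly), $H(M')$ is indeed a principal submatrix of the Hermitian matrix $H(M)$, and your Courant--Fischer argument, including the dimension count $\dim(V\cap U)\geq s$ for the lower bound, is the standard proof of Cauchy interlacing. The paper offers no proof of its own here --- it simply cites the lemma from Guo and Mohar \cite{GM2017} --- and your argument is precisely the classical one underlying that citation, so the two routes coincide.
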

\begin{lem}\label{lem2.05}
Let $(M_k)_{k\in \mathbb{N}}$ be a sequence of connected mixed graphs such that 
$$
\text{$\rho(M_i)\neq \rho(M_j)$ whenever $i\neq j,$ and $\lim_{k\rightarrow\infty}\rho(M_k)=\gamma$}
$$
for some constant $\gamma$. Then $\lim_{k\rightarrow\infty}\diam(\Gamma(M_k))=+\infty.$
\end{lem}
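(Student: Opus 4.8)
The plan is to argue by contradiction, exploiting the chain of implications: a bound on the spectral radius forces a bound on the maximum degree (via Lemma~\ref{lem2.04}), and a bounded maximum degree together with a bounded diameter forces a bounded order. Since there are only finitely many mixed graphs on a bounded number of vertices, there can be only finitely many possible spectral radii, which will contradict the hypothesis that the $\rho(M_k)$ are pairwise distinct.

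First I would record that, since $\lim_{k\to\infty}\rho(M_k)=\gamma$, the sequence $(\rho(M_k))_{k\in\mathbb{N}}$ is bounded: there is a constant $B$ with $\rho(M_k)\le B$ for all $k$. Applying Lemma~\ref{lem2.04} gives $\sqrt{\Delta(M_k)}\le\rho(M_k)\le B$, so $\Delta(M_k)\le B^2$ for every $k$; that is, the maximum degrees are uniformly bounded. Note that the degree of a vertex in $M_k$ equals its degree in $\Gamma(M_k)$, so this is simultaneously a bound on the maximum degree of the underlying graphs.

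Next, suppose toward a contradiction that $\diam(\Gamma(M_k))$ does not tend to $+\infty$. Then some subsequence $(M_{k_j})_{j\in\mathbb{N}}$ satisfies $\diam(\Gamma(M_{k_j}))\le D$ for a fixed constant $D$. Since each $M_{k_j}$ is connected, I would fix a vertex $v$; every vertex of $\Gamma(M_{k_j})$ lies within distance $D$ of $v$, and a standard breadth-first count shows that the number of vertices at distance $i\ge1$ from $v$ is at most $\Delta(M_{k_j})(\Delta(M_{k_j})-1)^{i-1}\le B^2(B^2-1)^{i-1}$. Summing over $i=0,1,\dots,D$ bounds the order $|V(M_{k_j})|$ by a constant $N=N(B,D)$ independent of $j$.

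Finally, there are only finitely many mixed graphs (up to isomorphism) on at most $N$ vertices, because each of the finitely many pairs of vertices can carry only one of four possibilities: no edge, an undirected edge, or an arc in either direction. Consequently the set $\{\rho(M_{k_j}):j\in\mathbb{N}\}$ is finite. But by hypothesis the values $\rho(M_{k_j})$ are pairwise distinct, so this set is infinite, a contradiction. Hence no bounded-diameter subsequence exists, and therefore $\lim_{k\to\infty}\diam(\Gamma(M_k))=+\infty$. The only point requiring care is the logical bookkeeping in negating the limit statement and extracting the bounded-diameter subsequence; the degree estimate and the counting bound are both routine.
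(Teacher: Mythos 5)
Your proof is correct and follows essentially the same route as the paper, which simply invokes Lemma~\ref{lem2.04} and cites the proof of Proposition~2.6 in \cite{BB2024} --- namely, the bound $\sqrt{\Delta(M_k)}\le\rho(M_k)$ turns a convergent (hence bounded) sequence of spectral radii into a uniform degree bound, and a bounded-diameter subsequence would then have bounded order, leaving only finitely many mixed graphs and hence finitely many spectral radii, contradicting their pairwise distinctness. You have merely written out explicitly the argument the paper references, so no further comparison is needed.
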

\begin{proof}
By Lemma \ref{lem2.04}, $\sqrt{\Delta(M_k)}\leq\rho(M_k)$ for each $k\in \mathbb{N}.$ By this inequality and the convergence of $(\rho(M_k))_{k\in \mathbb{N}}$, we know that there is a constant $c$ such that $\sqrt{\Delta(M_k)}\leq c$ for each $k\in \mathbb{N},$ and so $\Delta(\Gamma(M_k))=\Delta(M_k)\leq c^2.$ Since $M_k\,(k\in \mathbb{N})$ are pairwise distinct, $\lim_{k\rightarrow\infty}|V(\Gamma(M_k))|=+\infty.$ Now by the well-known inequality
$$
|V(\Gamma(M_k))|\leq \Delta(\Gamma(M_k))^{\diam(\Gamma(M_k))}+1\leq c^{2\diam(\Gamma(M_k))}+1,
$$
one has $\lim_{k\rightarrow\infty}\diam(\Gamma(M_k))=+\infty.$ 
%the proof is the same as the proof of \cite[Proposition 2.6]{BB2024}.  
\end{proof}

{Finally, we extend Hoffman's classical tool to compute the limit of the spectral radius of a sequence of graphs obtained from a graph $G$ by attaching a path of strictly increasing length to a fixed $v\in V (G),$ see \cite[Lemma 3.4]{H1972}. By the same discussion as the proof of \cite[Lemma 3.4]{H1972}, one may obtain the following result.
\begin{lem}\label{lem2.07}
Let $v$ be a vertex of a mixed graph $M,$ and let $M_n$ be the mixed graph obtained by attaching the path $P_{n+1}$ to $v$. If $\lim_{n\rightarrow\infty}\rho (M_n)=\lim_{n\rightarrow\infty}\lambda_1(M_n)>2,$ then $\lim_{n\rightarrow\infty}\rho (M_n)$ is the largest positive root of the function
$$
\text{$\varphi(x)\Phi(M, x)-\Phi(M-v, x),$ where $\varphi(x)=\frac{x+\sqrt{x^2-4}}{2}.$}
$$
\end{lem}
}

\section{\normalsize Mixed graphs without negative $4$-cycle whose spectral radius is at most $\sqrt{2+\sqrt{5}}$}\setcounter{equation}{0}
In this section, we are to characterize all mixed graphs (up to switching isomorphism) without negative $4$-cycle whose spectral radius is at most $\rho^\ast:=\sqrt{2+\sqrt{5}}\approx2.05817$. For convenience, if a mixed graph $M'$ is switching isomorphic to an induced subgraph of $M$, then we say $M'$ is an induced subgraph of $M$, or that $M$ contains $M'$; if a mixed graph $M'$ is not a subgraph of $M$, then we say $M$ is $M'$-\textit{free}. 
\begin{figure}
\begin{center}
\psfrag{1}{$C_4$}\psfrag{2}{$C_4'$}\psfrag{3}{$C_4''$}
\includegraphics[width=100mm]{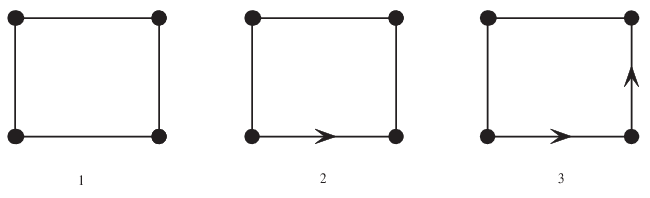} \\
  \caption{Mixed graphs $C_4,\,C_4'$ and $C_4''$.}\label{fig0}
\end{center}
\end{figure}

In order to present our main result, we need the definitions for some classes of mixed graphs. Let $T_{a,b,c}$ be the tree of order $a+b+c+1$ consisting of three paths of orders $a+1,\,b+1$ and $c+1$, respectively, where these paths have one end vertex in common. Let $Q_{a,b,c}$ be the tree of order $a+b+c+3$ consisting of a path $v_1v_2\cdots v_{a+b+c+1}$ with two extra edges affixed at $v_{a+1}$ and $v_{a+b+1}$, respectively.

We denote by $C_n,\, C_n'$ and $C_n''$ the $n$-vertex mixed cycles having no arc, just one arc and just two consecutive arcs with the same direction, respectively; mixed graphs $C_4,\,C_4'$ and $C_4''$ are depicted in Fig.~\ref{fig0}. Then by Lemma~\ref{lem2.4}, all positive, imaginary and negative mixed cycles of order $n$ are switching isomorphic to $C_n,\, C_n'$ and $C_n''$, respectively.

Given two positive integers $n>k\geq3,$ denote by $C_{k,n},\, C_{k,n}'$ and $C_{k,n}''$ the mixed graphs obtained by affixing a path of order $n-k+1$ at a vertex in $C_k,\, C_k'$ and $C_k''$, respectively.

Let $C_n,\, C_n'$ and $C_n''$ be mixed cycles with underlying graph $v_1v_2\cdots v_nv_1,$ then for $2\leq k\leq n,$ the mixed graphs $G_{n,k},\,G_{n,k}'$ and $G_{n,k}''$ are obtained from $C_n,\, C_n'$ and $C_n''$, respectively, by affixing an edge at $v_1$ and an edge at $v_k$; for $1\leq k\leq n-7,$ the mixed graph $U''_{k,n-6-k}$ is obtained from $C_6''$ by affixing a path of order $k+1$ at $v_1$ and a path of order $n-5-k$ at $v_4$; the mixed graph $U''_6$ is obtained from $C_6''$ by affixing a path of order $3$ at $v_1$, an edge at $v_3$ and an edge at $v_5$; the mixed graph $U''_8$ is obtained from $C_8''$ by affixing a path of order $3$ at $v_1$ and a path of order $3$ at $v_5$. 

Given positive integers $c\geq a,$ define
$$
b^\ast(a,c)=\left\{
              \begin{array}{ll}
                a+c+2, & \text{for $a\geq3;$} \\
                c+3, & \text{for $a=2;$} \\
                c, & \text{for $a=1.$}
              \end{array}
            \right.
$$
Then the main result of this section is:
\begin{figure}
\begin{center}
\psfrag{2}{$M^\ast; 2$}
\includegraphics[width=40mm]{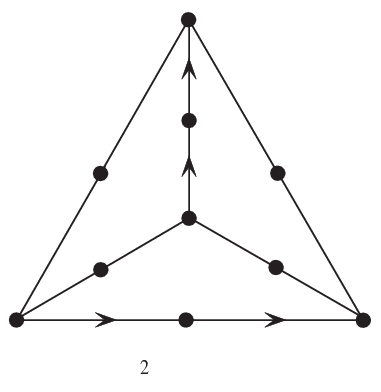} \\
  \caption{Mixed graph $M^\ast$ together with its spectral radius.}\label{fig01}
\end{center}
\end{figure}
\begin{thm}\label{thm3.5}
Let $M$ be a connected $C_4''$-free mixed graph of order $n$. Then $\rho(M)\leq \sqrt{2+\sqrt{5}}$ if and only if $M$ is an induced subgraph of one of the following mixed graphs:
\begin{wst}
\item[{\rm (i)}]$K_{1,4}$, $Q_{a,b,c}$ for $(a,b,c)\in\{(1,1,2),(2,4,2),(2,5,3),(3,7,3),(3,8,4)\};$ or $c\geq a>0$ and $b\geq b^\ast(a,c)$;
\item[{\rm (ii)}]$C_n$ for $n\geq3;$
\item[{\rm (iii)}]$C_n''$ for odd $n\geq3,$ $G''_{n-2,\frac{n}{2}}$ for even $n\geq10$, $U''_{k,n-6-k}$ for $1\leq k\leq n-7,$ $G''_{8,4},\,G''_{10,5},\,U''_6,\,U''_8;$
\item[{\rm (iv)}]$C'_{n-1,n}$ for $n\geq6,$ $C'_{3,n}$ for $n\geq4,$ $C'_{4,6}$.
\item[{\rm (v)}]$M^\ast,$ where $M^\ast$ is depicted in Fig.~\ref{fig01}.
\end{wst}
\end{thm}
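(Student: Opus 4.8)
The plan is to prove this characterization in two directions, treating the "if" direction first as a verification, and reserving the bulk of the work for the "only if" direction via a structural case analysis.

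For the "if" direction, I would verify that each listed mixed graph has spectral radius at most $\rho^\ast=\sqrt{2+\sqrt{5}}$, and that this property is inherited by induced subgraphs (which is immediate from the interlacing Lemma~\ref{lem2.1}). For the families $Q_{a,b,c}$ and the paths-with-pendants, the underlying graphs are trees, so by Lemma~\ref{lem2.5} they are switching isomorphic to the trees themselves, and the spectral radius equals the adjacency spectral radius of the tree; here I can invoke the known simple-graph results of Hoffman~\cite{H1972} directly. For the infinite families $C_n$, $C_n''$, $C'_{k,n}$, $G''_{n,k}$, and $U''_{k,n-6-k}$, I would compute $\Phi(M,x)$ using the recurrences in Lemma~\ref{lem2.3} and Corollary~\ref{cor2.03}, then show $\Phi(M,\rho^\ast)\geq 0$ together with a sign/monotonicity argument to conclude $\rho(M)\leq\rho^\ast$. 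The single graph $M^\ast$ is handled by a direct eigenvalue computation (its radius is exactly $2$, as indicated in Fig.~\ref{fig01}).

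For the "only if" direction, the plan is a forbidden-subgraph analysis. First I would identify a finite list of minimal $C_4''$-free mixed graphs whose spectral radius strictly exceeds $\rho^\ast$; any $M$ with $\rho(M)\leq\rho^\ast$ must avoid all of these as induced subgraphs. The degree bound $\sqrt{\Delta(M)}\leq\rho(M)$ from Lemma~\ref{lem2.04} immediately forces $\Delta(M)\leq 4$, and in fact a vertex of degree $4$ is highly constrained, isolating the star $K_{1,4}$ and $M^\ast$ as special cases. I would then split on the girth $g(M)$ and on whether $M$ is a tree, contains a unique cycle (unicyclic), or is more complex. Since $M$ is $C_4''$-free (no negative $4$-cycle up to switching), Lemma~\ref{lem2.06} lets me pass between mixed graphs with no imaginary cycle and signed graphs, so the tree case reduces to Hoffman's classification and the real-cycle cases reduce to the signed-graph results of Wang et al.~\cite{WDHL2023}; the genuinely new work concerns mixed graphs containing an imaginary cycle ($C_n'$ or $C_n''$). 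Within the unicyclic imaginary case I would analyze how pendant paths and pendant edges may be attached to the cycle without creating a forbidden configuration, which produces the families in (iii) and (iv).

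The main obstacle will be the exhaustive case analysis in the "only if" direction for bicyclic and theta-like mixed graphs containing imaginary cycles, where the interplay between cycle weights (governed by Lemma~\ref{lem2.4}) and the $C_4''$-free restriction creates many subcases. Controlling these requires carefully assembling the forbidden-subgraph list so that it is both necessary and sufficient to pin down exactly the graphs $G''_{8,4}$, $G''_{10,5}$, $U''_6$, $U''_8$ and the threshold function $b^\ast(a,c)$; the delicate part is showing that the sporadic graphs and the precise boundaries of the infinite families are forced, rather than merely consistent. I expect to need sharp characteristic-polynomial estimates near $x=\rho^\ast$ (via Lemma~\ref{lem2.6}, using the closed form for $\Phi(P_n,x)$) to rule out the borderline graphs just outside each family.
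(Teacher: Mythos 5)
Your proposal is correct and follows essentially the same route as the paper: decompose by number of cycles (tree/unicyclic/at least two cycles), settle trees via Lemma~\ref{lem2.5} and Proposition~\ref{prop3.1}, reduce real-cycle graphs to signed graphs via Lemma~\ref{lem2.06} and the results of Wang et al.\ \cite{WDHL2023}, and dispatch the genuinely new imaginary-cycle configurations by a girth-based forbidden-subgraph analysis combined with characteristic-polynomial evaluations at $\rho^\ast$ using the closed form \eqref{eq:2.3} --- exactly the architecture of the paper's Propositions~\ref{prop3.3} and \ref{prop3.4}. One small correction: $C_n''$ is a \emph{negative} (hence real) cycle, so it belongs to the signed-graph reduction rather than the imaginary case --- only $C_n'$ is imaginary --- and accordingly the sporadic graphs $G''_{8,4},\,G''_{10,5},\,U''_6,\,U''_8$ and the threshold $b^\ast(a,c)$ are imported from the signed-graph and simple-graph classifications, not forced by your imaginary-cycle analysis.
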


In order to prove this result, we will distinguish the mixed graphs according to the number of cycles.
\subsection{\normalsize Mixed trees}
Smith \cite{S1970} characterized all simple graphs whose spectral radius does not exceed $2;$ Brouwer, Neumaier \cite{BN1989} and Cvetkovi\'c, Doob, Gutman \cite{CDG1989} determined all simple graphs with spectral radius between $2$ and $\rho^\ast.$
\begin{prop}[\cite{BN1989,CDG1989,S1970}]\label{prop3.1}
Let $G$ be a connected simple graph of order $n$. Then $\rho(G)\leq \sqrt{2+\sqrt{5}}$ if and only if $G$ is isomorphic to one of the graphs in $\{C_n, K_{1,4},P_n\}\cup \{T_{a,b,c}| a=1,\,b\geq c\geq1;$ or $a=b=2,\, c\geq2;$ or $a=2,\,b=c=3\}\cup\big\{Q_{a,b,c}\big|(a,b,c)\in\{(1,1,2),(2,4,2),(2,5,3),(3,7,3),(3,8,4)\};$ or $c\geq a>0,\,b\geq b^\ast(a,c)\big\}$.
\end{prop}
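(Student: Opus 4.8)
The plan is to prove both directions by reducing everything to an analysis of how the spectral radius depends on the lengths of pendant paths, using eigenvalue interlacing (Lemma~\ref{lem2.1}) to pass between a graph and its subgraphs, and Smith's classification to dispose of the range $\rho\le 2$. By interlacing, $\rho(G)\le\rho^\ast$ is equivalent to $G$ being free of connected subgraphs with $\rho>\rho^\ast$, so the whole problem is a forbidden-subgraph characterization. The central computational device I would set up first is the \emph{arm impedance}: if a pendant path with $\ell$ edges is attached at a vertex $v$, then in any eigenvector for an eigenvalue $\lambda>2$ the ratio of the value at the first path vertex to the value at $v$ equals $g_\ell(\lambda):=\Phi(P_{\ell-1},\lambda)/\Phi(P_\ell,\lambda)$. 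Using the closed form in Lemma~\ref{lem2.6} one checks that for fixed $\lambda>2$, $g_\ell(\lambda)$ is strictly increasing in $\ell$ with limit the decaying ratio $t(\lambda)=\varphi(\lambda)^{-1}=(\lambda-\sqrt{\lambda^2-4})/2$. The eigenvalue equation at a vertex $w$ of degree $d$ whose arms are pendant paths then reads $\lambda=\sum_{j=1}^{d} g_{\ell_j}(\lambda)$, and since each summand increases with its arm length, lengthening an arm raises $\rho$; hence every threshold is detected at the limiting value $t(\lambda)$.

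For necessity I would first bound the maximum degree. As $K_{1,5}$ has spectral radius $\sqrt5>\rho^\ast$, Lemma~\ref{lem2.04} (or interlacing with a star) forces $\Delta(G)\le 4$. If $\Delta(G)=4$ and $G\ne K_{1,4}$, then $G$ contains the spider with arm lengths $1,1,1,2$; its center equation $\lambda=3/\lambda+\lambda/(\lambda^2-1)$ has its largest root above $\rho^\ast$, so this spider is forbidden and $\Delta=4$ leaves only $K_{1,4}$. Next I would show that a connected graph containing a cycle with $\rho\le\rho^\ast$ must be a bare cycle $C_n$: any other such graph contains a \emph{pan} (a cycle with one pendant edge), and the corresponding threshold equation shows every pan has $\rho>\rho^\ast$ (the paw already has $\rho\approx2.17$, and the pan thresholds decrease to $\rho^\ast$ from above as the cycle grows). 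This removes all cyclic graphs except the cycles and all $\Delta=4$ graphs except $K_{1,4}$, leaving only trees of maximum degree at most $3$; those with $\Delta\le2$ are exactly the paths, with $\rho<2$.

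The heart of the proof is trees with $\Delta=3$, organized by the number of degree-$3$ (branch) vertices. With a single branch vertex $G=T_{a,b,c}$, the center equation $\lambda=g_a+g_b+g_c$ together with the monotonicity of $g_\ell$ pins down the admissible triples: evaluating at $\lambda=\rho^\ast$ and using $g_\ell\uparrow t$, one finds that $T_{1,\infty,\infty}$ and $T_{2,2,\infty}$ both have threshold exactly $\rho^\ast$ (each reducing to $\lambda^4-4\lambda^2-1=0$), whence all $T_{1,b,c}$ and all $T_{2,2,c}$ satisfy $\rho<\rho^\ast$; a direct check then shows $T_{2,3,3}$ is still below while $T_{2,3,4}$ and every $T_{a,b,c}$ with $a\ge3$ already exceed $\rho^\ast$. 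This reproduces exactly the $T_{a,b,c}$ entries of the list. With two branch vertices the tree is a double spider, and I would first argue that each branch vertex may carry only a single pendant besides the through-path (a longer side arm creates a $T_{2,3,\cdot}$-type local configuration pushing $\rho$ over $\rho^\ast$), which forces the $Q_{a,b,c}$ shape, and then analyze $Q_{a,b,c}$ by coupling the two end equations through the middle path of length $b$.

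The hard part will be this last analysis and its matching upper bounds. As $b\to\infty$ the two forks decouple and $\rho(Q_{a,b,c})$ tends to the maximum of the two single-fork thresholds for $T_{1,a,\infty}$ and $T_{1,c,\infty}$, each strictly below $\rho^\ast$; since the convergence is monotone from above, for each $(a,c)$ there is a least $b$ with $\rho\le\rho^\ast$, and the genuine labor is to evaluate the coupled characteristic polynomial (via the deletion recurrence of Lemma~\ref{lem2.3}, which on trees reduces to the path recurrence of Lemma~\ref{lem2.6}) at $\lambda=\rho^\ast$ sharply enough to identify this threshold as the stated $b^\ast(a,c)$ and to isolate the finitely many sporadic small-$b$ exceptions $(1,1,2),(2,4,2),(2,5,3),(3,7,3),(3,8,4)$. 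The same threshold computations, read in the reverse direction, yield sufficiency: every graph in the list either has $\rho\le2$ by Smith's theorem \cite{S1970}, or has its spectral radius bounded above by the limiting value $\rho^\ast$ of the corresponding boundary family ($T_{1,\infty,\infty}$ or $T_{2,2,\infty}$), or, in the $Q$ case, by its decoupled limit which is strictly below $\rho^\ast$. I would finally rule out three or more branch vertices by exhibiting, for each way a third fork can attach, a small forbidden subtree with $\rho>\rho^\ast$, which completes the characterization.
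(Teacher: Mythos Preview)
The paper does not prove Proposition~\ref{prop3.1} at all; it is quoted from \cite{S1970,BN1989,CDG1989} as a known classification and used as a black box throughout Section~3. So there is no ``paper's own proof'' to compare against, and your proposal is really a sketch of the classical argument from those references.

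Your outline is the standard route and is sound in broad strokes, but one step has a genuine gap. In the reduction of two--branch trees to the $Q_{a,b,c}$ shape you assert that a side arm of length $\ge 2$ at a branch vertex ``creates a $T_{2,3,\cdot}$-type local configuration pushing $\rho$ over $\rho^\ast$''. This fails precisely when both external arms at that branch vertex have length exactly $2$: the local spider is then $T_{2,2,m}$, which is admissible for every $m$, so no forbidden $T$-subtree is available. Concretely, take the double spider with arms $(2,2)$ at $x$, arms $(1,1)$ at $y$, and $x\sim y$; one computes $\rho=\sqrt{3+\sqrt{2}}\approx 2.101>\rho^\ast$, yet every proper subtree lies in the admissible list (deleting one arm-tip at $x$ yields $Q_{1,1,2}$, deleting a pendant at $y$ yields $T_{2,2,2}$). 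Thus this tree is a minimal forbidden configuration and must be excluded by a direct computation, not by a subtree argument. The same issue forces a handful of further finite checks when one external arm has length $2$ and the through-path is short. None of this breaks your plan, but the justification you give for that step is incorrect as stated.

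A minor terminological point: you invoke interlacing (Lemma~\ref{lem2.1}) to pass to subgraphs, but interlacing controls only induced subgraphs. For simple graphs the cleaner tool is Perron--Frobenius monotonicity of $\rho$ under arbitrary (not necessarily induced) subgraphs; this is what makes the pan argument go through without worrying about chords. Finally, the exclusion of three or more branch vertices is only asserted, and (as the example above shows) it too cannot be done purely by exhibiting a forbidden $T_{a,b,c}$; some direct evaluations of $\rho$ on small caterpillars are unavoidable.
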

It follows from Lemma~\ref{lem2.5} that Proposition~\ref{prop3.1} provides all mixed trees (up to switching isomorphism) whose spectral radius is at most $\rho^\ast$. Each of these is an induced subgraph of one of the mixed graphs in (i) and (iii) of Theorem~\ref{thm3.5}.

\subsection{\normalsize Unicyclic mixed graphs}
A mixed graph is called \textit{unicyclic} if it is connected and contains exactly one mixed cycle.
\begin{lem}\label{lem3.01}
Let $n\geq4.$ Then $\rho(C'_{n-1,n})< \sqrt{2+\sqrt{5}}.$
\end{lem}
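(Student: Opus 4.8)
The plan is to compute $\Phi(M,x)$ for $M:=C'_{n-1,n}$ in closed form and show it is strictly positive on $[\rho^\ast,\infty)$. First I would note that $M$ is the imaginary cycle $C_{n-1}'$ with a single pendant vertex $u$ attached at a cycle vertex $w$; its unique mixed cycle is imaginary, so $M$ has no real mixed odd cycle, and by Lemma~\ref{lem2.02} its spectrum is symmetric about $0$, whence $\rho(M)=\lambda_1(M)$. Since $\Phi(M,\cdot)$ is monic of degree $n$, it therefore suffices to prove $\Phi(M,x)>0$ for every $x\ge\rho^\ast$: this rules out any eigenvalue $\ge\rho^\ast$, giving $\lambda_1(M)<\rho^\ast$ and hence $\rho(M)<\rho^\ast$.

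Next I would apply Lemma~\ref{lem2.3} to the pendant vertex $u$, which lies on no cycle, so that $\Phi(M,x)=x\,\Phi(C_{n-1}',x)-\Phi(P_{n-2},x)$, using Lemma~\ref{lem2.5} to identify the char.\ polynomial of $C_{n-1}'-w$ with that of $P_{n-2}$. For $x>2$ I set $\varphi=\varphi(x)>1$ so that $x=\varphi+\varphi^{-1}$; Lemma~\ref{lem2.6} gives $\Phi(P_m,x)=(\varphi^{m+1}-\varphi^{-(m+1)})/(\varphi-\varphi^{-1})$, and feeding this into Corollary~\ref{cor2.03} collapses to the clean identity $\Phi(C_{n-1}',x)=\varphi^{n-1}+\varphi^{-(n-1)}$. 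Substituting and clearing the positive denominator $\varphi-\varphi^{-1}$, a short telescoping computation should yield $(\varphi-\varphi^{-1})\Phi(M,x)=\varphi^{n-3}(\varphi^4-\varphi^2-1)+\varphi^{-(n+1)}(\varphi^4+\varphi^2-1)$.

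The decisive point is to evaluate the two factors that are quadratic in $\varphi^2$. I claim that $x=\rho^\ast$ corresponds exactly to $\varphi^2=\tfrac{1+\sqrt5}{2}=:g$, the golden ratio: indeed $\varphi^2+2+\varphi^{-2}=x^2=2+\sqrt5$ forces $\varphi^2+\varphi^{-2}=\sqrt5$, and $\varphi^2=g$ solves this since $g+g^{-1}=\sqrt5$. Because $g^2=g+1$, the factor $\varphi^4-\varphi^2-1$ vanishes precisely at $x=\rho^\ast$, while $\varphi^4+\varphi^2-1=2g>0$ there. For $x\ge\rho^\ast$ one has $\varphi\ge\sqrt g$ (as $x\mapsto\varphi(x)$ is increasing for $x\ge2$), hence $\varphi^2\ge g$; since $t\mapsto t^2-t-1$ is nonnegative for $t\ge g$, the first factor is $\ge0$ and the second is clearly $>0$. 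As $\varphi^{n-3},\varphi^{-(n+1)}>0$, the right-hand side is strictly positive, so $\Phi(M,x)>0$ on $[\rho^\ast,\infty)$, which completes the argument.

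The only genuinely delicate step is the algebraic simplification to this two-term form together with the recognition that the threshold $\rho^\ast$ is exactly the value making $\varphi^2=g$ and the leading factor vanish; once that identification is in place, positivity for $x\ge\rho^\ast$ is immediate from the sign analysis of $t^2-t-1$. I expect this same reduction---pendant deletion, the imaginary-cycle formula $\Phi(C_{n-1}',x)=\varphi^{n-1}+\varphi^{-(n-1)}$, and the golden-ratio factorization---to be reusable for the neighbouring unicyclic cases such as $C'_{3,n}$ and $C'_{4,6}$ appearing in Theorem~\ref{thm3.5}(iv).
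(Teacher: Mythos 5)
Your proof is correct; I checked the key identity $(\varphi-\varphi^{-1})\Phi(C'_{n-1,n},x)=\varphi^{n-3}(\varphi^4-\varphi^2-1)+\varphi^{-(n+1)}(\varphi^4+\varphi^2-1)$ against the paper's expansion $\Phi(C'_{n-1,n},x)=x\bigl[x\Phi(P_{n-2},x)-2\Phi(P_{n-3},x)\bigr]-\Phi(P_{n-2},x)$ and it holds, as does your identification of $\varphi(\rho^\ast)^2$ with the golden ratio $g=\frac{\sqrt{5}+1}{2}$ (the paper itself records $\varphi(\rho^\ast)^2=\frac{\sqrt{5}+1}{2}$), and the closed form $\Phi(C'_{n-1},x)=\varphi^{n-1}+\varphi^{-(n-1)}$ for the imaginary cycle. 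Your front end coincides with the paper's: pendant-vertex expansion via Lemma~\ref{lem2.3}, Corollary~\ref{cor2.03} for the imaginary cycle, the $\varphi$-parametrization of Lemma~\ref{lem2.6}, and Lemma~\ref{lem2.02} to convert $\lambda_1<\rho^\ast$ into $\rho<\rho^\ast$. Where you genuinely diverge is the endgame. The paper evaluates $\Phi$ at the single point $\rho^\ast$, getting $\Phi(C'_{n-1,n},\rho^\ast)=\frac{4}{(\sqrt{5}-1)\varphi(\rho^\ast)^{n-2}}>0$; since positivity at one point only shows that an even number of eigenvalues exceed $\rho^\ast$, the paper must additionally rule out $\lambda_2>\rho^\ast$ by interlacing combined with the degree bound (Lemmas~\ref{lem2.1} and \ref{lem2.04} applied to $C'_{n-1,n}-u$). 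Your factorization instead yields strict positivity on the whole ray $[\rho^\ast,\infty)$: the coefficient $\varphi^4-\varphi^2-1=t^2-t-1$ (with $t=\varphi^2\geq g$) is nonnegative there and $\varphi^4+\varphi^2-1>0$ always, so $\lambda_1<\rho^\ast$ falls out with no interlacing step at all. What your route buys: it is more self-contained, and it exposes $\rho^\ast$ as the exact threshold --- the coefficient of the dominant term $\varphi^{n-3}$ vanishes precisely at $\varphi^2=g$ --- which in effect re-derives $\lim_{n\rightarrow\infty}\rho(C'_{n-1,n})=\rho^\ast$ and anticipates Lemma~\ref{lem4.5}, where the paper must argue separately from \eqref{eq:3.04}. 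What the paper's route buys: single-point evaluation is computationally lighter and serves as a uniform template reused for the other families in Section~3 (e.g.\ $C'_{3,n}$ in Lemma~\ref{lem3.02} and $T_s^\ast$, $T_s^{\ast\ast}$, $\Theta'_k$), where an equally clean two-term whole-ray factorization may not be available.
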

\begin{proof}
Let $u$ be the vertex of degree $1$ in $C'_{n-1,n}$, and let $v$ be the neighbour of $u.$ By Lemma~\ref{lem2.3} and Corollary~\ref{cor2.03}, one has
\begin{align}\notag
\Phi(C'_{n-1,n},x)&=x\Phi(C'_{n-1,n}-u,x)-\Phi(C'_{n-1,n}-u-v,x)\\ \notag
&=x[x\Phi(P_{n-2},x)-2\Phi(P_{n-3},x)]-\Phi(P_{n-2},x).
\end{align}
Together with \eqref{eq:2.3}, one has
\begin{align}\notag
\Phi(C'_{n-1,n},x)&=(x^2-1)\frac{\varphi(x)^{2n-2}-1}{\varphi(x)^{n}-\varphi(x)^{n-2}}
-2x\frac{\varphi(x)^{2n-4}-1}{\varphi(x)^{n-1}-\varphi(x)^{n-3}} \\ \label{eq:3.04}
&=\frac{1}{\varphi(x)^{n-2}(\varphi(x)^{2}-1)}[(x^2-1)(\varphi(x)^{2n-2}-1)
-2x\varphi(x)(\varphi(x)^{2n-4}-1)].
\end{align}
{Note that since $\rho^\ast=\sqrt{2+\sqrt{5}},$ we have that $\rho^\ast\varphi(\rho^\ast)=\frac{3+\sqrt{5}}{2}$ and $\varphi(\rho^\ast)^2=\frac{\sqrt{5}+1}{2}.$ 
Hence,}
\begin{align}\notag
\Phi(C'_{n-1,n},\rho^\ast)&=\frac{2}{(\sqrt{5}-1)\varphi(\rho^\ast)^{n-2}}[(1+\sqrt{5})
((\text{{\scriptsize $\frac{\sqrt{5}+1}{2}$}})^{n-1}-1)
-(3+\sqrt{5})((\text{{\scriptsize $\frac{\sqrt{5}+1}{2}$}})^{n-2}-1)] \\ \notag
&=\frac{4}{(\sqrt{5}-1)\varphi(\rho^\ast)^{n-2}} \\ \notag
&>0,
\end{align}
and so $\rho^\ast>\lambda_1(C'_{n-1,n})$ or $\rho^\ast<\lambda_2(C'_{n-1,n})$. On the other hand, by Lemmas~\ref{lem2.04} and \ref{lem2.1}, $\lambda_2(C'_{n-1,n})\leq\lambda_1(C'_{n-1,n}-u)<\rho^\ast,$ and hence $\rho^\ast>\lambda_1(C'_{n-1,n}).$ Finally, by Lemma \ref{lem2.02}, $\lambda_1(C'_{n-1,n})=\rho(C'_{n-1,n}),$ which finishes the proof.
\end{proof}
\begin{lem}\label{lem3.02}
Let $n\geq4.$ Then $\rho(C'_{3,n})\leq \sqrt{2+\sqrt{5}}.$
\end{lem}
\begin{proof}
{By Lemma \ref{lem2.1}, $\rho(C'_{3,n})$ is monotonically increasing, and by Lemma \ref{lem2.04}, $\rho(C'_{3,n})$ is bounded. Hence, $\lim_{m\rightarrow\infty}\rho (C'_{3,m})$ exists. On the other hand, by a direct computation, $\rho (C'_{3,6})=2.0285>2,$ and so by Lemma \ref{lem2.1}, $\lim_{m\rightarrow\infty}\rho(C'_{3,m})\geq\rho (C'_{3,6})> 2.$ Note that $C'_{3,n}$ contains no real mixed odd cycle, by Lemma \ref{lem2.02}, $\lim_{m\rightarrow\infty}\rho (C'_{3,m})=\lim_{m\rightarrow\infty}\lambda_1(C'_{3,m})>2.$ Then by Lemma \ref{lem2.07}, the value $\lim_{m\rightarrow\infty}\rho (C'_{3,m})$ is the largest positive root of
$$
\varphi(x)\Phi(C_3', x)-\Phi(P_2, x)=\frac{x+\sqrt{x^2-4}}{2}(x^3-3x)-(x^2-1),
$$ 
which is $\rho^\ast.$ By Lemma \ref{lem2.1}, for $n\geq4,\,\rho(C'_{3,n})\leq \lim_{m\rightarrow\infty}\rho (C'_{3,m})=\rho^\ast,$ as desired.}
%Let $\widetilde{C}$ be the mixed triangle contained in $C'_{3,n}$ with $\Gamma(\widetilde{C}):=v_1v_2v_3v_1,$ and let $v_1$ be the unique vertex of degree $3$ in $C'_{3,n}.$ By Lemmas~\ref{lem2.3} and \ref{lem2.6},
%\begin{align}\notag
%\Phi(C'_{3,n},x)&=x\Phi(C'_{3,n}-v_2,x)-\Phi(C'_{3,n}-v_2-v_1,x)-\Phi(C'_{3,n}-v_2-v_3,x) \\ \notag
%&=x\Phi(P_{n-1},x)-x\Phi(P_{n-3},x)-\Phi(P_{n-2},x) \\ \notag
%&=\Phi(P_n,x)-x\Phi(P_{n-3},x) \\ \notag
%&=\frac{\varphi(x)^{2n+2}-1}{\varphi(x)^{n+2}-\varphi(x)^{n}}
%-x\frac{\varphi(x)^{2n-4}-1}{\varphi(x)^{n-1}-\varphi(x)^{n-3}} \\ \label{eq:3.05}
%&=\frac{1}{\varphi(x)^{n}(\varphi(x)^{2}-1)}[\varphi(x)^{2n+2}-1-
%x\cdot\varphi(x)(\varphi(x)^{2n-2}-\varphi(x)^{2})]. 
%\end{align}
%Then it follows that
%\begin{align}\notag
%\Phi(C'_{3,n},\rho^\ast)=\frac{(\sqrt{5}+1)^2}{2\varphi(\rho^\ast)^{n}}>0,
%\end{align}
%and so $\rho^\ast>\lambda_1(C'_{3,n})$ or $\rho^\ast<\lambda_2(C'_{3,n})$. On the other hand, by Lemma~\ref{lem2.1}, $\rho^\ast>\lambda_1(C'_{3,n}-v_2)\geq\lambda_2(C'_{3,n}),$ and hence $\rho^\ast>\lambda_1(C'_{3,n})$. Finally, by Lemma \ref{lem2.02}, $\lambda_1(C'_{3,n})=\rho(C'_{3,n}),$ which finishes the proof.
\end{proof}
\begin{prop}\label{prop3.3}
Let $M$ be a connected $C_4''$-free unicyclic mixed graph. Then $\rho(M)\leq \sqrt{2+\sqrt{5}}$ if and only if $M$ is an induced subgraph of one of the following mixed graphs:
\begin{wst}
\item[{\rm (i)}]$C_n$ for $n\geq3;$
\item[{\rm (ii)}]$C_n''$ for odd $n\geq3,$ $G''_{n-2,\frac{n}{2}}$ for even $n\geq10$, $U''_{k,n-6-k}$ for $1\leq k\leq n-7,$ $G''_{8,4},\,G''_{10,5},\,U''_6,\,U''_8;$
\item[{\rm (iii)}]$C'_{n-1,n}$ for $n\geq6,$ $C'_{3,n}$ for $n\geq4,$ $C'_{4,6}$.
\end{wst}
\end{prop}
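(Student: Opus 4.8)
The plan is to classify all connected $C_4''$-free unicyclic mixed graphs of bounded spectral radius by first reducing the possible girth and the type of the unique mixed cycle, and then attaching trees in a controlled way. A unicyclic mixed graph $M$ has a unique mixed cycle $\widetilde{C}$; since $M$ is $C_4''$-free, $\widetilde{C}$ is either positive ($C_g$), imaginary ($C_g'$), or negative ($C_g''$) with $g\neq 4$ in the negative case. The rough strategy is: (1) bound the cycle length $g$ and the number of attached pendant structures using the interlacing and degree bounds of Lemmas~\ref{lem2.1} and \ref{lem2.04}; (2) treat the three cycle-types separately, since the characteristic-polynomial recurrence of Lemma~\ref{lem2.3} behaves differently for positive, imaginary, and negative cycles; (3) within each type, use Lemmas~\ref{lem3.01} and \ref{lem3.02} (which already handle the families $C'_{n-1,n}$ and $C'_{3,n}$) together with explicit forbidden-subgraph exclusions to pin down the finitely many sporadic graphs and the infinite families listed in (i)--(iii).

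First I would establish the reverse (``if'') direction, which is the routine half: for each listed graph I need $\rho\le\rho^\ast$. The families $C_n$ (positive cycles) are cospectral to ordinary cycles with $\rho=2<\rho^\ast$. For $C_n''$ with odd $n$, $C'_{3,n}$, $C'_{n-1,n}$, and the like, the bounds follow from the computations of Lemmas~\ref{lem3.01}--\ref{lem3.02} and analogous polynomial evaluations at $\rho^\ast$ using the closed form \eqref{eq:2.3}; one evaluates $\Phi(M,\rho^\ast)$, shows it is positive, and combines with interlacing ($\lambda_2(M)\le\lambda_1(M-u)<\rho^\ast$) plus Lemma~\ref{lem2.02} (spectrum symmetric about zero, so $\rho=\lambda_1$) to conclude $\rho^\ast>\lambda_1(M)=\rho(M)$. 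Because these graphs are the maximal members of each family, induced subgraphs inherit the bound by Lemma~\ref{lem2.1}. The sporadic graphs $G''_{8,4}$, $G''_{10,5}$, $U''_6$, $U''_8$, $G''_{n-2,n/2}$, and $U''_{k,n-6-k}$ are checked by the same polynomial method or, for the finite ones, by direct computation.

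For the forward (``only if'') direction I would argue by excluding small forbidden subgraphs. If $\rho(M)\le\rho^\ast$, then by interlacing every induced subgraph also has spectral radius at most $\rho^\ast$, and by Lemma~\ref{lem2.04} every vertex has degree at most $4$, with degree-$4$ vertices forced to be rare. I would identify a short list of minimal ``forbidden'' unicyclic configurations whose spectral radius strictly exceeds $\rho^\ast$ (for instance, a cycle with too many or badly placed pendant paths, or an imaginary/negative cycle that is too long once any tree is attached), verifying each by evaluating its characteristic polynomial at $\rho^\ast$ and showing $\Phi(M,\rho^\ast)<0$ together with a lower bound on $\lambda_1$. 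Splitting into the three cycle-types: positive cycles force $M$ to be an induced subgraph of some $C_n$ (any genuine tree attachment to a positive cycle already exceeds $\rho^\ast$ unless it reduces to a subcycle); imaginary cycles collapse to the families $C'_{3,n}$, $C'_{n-1,n}$, $C'_{4,6}$ by Lemmas~\ref{lem3.01}--\ref{lem3.02} plus exclusion of the remaining attachment positions; and negative cycles give $C_n''$, the $G''$- and $U''$-families.

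The main obstacle will be the negative-cycle case. Here the admissible graphs are the most numerous and least uniform: one must handle odd cycles $C_n''$ of arbitrary length, even cycles only through the restricted families $G''_{n-2,n/2}$ and $U''_{k,n-6-k}$, and several sporadic exceptions $G''_{8,4},G''_{10,5},U''_6,U''_8$. The difficulty is twofold. First, unlike positive cycles (which mimic simple graphs) and imaginary cycles (covered by two clean lemmas), negative cycles admit nontrivial tree attachments at multiple, parity-dependent positions, so the case analysis on where and how long a path or pendant edge may be affixed is intricate, and the boundary between ``allowed'' and ``forbidden'' depends delicately on $n\bmod 2$ and on the attachment vertices $v_1,v_k$. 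Second, the sporadic graphs sit exactly at the threshold and must be separated from their near-neighbours by careful sign analysis of $\Phi(\cdot,\rho^\ast)$, which requires precise use of the closed form \eqref{eq:2.3} and the identities $\rho^\ast\varphi(\rho^\ast)=\frac{3+\sqrt5}{2}$, $\varphi(\rho^\ast)^2=\frac{\sqrt5+1}{2}$. I would organize this by first bounding the cycle length and the total attached tree size, reducing to finitely many ``maximal'' candidates, and then settling each candidate by the polynomial-evaluation technique already exemplified in Lemmas~\ref{lem3.01} and \ref{lem3.02}.
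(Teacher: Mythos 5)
Your treatment of the imaginary-cycle case is sound and matches the paper's proof in structure: the paper likewise shows (via the forbidden subgraphs $Z_1,Z_2,Z_3$) that every cycle vertex has degree at most $3$, then (via a closed-form evaluation of $\Phi(\cdot,\rho^\ast)$ using \eqref{eq:2.3}) that at most one cycle vertex has degree $3$, and finally rules out all remaining attachments except those giving $C'_{k,k+1}$, $C'_{k,k+2}$ for small $k$, $C'_{4,6}$ and $C'_{3,n}$, with Lemma~\ref{lem2.02} supplying $\rho=\lambda_1$ throughout, exactly as you describe.

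However, there is a genuine gap in your plan for the real cycles, above all the negative ones, which you correctly identify as the hard part but then propose to attack head-on. The paper never analyzes these cases directly: if the unique cycle is positive or negative, then $M$ contains no imaginary mixed cycle, so by Lemma~\ref{lem2.06} it is cycle-isomorphic---hence cospectral---to a signed graph, and the entire classification in items (i) and (ii) is imported in one line from the signed-graph result of Wang et al.\ \cite[Lemma 3.7]{WDHL2023}. Your substitute mechanism, ``bounding the cycle length and the total attached tree size, reducing to finitely many maximal candidates,'' cannot work as stated: item (ii) contains $C_n''$ for all odd $n$, the one-parameter family $G''_{n-2,\frac{n}{2}}$ and the two-parameter family $U''_{k,n-6-k}$, so the cycle length is unbounded, there is no finite list of maximal candidates, and every boundary between allowed and forbidden configurations (which, as you note, depends on $n \bmod 2$ and the attachment positions) must be settled by infinitely many parity-dependent closed-form evaluations of $\Phi(\cdot,\rho^\ast)$---that is, by essentially reproving the main theorem of \cite{WDHL2023} rather than citing it. A further symptom of the missing reduction: your ``if''-direction template invokes Lemma~\ref{lem2.02} to get $\rho=\lambda_1$, but that lemma requires the absence of \emph{real} odd cycles, and it fails for $C_n''$ with $n$ odd (a negative odd cycle, whose spectral radius is attained at $\lambda_{\min}$); the cospectrality with signed graphs sidesteps this issue entirely, whereas your direct approach would need separate arguments for $\lambda_{\min}$ in the negative-odd case.
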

\begin{proof}
We note that a direct computation shows that $\rho(C'_{4,6})<\rho^\ast.$ Together with Lemmas \ref{lem3.01} and \ref{lem3.02}, this implies that all mixed graphs in {\rm (iii)} have spectral radii at most $\rho^\ast.$ Let $M$ be a connected $C_4''$-free unicyclic mixed graph with $\rho(M)\leq \rho^\ast.$ Let $\widetilde{C}$ be the unique mixed cycle (of length $k$) contained in $M.$ If $\widetilde{C}$ is real, then by Lemma~\ref{lem2.06}, $M$ is cycle-isomorphic to a signed graph. A result of Wang et al. \cite[Lemma 3.7]{WDHL2023} implies that $\rho(M)\leq \rho^\ast$ if and only if $M$ is an induced subgraph of 
one of the mixed graphs in {\rm (i)} and {\rm (ii)}.
%$C_n$ for $n\geq3,$ $C_n''$ for odd $n\geq3,$ $G''_{n-2,\frac{n}{2}}$ for even $n\geq10$, $U''_{k,n-6-k}$ for $1\leq k\leq n-7,$ $G''_{8,4},\,G''_{10,5},\,U''_6$ or $U''_8.$

In the following, we will consider the remaining case that $\widetilde{C}$ is imaginary. In this case, by Lemma~\ref{lem2.02}, the spectrum of $M$ is symmetric about $0,$ and so $\rho(M)=\lambda_1(M).$
\begin{figure}
\begin{center}
\psfrag{1}{$Z_1; 2.1358$}\psfrag{2}{$Z_2; 2.1753$}\psfrag{3}{$Z_3; 2.0743$}\psfrag{4}{$Z_4; 2.1358$}
\includegraphics[width=150mm]{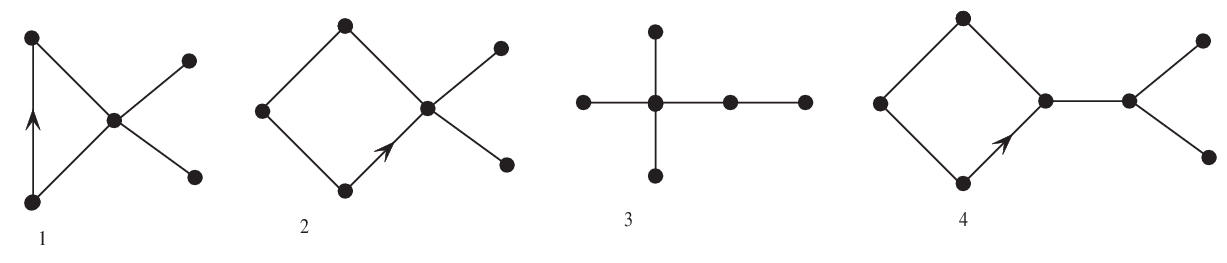} \\
  \caption{Mixed graphs $Z_1,\,Z_2,\,Z_3$ and $Z_4$ together with their spectral radii.}\label{fig1}
\end{center}
\end{figure}
\begin{claim}\label{claim1}
For all $v\in V(\widetilde{C}),$ one has $d_M(v)\leq3.$
\end{claim}
\begin{proof}
Suppose there is a vertex $v\in V(\widetilde{C})$ such that $d_M(v)\geq4.$ Depending on whether $k=3$, $k=4$, or $k\geq5,$ it follows that $Z_1,\,Z_2,$ or $Z_3,$ respectively, is an induced subgraph of $M,$ where $Z_1,\,Z_2$ and $Z_3$ are depicted in Fig.~\ref{fig1}. Direct computations give $\rho(Z_1)=2.1358>\rho^\ast,\,\rho(Z_2)=2.1753>\rho^\ast,\,\rho(Z_3)=2.0743>\rho^\ast.$ Thus, by Lemma~\ref{lem2.1}, $\rho(M)>\rho^\ast,$ a contradiction.
\end{proof}
\begin{claim}\label{claim2}
In $M$ there is at most one vertex in $V(\widetilde{C})$ with degree $3$.
\end{claim}
\begin{proof}
Suppose there are two vertices (say $v_1,v_i$) in $V(\widetilde{C})$ with degree $3.$ Then $M$ contains $M',$ an imaginary mixed $k$-cycle with $2$ pendent edges at $v_1$ and $v_i.$ Let $u_1,u_2$ be the pendent vertices in $M'$ with $u_1\sim v_1$ and $u_2\sim v_i,$ respectively. As $M$ is unicyclic, $M'$ is an induced subgraph of $M$.

By Lemma~\ref{lem2.3} and Corollary~\ref{cor2.03},
\begin{align}\notag
\Phi(M',x)&=x\Phi(M'-u_1,x)-\Phi(M'-u_1-v_1,x) \\ \notag
&=x[x\Phi(M'-u_1-u_2,x)-\Phi(M'-u_1-u_2-v_i,x)] \\ \notag
&\ \ \ -[x\Phi(M'-u_1-v_1-u_2,x)-\Phi(M'-u_1-v_1-u_2-v_i,x)] \\ \label{eq:3.1}
&=x(x^2-2)\Phi(P_{k-1},x)-2x^2\Phi(P_{k-2},x)+\Phi(P_{i-2},x)\cdot\Phi(P_{k-i},x).
\end{align}
Together with \eqref{eq:2.3} and \eqref{eq:3.1}, one has
\begin{align}\notag
\Phi(M',x)&=x(x^2-2)\cdot\frac{\varphi(x)^{2k}-1}{\varphi(x)^{k+1}-\varphi(x)^{k-1}}
-2x^2\cdot\frac{\varphi(x)^{2k-2}-1}{\varphi(x)^{k}-\varphi(x)^{k-2}} \\ \notag
&\ \ \ +\frac{\varphi(x)^{2i-2}-1}{\varphi(x)^{i}-\varphi(x)^{i-2}}\cdot\frac{\varphi(x)^{2(k-i)+2}-1}
{\varphi(x)^{k-i+2}-\varphi(x)^{k-i}}. %\\ \notag
%=&\frac{1}{\varphi(x)^k(\varphi(x)^2-1)^2}\cdot[x(x^2-2)\varphi(x)(\varphi(x)^{2k+2}-\varphi(x)^{2k}
%-\varphi(x)^2+1) \\ \notag
%&-2x^2(\varphi(x)^{2k+2}-\varphi(x)^{2k}-\varphi(x)^4+\varphi(x)^2) \\ \notag
%&+\varphi(x)^{2k+2}-\varphi(x)^{2i}-\varphi(x)^{2(k-i)+4}+\varphi(x)^2].
\end{align}
%Note that $x=\Phi(P_1,x)=\frac{\varphi^4(x)-1}{\varphi^3(x)-\varphi(x)}.$ Together with \eqref{eq:3.2}, one has
%\begin{align}\notag
%\Phi(M',x)=&(x^2-2)\cdot\frac{\varphi^{2k+4}(x)-\varphi^{2k}(x)-\varphi^4(x)+1}{\varphi^k(x)
%(\varphi^2(x)-1)^2}-2x\cdot\frac{\varphi^{2k+2}(x)
%-\varphi^{2k-2}(x)-\varphi^4(x)+1}{\varphi^{k-1}(x)(\varphi^2(x)-1)^2} \\ \notag
%&+\frac{\varphi^{2k}(x)-\varphi^{2i-2}(x)-\varphi^{2(k-i)+2}(x)+1}{\varphi^{k-2}(x)(\varphi^2(x)-1)^2} \\ \notag
%=&\frac{1}{\varphi^k(x)(\varphi^2(x)-1)^2}\cdot[(x^2-2)(\varphi^{2k+4}(x)-\varphi^{2k}(x)
%-\varphi^4(x)+1)-2x\cdot\varphi(x)(\varphi^{2k+2}(x) \\ \label{eq:3.3}
%&-\varphi^{2k-2}(x)-\varphi^4(x)+1)
%+\varphi^{2k+2}(x)-\varphi^{2i}(x)-\varphi^{2(k-i)+4}(x)+\varphi^2(x)].
%\end{align}
%Note that $\rho^\ast=\sqrt{2+\sqrt{5}},$ it follows that $\varphi(\rho^\ast)=\frac{\sqrt{\sqrt{5}+2}+\sqrt{\sqrt{5}-2}}{2}$ and $\varphi^2(\rho^\ast)=\frac{\sqrt{5}+1}{2}.$ 
Then %by \eqref{eq:3.2}, one has
\begin{align}\notag
\Phi(M',\rho^\ast)&=\frac{2}{(3-\sqrt{5})\varphi(\rho^\ast)^k}\cdot[\text{{\scriptsize $\frac{5+3\sqrt{5}}{2}$}}((\text{{\scriptsize $\frac{\sqrt{5}+1}{2}$}})^{k+1}
-(\text{{\scriptsize $\frac{\sqrt{5}+1}{2}$}})^{k}-\text{{\scriptsize $\frac{\sqrt{5}-1}{2}$}})\\ \notag
&\ \ \ -(4+2\sqrt{5})((\text{{\scriptsize $\frac{\sqrt{5}+1}{2}$}})^{k+1}
-(\text{{\scriptsize $\frac{\sqrt{5}+1}{2}$}})^{k}-1) \\ \label{eq:3.4}
&\ \ \ +(\text{{\scriptsize $\frac{\sqrt{5}+1}{2}$}})^{k+1}
-(\text{{\scriptsize $\frac{\sqrt{5}+1}{2}$}})^i
-(\text{{\scriptsize $\frac{\sqrt{5}+1}{2}$}})^{k-i+2}
+\text{{\scriptsize $\frac{\sqrt{5}+1}{2}$}}].
\end{align}
Note that
$$
(\text{{\scriptsize $\frac{\sqrt{5}+1}{2}$}})^i
+(\text{{\scriptsize $\frac{\sqrt{5}+1}{2}$}})^{k-i+2}
\geq 2(\text{{\scriptsize $\frac{\sqrt{5}+1}{2}$}})^{\frac{k+2}{2}}.
$$
Using this and \eqref{eq:3.4} gives 
\begin{align}\notag
\Phi(M',\rho^\ast)%\leq&\frac{2}{(3-\sqrt{5})\varphi(\rho^\ast)^k}\cdot[\frac{5+3\sqrt{5}}{2}
%((\frac{\sqrt{5}+1}{2})^{k+1}-(\frac{\sqrt{5}+1}{2})^{k}-\frac{\sqrt{5}-1}{2})\\ \notag
%&-(4+2\sqrt{5})((\frac{\sqrt{5}+1}{2})^{k+1}-(\frac{\sqrt{5}+1}{2})^{k}-1)
%+(\frac{\sqrt{5}+1}{2})^{k+1}-2(\frac{\sqrt{5}+1}{2})^{\frac{k+2}{2}}+\frac{\sqrt{5}+1}{2}] \\ \notag
\leq\frac{-2(\sqrt{5}+1)}{(3-\sqrt{5})\varphi(\rho^\ast)^k}
\cdot[(\text{{\scriptsize $\frac{\sqrt{5}+1}{2}$}})^{\frac{k}{2}}-2] <0
\end{align}
for $k\geq 3.$
Therefore, $\rho^\ast<\lambda_1(M')=\rho(M')\leq\rho(M),$ a contradiction.
\end{proof}
From Claims~\ref{claim1} and \ref{claim2}, we know that there is at most one vertex in $V(M)\backslash V(\widetilde{C})$ adjacent to some vertex in $V(\widetilde{C}).$ If $V(M)\backslash V(\widetilde{C})=\emptyset,$ then $M$ is switching isomorphic to $C'_{k}$.

%If $V(M)\backslash V(\widetilde{C})\neq\emptyset,$ then since $M$ is connected, there is exactly one vertex (say $u$) in $V(M)\backslash V(\widetilde{C})$ adjacent to some vertex (say $v$) in $V(\widetilde{C}).$ 
If $|V(M)\backslash V(\widetilde{C})|=1,$ then $M$ is switching isomorphic to $C'_{k,k+1}$.

If $|V(M)\backslash V(\widetilde{C})|=2,$ then $M$ is switching isomorphic to $C'_{k,k+2}$. By Lemma~\ref{lem2.3},
\begin{align}\notag
\Phi(M,x)=(x^2-1)\Phi(\widetilde{C},x)-x\Phi(P_{k-1},x).
\end{align}
By using Corollary~\ref{cor2.03} and \eqref{eq:2.3}, one has
\begin{align}\notag
\Phi(M,x)&=x(x^2-2)\Phi(P_{k-1},x)-2(x^2-1)\Phi(P_{k-2},x) \\ \notag
&=x(x^2-2)\frac{\varphi(x)^{2k}-1}{\varphi(x)^{k+1}-\varphi(x)^{k-1}}
-2(x^2-1)\frac{\varphi(x)^{2k-2}-1}{\varphi(x)^{k}-\varphi(x)^{k-2}}.
%=&\frac{1}{\varphi(x)^{k}(\varphi(x)^{2}-1)}[(x^2-2)x\cdot\varphi(x)(\varphi(x)^{2k}-1)
%-2(x^2-1)(\varphi(x)^{2k}-\varphi(x)^{2})].
\end{align}
Then
\begin{align}\notag
\Phi(M,\rho^\ast)%=&\frac{2}{(\sqrt{5}-1)\varphi(\rho^\ast)^{k}}[\frac{5+3\sqrt{5}}{2}
%((\frac{\sqrt{5}+1}{2})^{k}-1)
%-2(\sqrt{5}+1)((\frac{\sqrt{5}+1}{2})^{k}-\frac{\sqrt{5}+1}{2})] \\ \notag
=\frac{2}{(\sqrt{5}-1)\varphi(\rho^\ast)^{k}}
[-(\text{{\scriptsize $\frac{\sqrt{5}+1}{2}$}})^{k-1}+\text{{\scriptsize $\frac{\sqrt{5}+7}{2}$}}].
\end{align}
Hence for $k\geq5,$ $\Phi(M,\rho^\ast)<0,$ and so $\rho^\ast<\lambda_1(M)=\rho(M),$ a contradiction. %Combine with the case $|V(M)\backslash V(\widetilde{C})|=1,$ if $M$ is a unicyclic mixed graph with an imaginary mixed $k$-cycle for $k\geq5,$ then $\rho(M)\leq\rho^\ast$ if and only if $M$ is an induced subgraph of $C'_{k,k+1}$. For $k\leq4,$ $\Phi(M,\rho^\ast)>0$, and so $\rho^\ast>\lambda_1(M)$ or $\rho^\ast<\lambda_2(M)$. On the other hand, by Lemma~\ref{lem2.1}, $\lambda_2(M)\leq\lambda_1(M-v)<\rho^\ast$. Hence $\rho^\ast>\lambda_1(M)=\rho(M),$ as desired. This implies if $M$ is an induced subgraph of $C'_{k,k+2}$ $(k=3,4),$ then $\rho(M)\leq\rho^\ast.$

Because $\rho(C'_{k,k+2})>\rho^\ast$ for $k\geq5,$ we only need to consider the case $k<5$ when $|V(M)\backslash V(\widetilde{C})|\geq3.$ For $k=4,$ consider the mixed graphs $C'_{4,7}$ and $Z_4$ (see Fig.~\ref{fig1}). By a direct computation, $\rho(C'_{4,7})=2.0743>\rho^\ast$ and $\rho(Z_4)=2.1358>\rho^\ast,$ a contradiction. %Hence if $M$ is a unicyclic mixed graph with an imaginary mixed $4$-cycle, and $\rho(M)\leq\rho^\ast$, then $M$ contains no $C_{4,7}'$ and no $Z_4$ as an induced subgraph, and so $M$ is an induced subgraph of $C'_{4,6}$.

\begin{figure}
\begin{center}
\psfrag{2}{$T_s^\ast$}\psfrag{3}{$v_2$}\psfrag{4}{$v_3$}\psfrag{5}{$v_1$}\psfrag{6}{$u_s$}
\psfrag{7}{$u_2$}\psfrag{a}{$u_1$}\psfrag{8}{$w_1$}\psfrag{9}{$w_2$}
\psfrag{1}{$T_s^{\ast\ast}$}
\includegraphics[width=130mm]{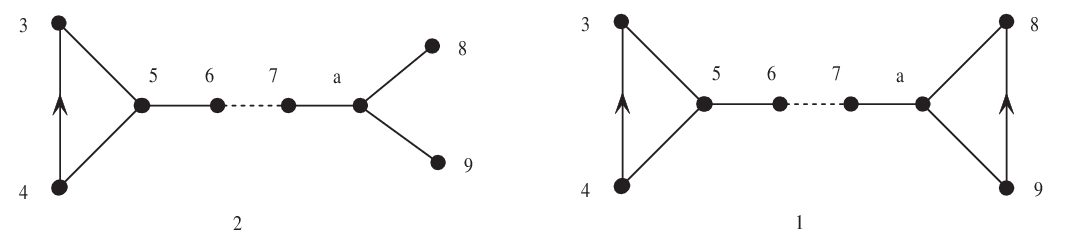} \\
  \caption{Mixed graphs $T_s^\ast$ and $T_s^{\ast\ast}$.}\label{fig2}
\end{center}
\end{figure}
For $k=3,$ if $M$ is not switching isomorphic to $C'_{3,n}$ for some $n\geq 6,$ then $M$ contains $T_s^\ast$ for some $s\geq 1,$ see Fig.~\ref{fig2}. By Lemma~\ref{lem2.3},
\begin{align}\notag
\Phi(T_s^\ast,x)&=x\Phi(T_s^\ast-v_2,x)-\Phi(T_s^\ast-v_2-v_1,x)-\Phi(T_s^\ast-v_2-v_3,x) \\ \notag
&=x[x\Phi(T_s^\ast-v_2-w_1,x)-\Phi(T_s^\ast-v_2-w_1-u_1,x)] \\ \notag
&\ \ \ -[x\Phi(T_s^\ast-v_2-v_1-w_1,x)-\Phi(T_s^\ast-v_2-v_1-w_1-u_1,x)] \\ \notag
&\ \ \ -[x\Phi(T_s^\ast-v_2-v_3-w_1,x)-\Phi(T_s^\ast-v_2-v_3-w_1-u_1,x)] \\ \notag
&=x^2\Phi(P_{s+3},x)-x\Phi(P_{s+2},x)-2x^2\Phi(P_{s+1},x)+x\Phi(P_s,x)+x^2\Phi(P_{s-1},x).
\end{align}
Using \eqref{eq:2.3}, one has
\begin{align}\notag
\Phi(T_s^\ast,x)&=x^2\frac{\varphi(x)^{2s+8}-1}{\varphi(x)^{s+5}-\varphi(x)^{s+3}}
-x\frac{\varphi(x)^{2s+6}-1}{\varphi(x)^{s+4}-\varphi(x)^{s+2}}
-2x^2\frac{\varphi(x)^{2s+4}-1}{\varphi(x)^{s+3}-\varphi(x)^{s+1}} \\ \notag
&\ \ \ +x\frac{\varphi(x)^{2s+2}-1}{\varphi(x)^{s+2}-\varphi(x)^{s}}
+x^2\frac{\varphi(x)^{2s}-1}{\varphi(x)^{s+1}-\varphi(x)^{s-1}}.%\\ \notag
%=&\frac{1}{\varphi(x)^{s+3}(\varphi(x)^{2}-1)}[x^2(\varphi(x)^{2s+8}-1)
%-x\cdot\varphi(x)(\varphi(x)^{2s+6}-1)-2x^2(\varphi(x)^{2s+6}-\varphi(x)^{2}) \\ \notag
%&+x\cdot\varphi(x)(\varphi(x)^{2s+4}-\varphi(x)^{2})+x^2(\varphi(x)^{2s+4}-\varphi(x)^{4})].
\end{align}
Then it follows that
\begin{align}\notag
\Phi(T_s^\ast,\rho^\ast)%=&\frac{2}{(\sqrt{5}-1)\varphi(\rho^\ast)^{s+3}}[(2+\sqrt{5})
%((\frac{\sqrt{5}+1}{2})^{s+4}-1) \\ \notag
%&-\frac{3+\sqrt{5}}{2}((\frac{\sqrt{5}+1}{2})^{s+3}-1)-2(2+\sqrt{5})((\frac{\sqrt{5}+1}{2})^{s+3}
%-\frac{\sqrt{5}+1}{2}) \\ \notag
%&+\frac{3+\sqrt{5}}{2}((\frac{\sqrt{5}+1}{2})^{s+2}-\frac{\sqrt{5}+1}{2})+(2+\sqrt{5})
%((\frac{\sqrt{5}+1}{2})^{s+2}-\frac{3+\sqrt{5}}{2})] \\ \notag
=\frac{-(1+\sqrt{5})^2}{2\varphi(\rho^\ast)^{s+3}}<0.
\end{align}
Thus, $\rho^\ast<\lambda_1(T_s^\ast)\leq\lambda_1(M)=\rho(M),$ a contradiction. Therefore, %if $M$ is a unicyclic mixed graph with an imaginary mixed $3$-cycle, then $\rho(M)\leq\rho^\ast$ if and only if 
$M$ is an induced subgraph of $C'_{3,n}$ for $n\geq6,$ which completes the proof.
\end{proof}
\begin{figure}
\begin{center}
\psfrag{1}{$\Theta_1; 2.5616$}\psfrag{2}{$\Theta_2; 2.3429$}\psfrag{3}{$\Theta_3; 2.2361$}\psfrag{4}{$\Theta_4; 2.3429$}\psfrag{5}{$\Theta_5; 2.5616$}
\includegraphics[width=140mm]{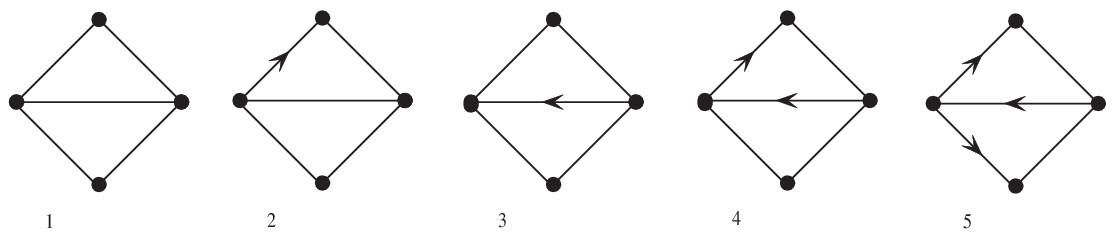} \\
  \caption{Mixed graphs $\Theta_1,\ldots,\Theta_5$ together with their spectral radii.}\label{fig3}
\end{center}
\end{figure}

\subsection{\normalsize Mixed graphs with at least two mixed cycles}
\begin{prop}\label{prop3.4}
If $M$ is a connected $C_4''$-free mixed graph with at least two mixed cycles, then $\rho(M)>\sqrt{2+\sqrt{5}},$ unless $M$ is an induced subgraph of $M^\ast,$ see Fig.~\ref{fig01}.
\end{prop}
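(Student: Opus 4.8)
The plan is to exploit the monotonicity of the spectral radius under induced subgraphs (Lemma~\ref{lem2.1}): since every induced subgraph $M'$ of $M$ satisfies $\rho(M')\le\rho(M)$, it suffices to produce, for every connected $C_4''$-free $M$ with at least two mixed cycles that is \emph{not} an induced subgraph of $M^\ast$, an induced subgraph whose spectral radius already exceeds $\rho^\ast$. As in the unicyclic case, ``small'' configurations are dispatched by a direct computation of $\rho$, while infinite families are handled by evaluating the characteristic polynomial at $\rho^\ast$ and showing it is negative, using the recurrence of Lemma~\ref{lem2.3}, the closed form of Lemma~\ref{lem2.6}, and Corollary~\ref{cor2.03}. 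Throughout I would use that $\rho(M)\le\rho^\ast$ already forces $\Delta(M)\le 4$ by Lemma~\ref{lem2.04}.

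First I would split according to whether all mixed cycles of $M$ are real or at least one is imaginary. If every mixed cycle is real, then $M$ contains no imaginary mixed cycle, so by Lemma~\ref{lem2.06} it is cycle-isomorphic, hence cospectral, to a signed graph $\dot G$; moreover $C_4''$-freeness translates into $\dot G$ having no negative quadrilateral (a negative $4$-cycle is exactly $C_4''$). The classification of signed graphs with spectral radius at most $\rho^\ast$ of Wang et al.~\cite{WDHL2023}, together with McKee--Smyth~\cite{MS2007} for the value $2$, then lists all candidates, and I would intersect that list with the two conditions ``at least two mixed cycles'' and ``no negative $4$-cycle'': the only survivors should be induced subgraphs of $M^\ast$. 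This disposes of the all-real case by citation plus a short check.

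The heart of the argument is the case where some mixed cycle $\widetilde C$ of length $k$ is imaginary, where I expect $\rho(M)>\rho^\ast$ except when $M$ is an induced subgraph of $M^\ast$. I would first use the forbidden subgraphs $Z_1,Z_2,Z_3$ of Fig.~\ref{fig1} to exclude any cycle vertex of degree $\ge 4$, so that every vertex of $\widetilde C$ has degree at most $3$; the unicyclic classification (Proposition~\ref{prop3.3}) further constrains which paths may hang off $\widetilde C$. Since $M$ carries a second cycle, I would then classify how the two cycles meet along one of the three bicyclic ``bases'' --- a theta graph, two cycles sharing a single vertex, or two cycles joined by a path --- each refined by the admissible ($C_4''$-free) orientations of the cycles involved. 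The theta configurations are governed by $\Theta_1,\ldots,\Theta_5$ of Fig.~\ref{fig3}, all of which satisfy $\rho>\rho^\ast$; the remaining bases, together with their admissible pendant paths, yield a few further explicit small graphs and some infinite families whose characteristic polynomials I would show to be negative at $\rho^\ast$, exactly along the lines of Lemmas~\ref{lem3.01}--\ref{lem3.02} (using the identities $\rho^\ast\varphi(\rho^\ast)=\tfrac{3+\sqrt5}{2}$ and $\varphi(\rho^\ast)^2=\tfrac{\sqrt5+1}{2}$).

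The main obstacle is precisely this last case analysis: organizing the finitely many bicyclic shapes together with every $C_4''$-free orientation of their cycles, and then controlling the infinite families (arbitrarily long internal or pendant paths) through sign computations of $\Phi(M,\rho^\ast)$ built from the recurrences. Keeping the bookkeeping complete --- so that no admissible configuration with an imaginary cycle slips through except those already subsumed under $M^\ast$ --- while identifying switching-isomorphic orientations (via Lemma~\ref{lem2.1} and the cycle-weight criterion) so as not to overcount, is the delicate part; the spectral estimates themselves are routine once the correct finite list of forbidden subgraphs has been assembled.
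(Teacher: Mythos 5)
Your outline is sound and relies on the same toolkit as the paper---interlacing (Lemma~\ref{lem2.1}) plus a finite stock of forbidden induced subgraphs, with sporadic graphs dispatched numerically and infinite families by showing $\Phi(\cdot,\rho^\ast)<0$ via Lemma~\ref{lem2.3}, Lemma~\ref{lem2.6} and Corollary~\ref{cor2.03}---but the organization is genuinely different. The paper does not split at the top into ``all cycles real'' versus ``some cycle imaginary''; it stratifies by the girth of $M$ (cases $3,4,5,6,8$, and $7$ or $\geq 9$) and, within each case, by how a girth-realizing (hence chordless, hence induced) mixed cycle shares a vertex, an edge, or a path of length $2$, $3$ or $4$ with a second induced cycle, treating real and imaginary orientations side by side. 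Its main engine is Proposition~\ref{prop3.3}: most configurations are killed by exhibiting an induced \emph{unicyclic} subgraph ($C_{k,k+1}$, $C'_{k,k+2}$, $C''_{k,k+3}$, $G''_{k,3}$, $G'_{3,2}$, \ldots) absent from the unicyclic list, and only a handful of genuinely new families ($T_s^{\ast\ast}$, $\Theta'_k$, $F^\ast$, $G^\ast_t$) need fresh polynomial evaluations at $\rho^\ast$. Your shortcut for the real branch---pass to a signed graph via Lemma~\ref{lem2.06} and quote the full classifications of \cite{WDHL2023} and \cite{MS2007}, intersecting with ``no negative $4$-cycle'' and ``at least two cycles''---is legitimate and arguably cleaner than the paper's re-derivation of that branch inside the girth analysis, provided the intersection of those lists is actually carried out.

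Three caveats. First, $M^\ast$ lives entirely in your all-real branch: in the surviving girth-$6$ theta configuration both hexagons are forced to be negative (a positive hexagon would yield the forbidden $C_{6,7}$, an imaginary one the forbidden $C'_{6,8}$), so your imaginary branch should conclude with the \emph{unconditional} strict inequality $\rho(M)>\rho^\ast$; your hedge ``except when $M$ is an induced subgraph of $M^\ast$'' there indicates you have not located where the exceptional graph arises. Second, importing the degree bound via $Z_1,Z_2,Z_3$ and classifying ``bicyclic bases'' must contend with induced-ness: in a graph with many cycles these configurations can acquire chords or extra adjacencies (in Proposition~\ref{prop3.3} induced-ness was automatic from unicyclicity), which is precisely why the paper anchors every case on a shortest cycle; without that anchor your enumeration risks non-induced witnesses. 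Third, the theta case needs far more than $\Theta_1,\ldots,\Theta_5$ (the paper uses $\Theta_1$ through $\Theta_{21}$ plus the families above), so the case analysis you defer as ``bookkeeping'' is in fact the bulk of the proof; your skeleton would carry it, but as written the decisive enumeration is not done. Minor slip: the criterion for identifying switching-isomorphic orientations is Lemma~\ref{lem2.4}, not Lemma~\ref{lem2.1}.
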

\begin{proof}
Let $M$ be a connected $C_4''$-free mixed graph with at least two mixed cycles. By a direct computation, $\rho(M^\ast)=2<\rho^\ast.$ We only need to show that $M$ contains a mixed graph $M'$ with $\rho(M')>\rho^\ast$ when $M$ is not an induced subgraph of $M^\ast$. We consider $6$ cases depending on the girth of $M.$

{\bf Girth $3$}. Let $\widetilde{C}_3$ be a mixed $3$-cycle contained in $M.$ If $\widetilde{C}_3$ is real and there is a vertex in $V(M)\backslash V(\widetilde{C}_3)$ adjacent to exactly one vertex in $V(\widetilde{C}_3)$, then $M$ contains $C_{3,4}$ or $C''_{3,4}$. By Proposition~\ref{prop3.3}, $\rho(C_{3,4})>\rho^\ast$ and $\rho(C''_{3,4})>\rho^\ast,$ as desired. In the following, if there is a vertex in $V(M)\backslash V(\widetilde{C}_3)$ adjacent to exactly one vertex in $V(\widetilde{C}_3)$, we only need to consider the case that $\widetilde{C}_3$ is imaginary. We now distinguish cases 3.1, 3.2 and 3.3. 

{\bf 3.1)}. Suppose there is an induced mixed $k$-cycle sharing one edge with $\widetilde{C}_3$.

{For $k=3$, let $v$ be a vertex in $V(M)\backslash V(\widetilde{C}_3)$ adjacent to two or three vertices in $V(\widetilde{C}_3)$. Let $\widetilde{C}_3$ be positive (resp. negative, imaginary) and fix it. Since $M$ is $C_4''$-free, consider all the possibilities of the edges between $v$ and $V(\widetilde{C}_3)$ such that $M[V(\widetilde{C}_3)\cup \{v\}]$ is $C_4''$-free, count the numbers of positive (resp. negative, imaginary) triangles and positive (resp. imaginary) quadrangles contained in $M[V(\widetilde{C}_3)\cup \{v\}]$. By using Lemma~\ref{lem2.4}, it follows that $M[V(\widetilde{C}_3)\cup \{v\}]$ is switching isomorphic to one of $\Theta_i,\, i=1,2,\ldots,9,$ see Fig.~\ref{fig3} and Fig.~\ref{fig4}. By direct computations, all of these mixed graphs have spectral radii larger than $\rho^\ast,$ as desired. }

%For $k=3$, since $M$ is $C_4''$-free, {\b by using Lemma \ref{lem2.4} (Consider $\widetilde{C}_3$ being positive (resp. negative and imaginary), fix it and an extra vertex $v,$ consider all the possible cases of the direction of the other two or three edges.), one may see} $M$ contains one of the mixed graphs $\Theta_i,\, i=1,2,\ldots,9,$ see Fig.~\ref{fig3} and Fig.~\ref{fig4}. By direct computations, all of these mixed graphs have spectral radii larger than $\rho^\ast,$ as desired. 
\begin{figure}
\begin{center}
\psfrag{1}{$\Theta_6; 3$}\psfrag{2}{$\Theta_7; 2.7093$}\psfrag{3}{$\Theta_8; 2.7093$}\psfrag{4}{$\Theta_9; 3$}
\includegraphics[width=120mm]{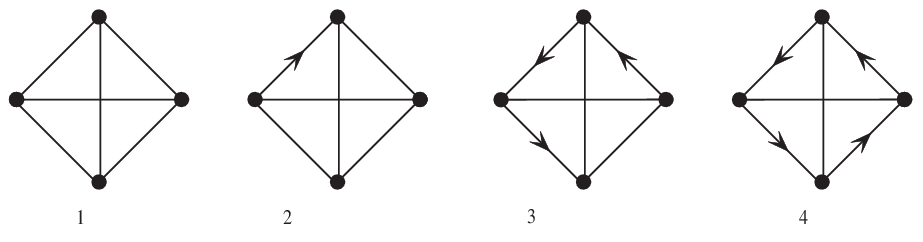} \\
  \caption{Mixed graphs $\Theta_6,\ldots,\Theta_9$ together with their spectral radii.}\label{fig4}
\end{center}
\end{figure}

\begin{figure}
\begin{center}
\psfrag{1}{$\Theta_{10}; 2.2882$}\psfrag{2}{$\Theta_{11}; 2.2303$}\psfrag{3}{$\Theta_{12}; 2.2303$}\psfrag{4}{$Z_5; 2.2361$}
\includegraphics[width=140mm]{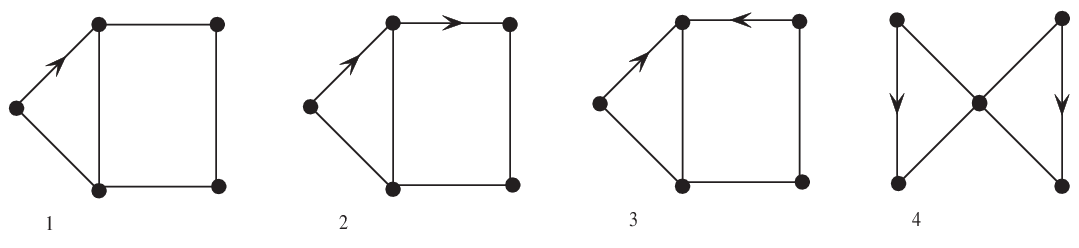} \\
  \caption{Mixed graphs $\Theta_{10},\Theta_{11},\Theta_{12}$ and $Z_5$ together with their spectral radii.}\label{fig5}
\end{center}
\end{figure}

For $k=4,$ we may assume there is no mixed $3$-cycle sharing one edge with $\widetilde{C}_3.$ Since $M$ is $C_4''$-free, $M$ contains one of $\Theta_{10},\Theta_{11}$ and $\Theta_{12}$, see Fig.~\ref{fig5}. By direct computations, all of these mixed graphs have spectral radii larger than $\rho^\ast$, as desired.

For $k\geq5,$ we may assume there is no mixed $3$-cycle and no mixed $4$-cycle sharing one edge with $\widetilde{C}_3.$ Then $M$ contains $G'_{3,2}$. By Proposition~\ref{prop3.3}, $\rho(G'_{3,2})>\rho^\ast,$ as desired.

{\bf 3.2)}. Suppose there is no induced mixed cycle sharing an edge with $\widetilde{C}_3$, but an induced mixed $k$-cycle sharing one vertex with $\widetilde{C}_3$.

For $k=3,$ $M$ contains $Z_5,$ see Fig.~\ref{fig5}. By a direct computation, $\rho(Z_5)=2.2361>\rho^\ast$, as desired.
For $k\geq4,$ $M$ contains $Z_1$, see Fig.~\ref{fig1}. By Proposition~\ref{prop3.3}, $\rho(Z_1)>\rho^\ast,$ as desired.

{\bf 3.3)}. Suppose there is no induced mixed cycle sharing a vertex with $\widetilde{C}_3$.

Now $M$ contains $T_{s}^\ast$ or $T_s^{\ast\ast},$ see Fig.~\ref{fig2}. By Proposition~\ref{prop3.3}, $\rho(T_{s}^\ast)>\rho^\ast,$ as desired.
%\begin{figure}
%\begin{center}
%\psfrag{2}{$T_s^{\ast\ast}$}\psfrag{3}{$v_2$}\psfrag{4}{$v_3$}\psfrag{5}{$v_1$}
%\psfrag{6}{$w_s$}\psfrag{7}{$w_1$}\psfrag{a}{$u_1$}\psfrag{8}{$u_3$}\psfrag{9}{$u_2$}
%\includegraphics[width=70mm]{fig6.eps} \\
 % \caption{Mixed graphs $T_s^{\ast\ast}$.}\label{fig6}
%\end{center}
%\end{figure}
For $T_s^{\ast\ast},$ by Lemma~\ref{lem2.3}, one has
\begin{align}\notag
\Phi(T_s^{\ast\ast},x)&=x\Phi(T_s^{\ast\ast}-w_1,x)-\Phi(T_s^{\ast\ast}-w_1-u_1,x)
-\Phi(T_s^{\ast\ast}-w_1-w_2,x) \\ \notag
&=x[x\Phi(T_s^{\ast\ast}-w_1-v_2,x)-\Phi(T_s^{\ast\ast}-w_1-v_2-v_1,x)-\Phi(T_s^{\ast\ast}
-w_1-v_2-v_3,x)] \\ \notag
&\ \ \ -[x\Phi(T_s^{\ast\ast}-w_1-u_1-v_2,x)-\Phi(T_s^{\ast\ast}-w_1-u_1-v_2-v_1,x) \\ \notag
&\ \ \ -\Phi(T_s^{\ast\ast}-w_1-u_1-v_2-v_3,x)]-[x\Phi(T_s^{\ast\ast}-w_1-w_2-v_2,x) \\ \notag
&\ \ \ -\Phi(T_s^{\ast\ast}-w_1-w_2-v_2-v_1,x)-\Phi(T_s^{\ast\ast}-w_1-w_2-v_2-v_3,x)] \\ \notag
&=x^2\Phi(P_{s+3},x)-2x\Phi(P_{s+2},x)-(2x^2-1)\Phi(P_{s+1},x)+2x\Phi(P_s,x)+x^2\Phi(P_{s-1},x).
\end{align}
Together with \eqref{eq:2.3},
\begin{align}\notag
\Phi(T_s^{\ast\ast},x)&=x^2\cdot\frac{\varphi(x)^{2s+8}-1}{\varphi(x)^{s+5}-\varphi(x)^{s+3}}
-2x\cdot\frac{\varphi(x)^{2s+6}-1}{\varphi(x)^{s+4}-\varphi(x)^{s+2}}
-(2x^2-1)\cdot\frac{\varphi(x)^{2s+4}-1}{\varphi(x)^{s+3}-\varphi(x)^{s+1}} \\ \notag
&\ \ \ +2x\cdot\frac{\varphi(x)^{2s+2}-1}{\varphi(x)^{s+2}-\varphi(x)^{s}}+
x^2\cdot\frac{\varphi(x)^{2s}-1}{\varphi(x)^{s+1}-\varphi(x)^{s-1}}.% \\ \notag
%=&\frac{1}{\varphi(x)^{s+4}(\varphi(x)^{2}-1)}[x^2(\varphi(x)^{2s+10}-1)
%-2x\cdot\varphi(x)(\varphi(x)^{2s+8}-1)\\ \notag
%&-(2x^2-1)(\varphi(x)^{2s+8}-\varphi(x)^2)+2x\cdot\varphi(x)(\varphi(x)^{2s+6}
%-\varphi(x)^{2})+x^2(\varphi(x)^{2s+6}-\varphi(x)^{4})].
\end{align}
Now it follows that
\begin{align}\notag
\Phi(T_s^{\ast\ast},\rho^\ast)%=&\frac{2}{(\sqrt{5}-1)\varphi(\rho^\ast)^{s+4}}
%[(2+\sqrt{5})((\frac{\sqrt{5}+1}{2})^{s+5}-1)
%-(3+\sqrt{5})((\frac{\sqrt{5}+1}{2})^{s+4}-1) \\ \notag
%&-(3+2\sqrt{5})((\frac{\sqrt{5}+1}{2})^{s+4}-\frac{\sqrt{5}+1}{2})+(3+\sqrt{5})
%((\frac{\sqrt{5}+1}{2})^{s+3}-\frac{\sqrt{5}+1}{2}) \\ \notag
%&+(2+\sqrt{5})((\frac{\sqrt{5}+1}{2})^{s+3}-\frac{\sqrt{5}+3}{2})] \\ \notag
=\frac{-(\sqrt{5}+1)^2}{\varphi(\rho^\ast)^{s+3}}<0.
\end{align}
Thus, $\rho^\ast<\lambda_1(T_s^{\ast\ast})\leq\rho(T_s^{\ast\ast}),$ as desired.
\begin{figure}
\begin{center}
\psfrag{1}{$\Theta_{13}; 2.4495$}\psfrag{2}{$\Theta_{14}; 2.2882$}\psfrag{3}{$\Theta_{15}; 2.2562$}\psfrag{4}{$\Theta_{16}; 2.2562$}
\includegraphics[width=120mm]{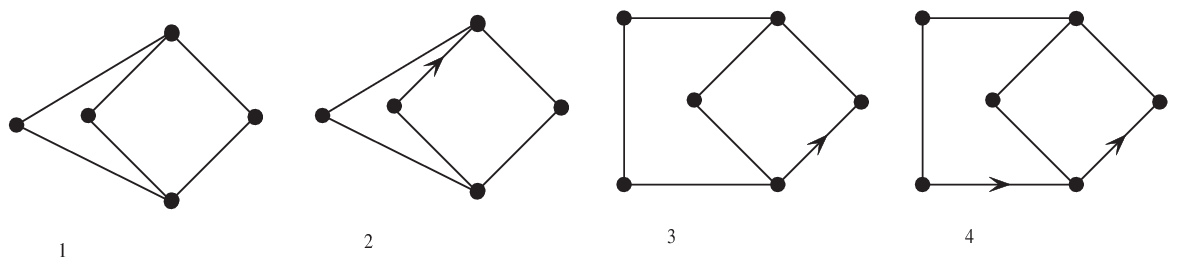} \\
  \caption{Mixed graphs $\Theta_{13},\ldots,\Theta_{16}$ together with their spectral radii.}\label{fig7}
\end{center}
\end{figure}

{\bf Girth 4}. Let $\widetilde{C}_4$ be a mixed $4$-cycle contained in $M.$ Since $M$ is $C_4''$-free, $\widetilde{C}_4$ is positive or imaginary.

If $\widetilde{C}_4$ is positive, then $M$ contains $C_{4,5},\,\Theta_{13},$ or $\Theta_{14},$ where $\Theta_{13}$ and $\Theta_{14}$ are depicted in Fig.~\ref{fig7}. By Proposition \ref{prop3.3} and direct computations, all of these mixed graphs have spectral radii larger than $\rho^\ast,$ as desired.

If $\widetilde{C}_4$ is imaginary, then $M$ contains $C'_{4,7},\,G'_{4,2},\,G'_{4,3},\,Z_2,\,Z_4,\,\Theta_{14},\, \Theta_{15},\,\Theta_{16},\,\Theta_{17},$ or $\Theta_{18},$ where $Z_2$ and $Z_4$ are depicted in Fig.~\ref{fig1}, $\Theta_{14},\,\Theta_{15},\,\Theta_{16}$ are depicted in Fig.~\ref{fig7}, $\Theta_{17}$ and $\Theta_{18}$ are depicted in Fig.~\ref{fig8}. By Proposition~\ref{prop3.3} and direct computations, all of these mixed graphs have spectral radii larger than $\rho^\ast,$ as desired.
%{\bf Subcase 2.1}. There is an induced mixed $k$-cycle sharing two adjacent edges with $\widetilde{C}_4$.

%For $k=4,$ since $M$ is $C_4''$-free, $M$ contains one of $\Theta_{13}$ and $\Theta_{14},$ see Fig.~\ref{fig7}. By a direct computation, both $\Theta_{13}$ and $\Theta_{14}$ have spectral radii larger than $\rho^\ast,$ as desired.

%For $k\geq5$ and $\widetilde{C}_4$ being positive, $M$ contains $C_{4,5}$. By Proposition~\ref{prop3.3}, $\rho(C_{4,5})>\rho^\ast,$ as desired.

%For $k=5$ and $\widetilde{C}_4$ being imaginary, $M$ contains one of $\Theta_{15}$ and $\Theta_{16},$ see Fig.~\ref{fig7}. By a direct computation, both $\Theta_{15}$ and $\Theta_{16}$ have spectral radii larger than $\rho^\ast,$ as desired.

%For $k\geq6$ and $\widetilde{C}_4$ being imaginary, $M$ contains $G'_{4,3}$. By Proposition~\ref{prop3.3}, $\rho(G'_{4,3})>\rho^\ast,$ as desired.
\begin{figure}
\begin{center}
\psfrag{1}{$\Theta_{17}; 2.1701$}\psfrag{2}{$\Theta_{18}; 2.2361$}\psfrag{3}{$\Theta_{19}; 2.2361$}\psfrag{4}{$\Theta_{20}; 2.1358$}
\includegraphics[width=120mm]{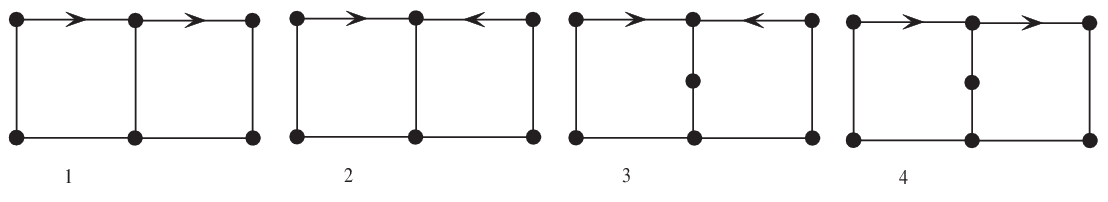} \\
  \caption{Mixed graphs $\Theta_{17},\ldots,\Theta_{20}$ together with their spectral radii.}\label{fig8}
\end{center}
\end{figure}

%{\bf Subcase 2.2}. There is no induced mixed cycle sharing two adjacent edges with $\widetilde{C}_4$, but an induced mixed $k$-cycle sharing one edge with $\widetilde{C}_4$.

%For $k=4,$ $M$ contains one of $C_{4,5},$ $\Theta_{17}$ and $\Theta_{18},$ where $\Theta_{17}$ and $\Theta_{18}$ are depicted in Fig.~\ref{fig8}. By a direct computation, all of these mixed graphs have spectral radii larger than $\rho^\ast,$ as desired.

%For $k\geq5,$ $M$ contains $C_{4,5}$ or $G'_{4,2}$. By Proposition~\ref{prop3.3}, both $C_{4,5}$ and $G'_{4,2}$ have spectral radii larger than $\rho^\ast,$ as desired.

%{\bf Subcase 2.3}. There is no induced mixed cycle sharing edge with $\widetilde{C}_4$.

%Now $M$ contains one of $C_{4,5},\,C'_{4,7}$ and $Z_2,\,Z_4$, where $Z_2$ and $Z_4$ are depicted in Fig.~\ref{fig1} and Fig.~\ref{fig2}, respectively. By Proposition~\ref{prop3.3}, all of these mixed graphs have spectral radii larger than $\rho^\ast,$ as desired.

{\bf Girth 5}. Let $\widetilde{C}_5$ be a mixed $5$-cycle contained in $M.$ Depending on whether $\widetilde{C}_5$ shares two adjacent edges with another induced mixed $k$-cycle, $M$ contains one of $C_{5,6},\,C''_{5,6},\,\Theta_{19},\,\Theta_{20}\,(k=5)$ and $G_{5,3},\,G'_{5,3},\,G''_{5,3}\,(k\geq6),$ where $\Theta_{19}$ and $\Theta_{20}$ are depicted in Fig.~\ref{fig8}, or one of $C_{5,6},\,C''_{5,6}$ and $C'_{5,7}$. By Proposition~\ref{prop3.3} and direct computations, all of these mixed graphs have spectral radii larger than $\rho^\ast,$ as desired.

%{\bf Subcase 3.1}. There is an induced mixed $k$-cycle sharing two adjacent edges with $\widetilde{C}_5$.

%For $k=5,$ $M$ contains one of $C_{5,6}$, $C''_{5,6}$ and $\Theta_{19}$, $\Theta_{20},$ where $\Theta_{19}$ and $\Theta_{20}$ are depicted in Fig.~\ref{fig8}. By a direct computation, all of these mixed graphs have spectral radii larger than $\rho^\ast,$ as desired.

%For $k\geq6,$ $M$ contains one of $G_{5,3},\,G'_{5,3}$ and $G''_{5,3}$. By Proposition~\ref{prop3.3}, all of these mixed graphs have spectral radii larger than $\rho^\ast,$ as desired.

%{\bf Subcase 3.2}. There is no induced mixed cycle sharing two adjacent edges with $\widetilde{C}_5$.

%Now $M$ contains one of $C_{5,6},\,C''_{5,6}$ and $C'_{5,7}$. By Proposition~\ref{prop3.3}, all of these mixed graphs have spectral radii larger than $\rho^\ast,$ as desired.
\begin{figure}
\begin{center}
\psfrag{1}{$\Theta_{21}; 2.1149$}\psfrag{2}{$\Theta_{k}'$}\psfrag{3}{$v_1$}\psfrag{4}{$v_2$}\psfrag{5}{$v_3$}
\psfrag{6}{$v_{k-4}$}\psfrag{7}{$v_{k-3}$}\psfrag{8}{$v_{k-2}$}
\psfrag{9}{$w_1$}\psfrag{a}{$w_2$}\psfrag{b}{$u_1$}\psfrag{c}{$u_2$}
\includegraphics[width=100mm]{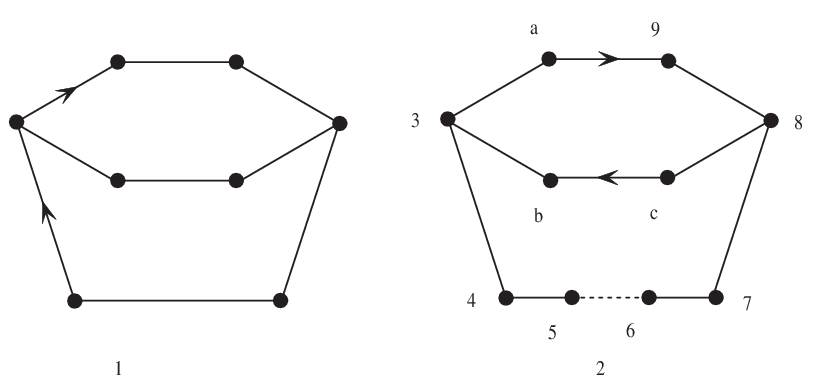} \\
  \caption{Mixed graphs $\Theta_{21}$ and $\Theta_{k}'$.}\label{fig9}
\end{center}
\end{figure}

{\bf Girth 6}. Let $\widetilde{C}_6$ be a mixed $6$-cycle contained in $M.$ We now distinguish cases 6.1, 6.2 and 6.3. 

{\bf 6.1)}. Suppose there is an induced mixed $k$-cycle sharing a path of length $3$ with $\widetilde{C}_6$.

For $k=6,$ $M$ contains one of $C_{6,7}$ and $\Theta_{21},$ where $\Theta_{21}$ is as depicted in Fig.~\ref{fig9}. By Proposition~\ref{prop3.3} and a direct computation, both $C_{6,7}$ and $\Theta_{21}$ have spectral radii larger than $\rho^\ast,$ as desired.

For $k\geq7$, $M$ contains one of $C_{6,7}$, $C'_{6,8},$ $C_{k,k+1}$ and $\Theta'_k,$ where $\Theta'_k$ is as depicted in Fig.~\ref{fig9}. By Proposition~\ref{prop3.3}, all of $C_{6,7}$, $C'_{6,8}$ and $C_{k,k+1}$ have spectral radii larger than $\rho^\ast$. For the mixed graph $\Theta'_k,$ by Lemma~\ref{lem2.3}, one has
\begin{align}\notag
\Phi(\Theta'_k,x)&=x\Phi(\Theta'_k-u_1,x)-\Phi(\Theta'_k-u_1-v_1,x)
-\Phi(\Theta'_k-u_1-u_2,x)+2\Phi(\Theta'_k-V(\widetilde{C}_6),x) \\ \notag
&=x[x\Phi(\Theta'_k-u_1-w_1,x)-\Phi(\Theta'_k-u_1-w_1-w_2,x)-\Phi(\Theta'_k-u_1-w_1-v_{k-2},x)] \\ \notag
&\ \ \ -[x\Phi(\Theta'_k-u_1-v_1-u_2,x)-\Phi(\Theta'_k-u_1-v_1-u_2-v_{k-2},x)] \\ \notag
&\ \ \ -\Phi(\Theta'_k-u_1-u_2,x)+2\Phi(\Theta'_k-V(\widetilde{C}_6),x) \\ \label{eq:3.8}
&=x^2\Phi(P_{k},x)-2x\Phi(P_{k-1},x)-x^2\Phi(P_{k-2},x)+(x^2+1)\Phi(P_{k-4},x)-\Phi(\widetilde{C}'_k,x).
\end{align}
By Corollary~\ref{cor2.03}, $\Phi(\widetilde{C}'_k,x)=x\Phi(P_{k-1},x)-2\Phi(P_{k-2},x).$ Together with \eqref{eq:2.3} and \eqref{eq:3.8}, one has
\begin{align}\notag
\Phi(\Theta'_k,x)&=x^2\cdot\frac{\varphi(x)^{2k+2}-1}{\varphi(x)^{k+2}-\varphi(x)^{k}}
-3x\cdot\frac{\varphi(x)^{2k}-1}{\varphi(x)^{k+1}-\varphi(x)^{k-1}} \\ \notag
&\ \ \ -(x^2-2)\frac{\varphi(x)^{2k-2}-1}{\varphi(x)^{k}-\varphi(x)^{k-2}}
+(x^2+1)\frac{\varphi(x)^{2k-6}-1}{\varphi(x)^{k-2}-\varphi(x)^{k-4}}. %\\ \notag
%&=\frac{1}{\varphi(x)^{k}(\varphi(x)^{2}-1)}[x^2(\varphi(x)^{2k+2}-1)
%-3x\cdot\varphi(x)(\varphi(x)^{2k}-1) \\ \notag
%&\ \ \ -(x^2-2)(\varphi(x)^{2k}-\varphi(x)^{2})+(x^2+1)(\varphi(x)^{2k-2}-\varphi(x)^{4})].
\end{align}
Now it follows that
\begin{align}\notag
\Phi(\Theta'_k,\rho^\ast)=\frac{-(1+\sqrt{5})^2}{\varphi(\rho^\ast)^{k}}<0.
\end{align}
Thus, $\rho^\ast<\lambda_1(\Theta'_k)\leq\rho(\Theta'_k),$ as desired.
\begin{figure}
\begin{center}
\psfrag{2}{$F^\ast$}\psfrag{1}{$t$}\psfrag{4}{$G_t^\ast$}
\includegraphics[width=100mm]{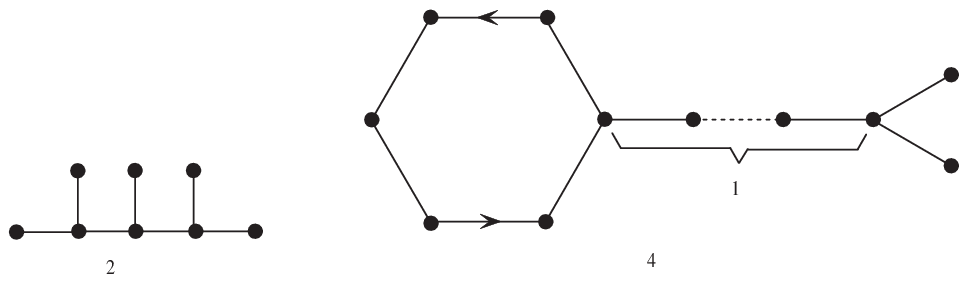} \\
  \caption{Mixed graphs $F^\ast$ and $G_t^\ast$.}\label{fig10}
\end{center}
\end{figure}

{\bf 6.2)}. Suppose there is no induced mixed cycle sharing a path of length $3$ with $\widetilde{C}_6$, but an induced mixed $k$-cycle sharing two adjacent edges with $\widetilde{C}_6$.

For $k=6,$ if $M$ is not an induced subgraph of $M^\ast$, then $M$ contains one of $C_{6,7}$, $C'_{6,8},$ $G''_{6,2},$ $C_{8,9}$ and $F^\ast$ (see Fig.~\ref{fig10}). By Propositions~\ref{prop3.1} and \ref{prop3.3}, all of these mixed graphs have spectral radii larger than $\rho^\ast,$ as desired.

For $k\geq7,$ $M$ contains one of $C_{k,k+1}$, $C'_{k,k+2}$ and $G''_{k,3}$. By Proposition~\ref{prop3.3}, all of these mixed graphs have spectral radii larger than $\rho^\ast,$ as desired.

{\bf 6.3)}. Suppose there is no induced mixed cycle sharing a path of length $2$ or $3$ with $\widetilde{C}_6$.

Now $M$ contains one of $C_{6,7}$, $C'_{6,8}$, $G''_{6,2}$ and $G^\ast_t$ (see Fig.~\ref{fig10}) for some $t\geq1$. By Proposition~\ref{prop3.3}, all of these mixed graphs have spectral radii larger than $\rho^\ast,$ as desired.

%{\bf Case 5}. The girth of $M$ is $7.$

%In this case, $M$ contains one of $C_{7,8},\,C''_{7,8}$ and $C'_{7,9}$. By Proposition~\ref{prop3.3}, all of these mixed graphs have spectral radii larger than $\rho^\ast,$ as desired.

{\bf Girth 8}. Let $\widetilde{C}_8$ be a mixed $8$-cycle contained in $M.$ If $\widetilde{C}_8$ is positive or imaginary, then $M$ contains $C_{8,9}$ or $C'_{8,10}$. By Proposition~\ref{prop3.3}, both $C_{8,9}$ and $C'_{8,10}$ have spectral radii larger than $\rho^\ast,$ as desired. In the following, we consider the case that $\widetilde{C}_8$ is negative, and distinguish 2 cases.

{\bf 8.1)}. Suppose there is a mixed $k$-cycle sharing a path of length $4$ with $\widetilde{C}_8.$

For $k=8,$ $M$ contains $3$ mixed $8$-cycles. It is not possible that all of these mixed $8$-cycles are negative. Hence, $M$ contains $C_{8,9}$ or $C'_{8,10}$. By Proposition~\ref{prop3.3}, both $C_{8,9}$ and $C'_{8,10}$ have spectral radii larger than $\rho^\ast,$ as desired. For $k\geq9,$ $M$ contains $C''_{8,11}$. By Proposition~\ref{prop3.3}, $\rho(C''_{8,11})>\rho^\ast,$ as desired.

{\bf 8.2)}. Suppose there is no mixed cycle sharing a path of length $4$ with $\widetilde{C}_8.$

Now $M$ contains $C''_{8,11}$. By Proposition~\ref{prop3.3}, $\rho(C''_{8,11})>\rho^\ast,$ as desired.

{\bf Girth 7 or at least 9}. $M$ contains one of $C_{k,k+1},\,C'_{k,k+2}$ for $k=7$ or $k\geq 9$, $C''_{k,k+1}$ for $k=7$ and $C''_{k,k+3}$ for $k\geq 9$. By Proposition~\ref{prop3.3}, all of these mixed graphs have spectral radii larger than $\rho^\ast,$ as desired.
\end{proof}
\begin{proof}[Proof of Theorem~\ref{thm3.5}]
The proof follows by Propositions~\ref{prop3.1}, \ref{prop3.3} and \ref{prop3.4}.
\end{proof}

\section{\normalsize Limit points for the spectral radius of the Hermitian adjacency matrix of mixed graphs}\setcounter{equation}{0}
In this section, based on the results in \cite{H1972,S1989,BB2024} and Theorem \ref{thm3.5}, we are to establish all the $H$-limit points for mixed graphs, thereby completely solving the first part of the Hoffman program with respect to the Hermitian adjacency matrix of mixed graphs.

For every integer $k>0,$ let $\beta_k$ be the largest positive root of $x^k(x^2-x-1)+1.$ Define
\begin{align}\label{eq:4.01}
\eta_k=\beta_k^\frac{1}{2}+\beta_k^{-\frac{1}{2}}.
\end{align} 

Hoffman \cite{H1972} and Shearer \cite{S1989} determined all $A$-limit points for simple graphs.
\begin{prop}[\cite{H1972}]\label{prop4.1}
The numbers 
$
2=\eta_1<\eta_2<\cdots
$ 
are precisely the limit points for the adjacency spectral radii of simple graphs smaller than $\sqrt{2+\sqrt{5}}.$ Moreover, $\lim_{n\rightarrow\infty}\rho(T_{1,k,n})=\eta_k.$
\end{prop}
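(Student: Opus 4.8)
The plan is to establish the two assertions separately: first that $\lim_{n\to\infty}\rho(T_{1,k,n})=\eta_k$, which shows each $\eta_k$ is a limit point and (together with monotonicity) that the sequence increases to $\rho^\ast$; and then, conversely, that every limit point below $\rho^\ast$ coincides with some $\eta_k$, the latter being the substantial part. For the limit formula I would apply the forest case of Lemma~\ref{lem2.3} at the centre of degree three of $T_{1,k,n}$; since deleting the centre splits the tree into paths, this gives
\begin{equation}
\Phi(T_{1,k,n},x)=(x^2-1)\Phi(P_k,x)\Phi(P_n,x)-x\Phi(P_{k-1},x)\Phi(P_n,x)-x\Phi(P_k,x)\Phi(P_{n-1},x).
\end{equation}
Each $T_{1,k,n-1}$ is a proper induced subgraph of $T_{1,k,n}$, so by Lemma~\ref{lem2.1} and Perron--Frobenius the numbers $\rho(T_{1,k,n})$ are strictly increasing in $n$ and bounded by $\rho^\ast$; hence the limit $\gamma_k$ exists and is approached through distinct values. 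Dividing by $\Phi(P_n,x)$ and letting $n\to\infty$, the closed form \eqref{eq:2.3} gives $\Phi(P_{n-1},x)/\Phi(P_n,x)\to\varphi(x)^{-1}$ for $x>2$, so $\gamma_k$ satisfies $(x^2-1)\Phi(P_k,x)-x\Phi(P_{k-1},x)-x\varphi(x)^{-1}\Phi(P_k,x)=0$. Writing $x=\varphi(x)+\varphi(x)^{-1}$ and inserting the path formula, this collapses to $\varphi(x)^{2k}(\varphi(x)^4-\varphi(x)^2-1)+1=0$; with $s=\varphi(x)^2$ it becomes $s^k(s^2-s-1)+1=0$, so $\varphi(\gamma_k)^2=\beta_k$ and $\gamma_k=\beta_k^{1/2}+\beta_k^{-1/2}=\eta_k$. (The degenerate case $k=1$ is checked directly: $\beta_1=1$, $\eta_1=2$.)

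To see that $2=\eta_1<\eta_2<\cdots\to\rho^\ast$ I would analyse $f_k(s)=s^k(s^2-s-1)+1$ directly. From $f_k(\beta_k)=0$ one gets $\beta_k^2-\beta_k-1<0$, hence $1<\beta_k<\phi:=\frac{1+\sqrt5}{2}$; the identity $f_{k+1}(s)=sf_k(s)-s+1$ yields $f_{k+1}(\beta_k)=1-\beta_k<0$ while $f_{k+1}(\phi)=1>0$, forcing $\beta_k<\beta_{k+1}<\phi$. Since $s\mapsto s^{1/2}+s^{-1/2}$ is increasing for $s>1$ and $\beta_k\to\phi$, the $\eta_k$ increase to $\phi^{1/2}+\phi^{-1/2}=\sqrt{2+\sqrt5}=\rho^\ast$.

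For the converse, a limit point $\gamma<\rho^\ast$ is realised by a sequence of graphs with distinct spectral radii; passing to components attaining the spectral radius, I may assume they are connected, and for all large indices their spectral radius is below $\rho^\ast$, so by Proposition~\ref{prop3.1} each lies in one of the listed families. The sporadic graphs and each finite subfamily contribute only finitely many values and are discarded, and every $C_n$ contributes only the value $2=\eta_1$. By Lemma~\ref{lem2.05} the diameters tend to infinity, so infinitely many graphs come from a genuinely infinite family with an unbounded path: $P_n$, the families $T_{1,b,c}$ and $T_{2,2,c}$, or the double-pendant caterpillars $Q_{a,b,c}$. One then computes the possible limits: $\rho(P_n)\uparrow 2=\eta_1$; for $T_{1,b,c}$ with $c=k$ fixed and $b\to\infty$ the computation above gives $\eta_k$, whereas $\min(b,c)\to\infty$ pushes the limit up to $\rho^\ast$ (by interlacing against $T_{1,j,j}$) and is excluded; and the same path-attachment computation gives $\lim_{c\to\infty}\rho(T_{2,2,c})=\rho^\ast$, again excluded.

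The \textbf{main obstacle} is the double-pendant family $Q_{a,b,c}$, and more generally isolating the correct ``attach a long path'' limit lemma. As $b\to\infty$ the two degree-three vertices drift apart, and the local structure at each becomes that of a half-infinite path carrying a pendant edge at distance $a$, respectively $c$, from the near end—that is, the ends of $T_{1,a,\infty}$ and $T_{1,c,\infty}$. I would make this precise by computing $\Phi(Q_{a,b,c},x)$ with Lemma~\ref{lem2.3}, dividing out the dominant path factor, and letting $b\to\infty$; the limiting characteristic equation factors into the two one-ended equations, so its largest relevant root is $\max(\eta_a,\eta_c)=\eta_c$ since $c\ge a$. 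Establishing this factorisation cleanly—equivalently, the principle that the spectral radius of a long graph with two localised ends converges to the larger of the two one-ended limits—is the technical heart of the converse. Once it is in place, every infinite family yields a limit equal either to some $\eta_k$ or to $\rho^\ast$, and combined with the forward direction this shows that the limit points below $\rho^\ast$ are precisely $\eta_1<\eta_2<\cdots$.
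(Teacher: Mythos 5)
The paper contains no proof of this proposition: it is Hoffman's theorem, imported verbatim from \cite{H1972} (only the limit formula $\lim_{n\rightarrow\infty}\rho(T_{1,k,n})=\eta_k$ is actually used later, in the proof of Theorem~\ref{thm4.6}). So your proposal can only be measured against Hoffman's original argument, which it in fact reconstructs faithfully: the same attach-a-long-path limit computation, the same reduction of the limiting equation to $s^k(s^2-s-1)+1=0$ via $s=\varphi(x)^2$, and the same converse bookkeeping over the classification that appears here as Proposition~\ref{prop3.1}. I checked the details of your forward direction: the displayed characteristic polynomial of $T_{1,k,n}$ is correct (expand Lemma~\ref{lem2.3} at the centre, $\Phi(P_1,x)=x$); the ratio $\Phi(P_{n-1},x)/\Phi(P_n,x)\rightarrow\varphi(x)^{-1}$ for $x>2$ follows from \eqref{eq:2.3}; substituting $x=\varphi+\varphi^{-1}$ does collapse the limiting equation to $\varphi^{2k}(\varphi^{4}-\varphi^{2}-1)+1=0$; the identity $f_{k+1}(s)=sf_k(s)-s+1$ and the resulting chain $\beta_k<\beta_{k+1}<\frac{1+\sqrt{5}}{2}$ are right; and your side claim $\lim_{c\rightarrow\infty}\rho(T_{2,2,c})=\rho^\ast$ is correct (its limiting equation reduces to $\varphi^4-\varphi^2-1=0$).

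Three points need attention, one of which you flagged yourself. First, two minor tightenings: to conclude that $\gamma_k$ is a root of the limit function you need $\gamma_k>2$ (true for $k\geq2$, since $T_{1,k,n}$ then properly contains $T_{1,2,5}$, whose spectral radius is $2$) together with locally uniform convergence of $\Phi(T_{1,k,n},x)/\Phi(P_n,x)$ on compact subsets of $(2,\infty)$; and the step ``$\varphi(\gamma_k)^2=\beta_k$'' tacitly assumes $f_k$ has a unique root in $(1,\infty)$ --- otherwise $\gamma_k$ could a priori correspond to a smaller root. This is easy to supply: $f_k\geq1$ on $[\frac{1+\sqrt5}{2},\infty)$, while on $(1,\frac{1+\sqrt5}{2})$ one has $f_k(s)=1-s^k(1+s-s^2)$ with $s^k(1+s-s^2)$ unimodal, giving exactly one root there; note also that $1<\beta_k$ does not follow from $\beta_k^2-\beta_k-1<0$ alone but from $f_k'(1)=1-k<0$ for $k\geq2$. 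Second, the substantive gap is the one you name: the two-ended limit $\lim_{b\rightarrow\infty}\rho(Q_{a,b,c})=\max(\eta_a,\eta_c)$ is asserted, not proved, and the converse is complete only modulo it. This is precisely the content of Hoffman's key lemma, and your proposed route does work: the recurrence \eqref{eq:2.2} has transfer-matrix eigenvalues $\varphi(x)^{\pm1}$, so after normalizing $\Phi(Q_{a,b,c},x)$ by $\varphi(x)^{b}$ the cross terms vanish as $b\rightarrow\infty$ and the limit function factors as the product of the two one-ended functions, whose largest roots are $\eta_a$ and $\eta_c$. The lower bound $\liminf_b\rho(Q_{a,b,c})\geq\eta_c$ is immediate from the induced subgraph $T_{1,c,a+b}$ and Lemma~\ref{lem2.1}; if you want existence of the limit before identifying it, the Hoffman--Smith internal-path subdivision lemma gives monotonicity in $b$. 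With that lemma written out, your reconstruction is a complete and essentially canonical proof of the cited result.
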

\begin{prop}[\cite{S1989}]\label{prop4.2}
Each real number $\lambda\geq\sqrt{2+\sqrt{5}}$ is a limit point for the adjacency spectral radii of a suitable sequence of trees.
\end{prop}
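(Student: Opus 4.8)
The plan is to reduce the statement to a density assertion and then to realize the dense set through an explicit family of caterpillar trees analyzed by transfer matrices. First I would observe that it suffices to prove that the set $\mathcal{S}=\{\rho(T): T\text{ a tree}\}$ is dense in $[\sqrt{2+\sqrt5},\infty)$. Indeed, $\mathcal{S}$ is a countable subset of $\mathbb{R}$, so density in an interval $I$ forces every point of $I$ to be an accumulation point of infinitely many distinct values of $\mathcal{S}$; selecting trees that realize those distinct radii yields precisely the sequences demanded by the definition of an $A$-limit point. Equivalently, the derived set $\mathcal{S}'$ is closed, so once the limit points exhibited below are shown to be dense in $I$ we get $I\subseteq\overline{\mathcal{S}'}=\mathcal{S}'$. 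Thus the whole burden is to bring tree spectral radii arbitrarily close to every prescribed $\lambda\ge\sqrt{2+\sqrt5}$.

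For the construction I would use caterpillars. Let $C_m(\vec d)$ be the tree with spine $v_1\cdots v_m$ carrying $d_i\ge 0$ pendant leaves at $v_i$. Eliminating each leaf $\ell\sim v_i$ from the eigenvalue equation $Ax=\lambda x$ via $x_\ell=x_{v_i}/\lambda$ collapses the spine equations to the Jacobi-type system
$$x_{i-1}+x_{i+1}=\Big(\lambda-\tfrac{d_i}{\lambda}\Big)x_i,$$
so $\rho(C_m(\vec d))$ is the largest $\lambda>0$ admitting a positive solution. For a pattern $\vec d$ periodic of period $p$, the radii $\rho(C_m(\vec d))$ increase with $m$ (by interlacing, Lemma~\ref{lem2.1}, since a shorter caterpillar sits inside a longer one as an induced subgraph) and are bounded above (e.g.\ by the maximum degree), hence converge to a limit $\Lambda(p,\vec d)$. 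This limit is the top of the spectrum of the infinite periodic caterpillar and is read off, in the standard way, from the one-period product of transfer matrices $M_i=\left(\begin{smallmatrix}\lambda-d_i/\lambda & -1\\ 1 & 0\end{smallmatrix}\right)$: $\Lambda(p,\vec d)$ is the largest $\lambda$ at which $M_p\cdots M_1$ becomes parabolic, i.e.\ $|\operatorname{tr}(M_p\cdots M_1)|=2$ (the upper band edge). Since the $\rho(C_m(\vec d))$ are strictly increasing and distinct, each $\Lambda(p,\vec d)$ already lies in $\mathcal{S}'$, and $\mathcal{S}$ accumulates at it from below; so it remains only to show that the set $\{\Lambda(p,\vec d)\}$ is dense in $[\sqrt{2+\sqrt5},\infty)$.

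To locate the range I would first record that a constant pattern $d_i\equiv a$ forces $\lambda-a/\lambda\in[-2,2]$, whose top gives $\Lambda=1+\sqrt{1+a}$; in particular $a=0$ recovers the path value $2$, and $\sqrt{2+\sqrt5}$ sits between the $a=0$ and $a=1$ endpoints. Since the consecutive intervals $[\,1+\sqrt{1+a},\,1+\sqrt{2+a}\,]$ tile $[2,\infty)\supseteq[\sqrt{2+\sqrt5},\infty)$, it is enough to fill each such interval densely using two-valued patterns with pendant counts in $\{a,a+1\}$. Heuristically, if a fraction $t\in[0,1]$ of spine vertices carry $a+1$ leaves (the rest carrying $a$), one expects the band edge $\Lambda$ to vary continuously and monotonically with $t$ as $p\to\infty$, interpolating between the two endpoints; rational frequencies $t$ then furnish a dense set of band tops, and an intermediate-value argument in $t$ covers the interval.

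The hard part will be exactly this last density step, which is where the argument must be made rigorous rather than heuristic. The subtlety is that the \emph{top} of the spectrum of an aperiodic or random two-valued potential need not depend only on the average frequency $t$: long runs of the larger pendant count can push the band edge upward, so continuity and monotonicity of $\Lambda$ in $t$ are not automatic. I would control this by restricting to \emph{periodic, balanced} patterns of finite period $p$, for which the band edge is a well-defined continuous function of the pattern, and by proving a quantitative estimate bounding how far a single modification of $\vec d$ can move the largest root of $|\operatorname{tr}(M_p\cdots M_1)|=2$. Letting $p\to\infty$ then makes the admissible frequencies dense while forcing the gaps between consecutive attainable band tops to shrink to zero, yielding density on each tiling interval and hence on the whole ray. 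Carrying out this transfer-matrix perturbation estimate carefully is the technical heart of the matter, and is essentially the content of Shearer's original construction in~\cite{S1989}.
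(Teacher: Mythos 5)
Note first that the paper itself contains no proof of this statement: Proposition~\ref{prop4.2} is quoted directly from Shearer's paper, so your attempt has to be judged against Shearer's actual argument. Your reduction to density of $\mathcal{S}=\{\rho(T)\}$ in $[\sqrt{2+\sqrt{5}},\infty)$ is sound, and the caterpillar/transfer-matrix setup is correct as far as it goes (the constant-pattern band tops $1+\sqrt{1+a}$ and the tiling of $[2,\infty)$ check out). But the step you yourself flag as ``the technical heart'' is not merely deferred --- as stated, it is false, and in a way that shows the approach cannot work without a new idea. If band tops of two-valued $\{0,1\}$-patterns interpolated densely across the whole first tiling interval $[2,1+\sqrt{2}]$ as the frequency $t$ runs over rationals, then every $\lambda\geq 2$ would be a limit point of tree spectral radii, contradicting Hoffman's theorem (Proposition~\ref{prop4.1} of this paper): below $\sqrt{2+\sqrt{5}}$ the only limit points are the discrete values $\eta_k$. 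Concretely, your proposed single-site perturbation lemma has a uniform counterexample: the all-zero pattern of period $p$ has band top $2$, while the pattern with a single pendant leaf per period has band top at least $\sqrt{2+\sqrt{5}}$ for \emph{every} $p$, because the corresponding infinite caterpillar contains each $T_{1,k,n}$ as an induced subgraph and $\sup_k \eta_k=\sqrt{2+\sqrt{5}}$. So one modification moves the band edge by at least $\sqrt{2+\sqrt{5}}-2$ no matter how large $p$ is, and the gaps between attainable band tops do not shrink to zero. The same jump recurs at the bottom of every interval: a constant-$a$ pattern with one extra pendant per long period exceeds $1+\sqrt{1+a}$ by a fixed amount, so two-valued $\{a,a+1\}$ band tops are not dense near the lower endpoint of their interval either.

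The deeper symptom is that the number $\sqrt{2+\sqrt{5}}$ never arises in your argument --- it appears only as an unexplained cutoff --- whereas its emergence as the exact threshold is the entire content of the theorem; it is precisely the discontinuity exhibited above (sparse pendants drive the band top down to $\sqrt{2+\sqrt{5}}$, not to $2$). Shearer's proof is structured quite differently: rather than proving density of band edges of periodic patterns, he fixes $\lambda\geq\sqrt{2+\sqrt{5}}$ and builds a single caterpillar adaptively, choosing the pendant count at each successive spine vertex greedily (as large as possible while keeping the spectral radius of the truncation below $\lambda$), and shows via the ratio recurrence for the characteristic polynomials that the truncated radii form a strictly increasing sequence converging to $\lambda$; the hypothesis $\lambda\geq\sqrt{2+\sqrt{5}}$ is exactly what guarantees the greedy choice never stalls. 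Your outline would become a proof only if you established density of band tops on $[\sqrt{2+\sqrt{5}},\infty)$ (not on the full tiling intervals) together with a perturbation estimate valid only above the threshold --- and proving those two facts is essentially equivalent to redoing Shearer's analysis, not a shortcut around it.
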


For every integer $k\geq0,$ let $\gamma_k$ be the largest positive root of $x^{k+2}(x^2-x-1)+x^2-1,$
and let $\vartheta$ be the largest positive root of $x^6-2x^5+x^4-x^2+x-1.$ Define
\begin{align}\label{eq:4.02}
\text{$\zeta_k=\gamma_k^\frac{1}{2}+\gamma_k^{-\frac{1}{2}}$ and $\xi=\vartheta^\frac{1}{2}+\vartheta^{-\frac{1}{2}}$.}
\end{align} 

Belardo and Brunetti \cite{BB2024} determined all $A$-limit points for signed graphs.
\begin{prop}[\cite{BB2024}]\label{prop4.3}
The numbers $\xi$ and $\zeta_0<\zeta_1<\cdots$ are precisely the limit points for the adjacency spectral radii of signed graphs which cannot be obtained from sequences of simple graphs.
\end{prop}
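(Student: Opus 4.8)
The plan is to prove Proposition~\ref{prop4.3} by transporting Hoffman's analysis of simple graphs (Proposition~\ref{prop4.1}) to the class of signed graphs, using the recurrence machinery of Lemma~\ref{lem2.3} and Corollary~\ref{cor2.03}, the closed form \eqref{eq:2.3} for path polynomials, interlacing (Lemma~\ref{lem2.1}), the diameter lemma (Lemma~\ref{lem2.05}), and the classification of signed graphs of small spectral radius. There are two directions to establish: that each of $\xi$ and the $\zeta_k$ is a limit point realized by a sequence of signed graphs but by no sequence of simple graphs, and that no further such limit points exist below $\sqrt{2+\sqrt5}$.

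For the first direction I would exhibit, for each $k\ge0$, an explicit family $(\dot S_{k,n})_n$ of connected signed graphs consisting of a fixed core carrying a negative cycle (so that $\dot S_{k,n}$ is genuinely signed and not switching equivalent to any all-positive graph) with a pendant path of order $n$ attached, together with one sporadic family converging to $\xi$. Expanding the characteristic polynomial by Lemma~\ref{lem2.3} and Corollary~\ref{cor2.03} reduces everything to path polynomials; inserting \eqref{eq:2.3} and writing $\varphi=\varphi(x)$, the polynomial equals, up to a nonvanishing factor, an expression of the form $A(x)+B(x)\varphi(x)^{-2n}$ with $A,B$ polynomials in $\varphi(x)$. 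Since $\varphi(x)>1$ for $x>2$, letting $n\to\infty$ forces the spectral radius to the largest root $x>2$ of $A(x)=0$. Substituting $\gamma=\varphi(x)^2$ (so that $\gamma^{1/2}+\gamma^{-1/2}=x$) turns $A(x)=0$ into exactly $\gamma^{k+2}(\gamma^2-\gamma-1)+\gamma^2-1=0$, respectively the degree-six relation for $\vartheta$; hence the limit is $\gamma_k^{1/2}+\gamma_k^{-1/2}=\zeta_k$, respectively $\xi$, by \eqref{eq:4.02}. Interlacing as $n$ grows yields strictly increasing spectral radii, so each value is a genuine limit point, and since by Proposition~\ref{prop4.1} the only simple-graph limit points below $\sqrt{2+\sqrt5}$ are the $\eta_k$, comparing defining polynomials (or numerical values) shows $\xi,\zeta_k\notin\{\eta_k\}$, so none of them arises from simple graphs.

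For the converse, let $(\dot G_m)_m$ be any sequence of connected signed graphs with pairwise distinct spectral radii converging to some $\gamma<\sqrt{2+\sqrt5}$ not realized by any simple-graph sequence. By Lemma~\ref{lem2.05} the diameters tend to infinity, so for large $m$ every $\dot G_m$ has spectral radius below $\sqrt{2+\sqrt5}$ and therefore lies in the classified list of Wang et al.~\cite{WDHL2023}, together with \cite{MS2007} for the portion at most $2$. Each graph in that list is a bounded core carrying pendant paths; since the sequence is infinite with unbounded diameter, infinitely many members share a common core shape with a single path growing to infinity, and because $\gamma$ is not a simple-graph limit that common core must contain a negative cycle. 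Rerunning the computation of the first direction over the finitely many admissible negatively-signed cores then shows that the only attainable limits are the $\zeta_k$ and $\xi$, which finishes the proof.

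The hard part will be this converse step: it relies on the full signed-graph classification below $\sqrt{2+\sqrt5}$ as an input and then demands careful bookkeeping over all core shapes—unbalanced cycles, trees carrying a single negative cycle, and theta-type cores—to confirm that each produces either an excluded $\eta_k$ or one of the $\zeta_k,\xi$, and nothing else. One must also verify the purely algebraic facts that the roots $\gamma_k$ and $\vartheta$ behave as claimed, yielding the strict chain $\zeta_0<\zeta_1<\cdots$ and the separation of both signed families from $\{\eta_k\}$; this reduces to a monotonicity analysis of $x^{k+2}(x^2-x-1)+x^2-1$ in $k$, routine in principle but needing care to rule out accidental coincidences among limit points.
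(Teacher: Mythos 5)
The first thing to note is that the paper does not prove Proposition~\ref{prop4.3} at all: it is imported verbatim from Belardo and Brunetti \cite{BB2024}, so there is no in-paper argument to compare yours against. Your sketch is, in outline, a reconstruction of the strategy of the cited source: realize each $\zeta_k$ and $\xi$ by a family consisting of a fixed core carrying a negative cycle with one growing pendant path (these are the signed analogues of the graphs $\Omega_{k,n}$, $\Omega'_n$ that the paper borrows in Proposition~\ref{prop4.4}), reduce the characteristic polynomials to path polynomials via the recurrence and the closed form \eqref{eq:2.3}, and for the converse invoke the classification of signed graphs with spectral radius at most $\sqrt{2+\sqrt{5}}$ from \cite{WDHL2023} and \cite{MS2007} and enumerate the limit values of its infinite families. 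That is the right skeleton.

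As a proof, however, your converse has a genuine hole, which you only partly acknowledge. The step ``infinitely many members share a common core shape with a single path growing to infinity'' is asserted, not proved: the classified families carry several free parameters (trees of type $Q_{a,b,c}$, cycles with attached paths, etc.), and a sequence with pairwise distinct radii converging to $\gamma$ could a priori let two parameters grow simultaneously; one must show that such sequences converge to $\sqrt{2+\sqrt{5}}$ itself (as in Hoffman's simple-graph analysis) and are therefore excluded by $\gamma<\sqrt{2+\sqrt{5}}$, and only then does the one-growing-path reduction hold. Likewise ``rerunning the computation over the finitely many admissible negatively-signed cores'' is precisely the content of the theorem rather than a routine afterthought: without actually listing the cores extracted from \cite{WDHL2023} and computing each limit, you have not excluded further limit points, so the proposal is a plan for a proof, not a proof. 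Two smaller points: interlacing gives only $\rho(\dot S_{k,n})\leq\rho(\dot S_{k,n+1})$, whereas the definition of limit point requires pairwise distinct radii, so strictness needs an extra argument (e.g.\ a sign analysis of the characteristic polynomial at the spectral radius, as the paper does in Lemma~\ref{lem4.5}); and your restriction of the converse to $\gamma<\sqrt{2+\sqrt{5}}$ should be justified explicitly by Proposition~\ref{prop4.2}, since Shearer makes every $\gamma\geq\sqrt{2+\sqrt{5}}$ obtainable from sequences of trees and hence irrelevant to the statement being proved.
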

\begin{figure}
\begin{center}
\psfrag{1}{$k$}\psfrag{2}{$n$}\psfrag{3}{$\Omega_{k,n}$}\psfrag{4}{$\Omega'_n$}
\includegraphics[width=140mm]{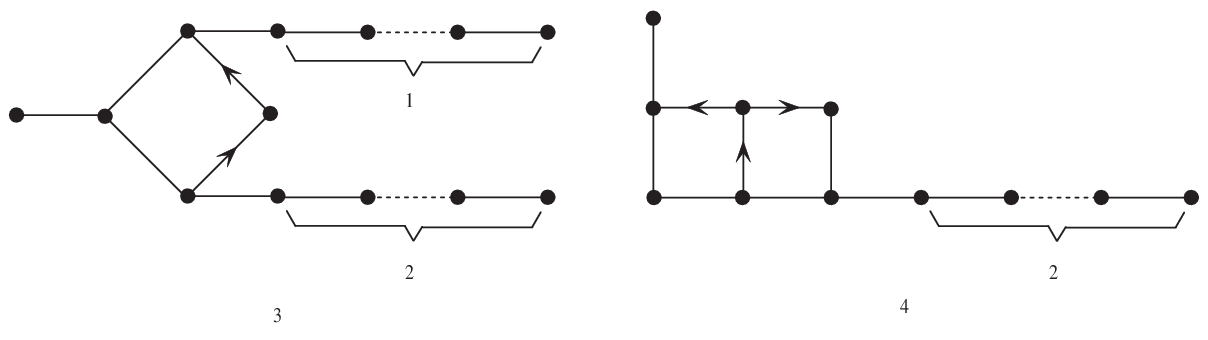} \\
  \caption{Mixed graphs $\Omega_{k,n}$ and $\Omega'_n$.}\label{fig11}
\end{center}
\end{figure}

%It follows from Lemma \ref{lem2.06} that \cite[Proposition 4.2]{BB2024} gives the sequences of mixed graphs whose limit points are $\zeta_k$ for $k\geq0$ and $\xi,$ respectively.
\begin{prop}\label{prop4.4} 
For $k\geq0,$ let $\Omega_{k,n}$ and $\Omega'_n$ be the mixed graphs depicted in Fig.~\ref{fig11}. Then
$$
\text{$\lim_{n\rightarrow\infty}\rho(\Omega_{k,n})=\zeta_k$   and $\lim_{n\rightarrow\infty}\rho(\Omega_{n}')=\xi$.}  
$$
\end{prop}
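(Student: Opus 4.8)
The plan is to compute the characteristic polynomials $\Phi(\Omega_{k,n},x)$ and $\Phi(\Omega'_n,x)$ explicitly, identify a single ``finite'' factor whose limiting root determines the spectral radius, and then show that this root converges to $\zeta_k$ (respectively $\xi$) by matching it against the defining polynomial $x^{k+2}(x^2-x-1)+x^2-1$ (respectively $x^6-2x^5+x^4-x^2+x-1$). The key mechanism is that each $\Omega_{k,n}$ is obtained by attaching a long path of order growing with $n$ to a fixed ``head'' gadget, so repeatedly applying the recurrence of Lemma~\ref{lem2.3} (together with Corollary~\ref{cor2.03} for any imaginary cycle present in the head) peels off the pendant path and expresses $\Phi(\Omega_{k,n},x)$ as a linear combination of path polynomials $\Phi(P_m,x)$ with coefficients depending only on $k$ and $x$.

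First I would set up the reduction: deleting the degree-one vertex at the tip of the long path and using Lemma~\ref{lem2.3} gives a recurrence $\Phi(\Omega_{k,n},x)=x\,\Phi(\Omega_{k,n-1},x)-\Phi(\Omega_{k,n-2},x)$ for $n$ large, i.e. the same linear recurrence as the paths. Hence there exist polynomials $f_k(x)$ and $g_k(x)$, independent of $n$, with
\[\label{eq:plan1}
\Phi(\Omega_{k,n},x)=f_k(x)\,\Phi(P_{n},x)+g_k(x)\,\Phi(P_{n-1},x),
\]
and similarly for $\Omega'_n$. Next I would substitute the Binet-type closed form \eqref{eq:2.3} for $\Phi(P_m,x)$ in terms of $\varphi(x)=\tfrac{x+\sqrt{x^2-4}}{2}$. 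For $x>2$ one has $\varphi(x)>1$, so as $n\to\infty$ the dominant behaviour of $\Phi(\Omega_{k,n},x)$ is governed by $\varphi(x)^{2n}$ times a fixed function of $x$; setting that function to zero yields the limiting secular equation. The claim is that, after clearing denominators and substituting $x=t^{1/2}+t^{-1/2}$ (equivalently $\varphi(x)^2=t$, so that $t$ is the largest root of the relevant polynomial), this secular equation collapses precisely to $\gamma_k$ being the largest positive root of $x^{k+2}(x^2-x-1)+x^2-1$, whence $\lim_{n\to\infty}\rho(\Omega_{k,n})=\gamma_k^{1/2}+\gamma_k^{-1/2}=\zeta_k$ by \eqref{eq:4.02}. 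The computation for $\Omega'_n$ is entirely analogous and should reproduce $\vartheta$ and $\xi$.

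To make the limit rigorous rather than merely formal, I would argue that $\rho(\Omega_{k,n})=\lambda_1(\Omega_{k,n})$ (using Lemma~\ref{lem2.02} when the head contains only imaginary, i.e. non-real, cycles so the spectrum is symmetric about zero, exactly as in Lemmas~\ref{lem3.01} and~\ref{lem3.02}), and that $(\lambda_1(\Omega_{k,n}))_n$ is monotone and bounded: interlacing (Lemma~\ref{lem2.1}) gives monotonicity as the path lengthens, while Lemma~\ref{lem2.04} together with bounded maximum degree gives a uniform bound. The limit $L_k$ then satisfies the limiting secular equation, and a sign check of $\Phi(\Omega_{k,n},x)$ at $x$ slightly above and below $\zeta_k$ (of the kind carried out in \eqref{eq:3.04}--\eqref{eq:3.05}) pins down $L_k=\zeta_k$. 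The main obstacle I anticipate is the algebraic bookkeeping in identifying $f_k(x)$ and $g_k(x)$ for general $k$ and then verifying that the resulting transcendental-looking secular equation in $\varphi(x)$ reduces \emph{exactly} to the rational polynomial defining $\gamma_k$; this requires the change of variable $\varphi(x)^2=t$ to be executed carefully so that spurious factors of $\varphi(x)^{\pm1}$ and $(x^2-4)^{1/2}$ cancel, leaving the clean polynomial $x^{k+2}(x^2-x-1)+x^2-1$. Once that identity is in hand, the convergence and the exclusion of the competing possibility $L_k<\zeta_k$ follow routinely.
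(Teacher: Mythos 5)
Your plan is viable, but it takes a genuinely different and much heavier route than the paper, whose entire proof is a two-line transfer argument: the mixed graphs $\Omega_{k,n}$ and $\Omega'_n$ contain no imaginary mixed cycles, so by Lemma~\ref{lem2.06} each is cycle-isomorphic --- hence cospectral --- to a signed graph on the same underlying graph, and the claimed limits are then precisely \cite[Proposition 4.2]{BB2024} for those signed graphs. Your proposal instead re-derives that signed-graph computation from scratch: peel off the growing path with Lemma~\ref{lem2.3}, write $\Phi(\Omega_{k,n},x)$ as a fixed combination of two path polynomials, substitute the closed form \eqref{eq:2.3}, extract the dominant secular equation in $\varphi(x)$, and match it to $x^{k+2}(x^2-x-1)+x^2-1$ under $\varphi(x)^2=t$. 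That is exactly the Hoffman-style mechanism underlying the definitions \eqref{eq:4.01}--\eqref{eq:4.02} and the paper's own Lemma~\ref{lem4.5}, so it would succeed, at the cost of the algebraic bookkeeping you yourself flag as the main obstacle (and which the citation to \cite{BB2024} is designed to avoid). What your route buys is self-containedness: it does not lean on the external signed-graph result, and it would also work for heads containing imaginary cycles, where the transfer via Lemma~\ref{lem2.06} is unavailable.

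One caveat needs repair: your justification that $\rho(\Omega_{k,n})=\lambda_1(\Omega_{k,n})$ guesses the wrong structure. These graphs contain \emph{no} imaginary mixed cycles (that is exactly what makes Lemma~\ref{lem2.06} applicable in the paper); their distinguishing cycles are real and negative. Lemma~\ref{lem2.02} requires the absence of real \emph{odd} cycles, so your parenthetical invocation of it never triggers here, and spectral symmetry holds only if all cycles involved are even. If an odd negative cycle were present (compare the negative triangle, whose signed analogue has spectrum $\{1,1,-2\}$), the spectral radius could be attained at $-\lambda_n$, and your sign check of $\Phi$ just below $\zeta_k$ controls only the right end of the spectrum. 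You would need either to verify from Fig.~\ref{fig11} that the underlying graphs are bipartite, or to run the same secular-equation sign analysis at $-x$ to bound $|\lambda_n|$ as well. With that fixed, your monotone-and-bounded argument via Lemmas~\ref{lem2.1} and \ref{lem2.04}, combined with the sign check, pins down the limits as you describe.
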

\begin{proof}
The result follows from Lemma~\ref{lem2.06} and \cite[Proposition 4.2]{BB2024}.
\end{proof}
\begin{lem}\label{lem4.5} 
It holds that 
$$
\text{$\lim_{n\rightarrow\infty}\rho(C'_n)=2$ and $\lim_{n\rightarrow\infty}\rho(C'_{n-1,n})=\lim_{n\rightarrow\infty}\rho(C'_{3,n})=\sqrt{2+\sqrt{5}}.$}  
$$
\end{lem}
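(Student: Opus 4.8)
The plan is to treat the three limits separately, handling the first by a direct squeeze and the latter two by an asymptotic analysis of the explicit characteristic polynomials in \eqref{eq:3.04} and \eqref{eq:3.05}.

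For $\lim_{n\to\infty}\rho(C'_n)=2$, I would bound $\rho(C'_n)$ from both sides. Deleting any vertex of $C'_n$ yields a mixed path on $n-1$ vertices, which by Lemma~\ref{lem2.5} is switching isomorphic (hence cospectral) to $P_{n-1}$; interlacing (Lemma~\ref{lem2.1}) then gives $\rho(C'_n)\ge\lambda_1(C'_n)\ge\lambda_1(P_{n-1})=2\cos(\pi/n)$, the last value being the classical largest path eigenvalue. For the upper bound, Lemma~\ref{lem2.04} gives $\rho(C'_n)\le\Delta(C'_n)=2$. Since $2\cos(\pi/n)\to2$, the squeeze forces $\rho(C'_n)\to2$.

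For the two remaining families I would first record that Lemmas~\ref{lem3.01} and \ref{lem3.02} already give $\rho(C'_{n-1,n})<\rho^\ast$ and $\rho(C'_{3,n})<\rho^\ast$, so $\limsup\le\rho^\ast$ in both cases; it remains to prove $\liminf\ge\rho^\ast$. The key algebraic fact I would isolate is that, for $x>2$, the equation $\varphi(x)^3=x$ combined with $\varphi(x)^2-x\varphi(x)+1=0$ forces $\varphi(x)^2=\tfrac{1+\sqrt5}{2}$ and hence $x^2=2+\sqrt5$, i.e.\ $x=\rho^\ast$; since $\varphi$ is strictly increasing on $(2,\infty)$ and $\varphi(2)^3-2=-1<0$, it follows that $\varphi(x)^3-x<0$ for every $x\in(2,\rho^\ast)$. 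I would then fix such an $x$ and read off the large-$n$ sign of $\Phi$ from the bracketed expressions in \eqref{eq:3.04} and \eqref{eq:3.05}. Collecting the dominant power of $\varphi(x)$ in each bracket, the leading coefficient is in both cases a positive multiple of $\varphi(x)^3-x$ (for \eqref{eq:3.05} directly, and for \eqref{eq:3.04} after the identity $(x^2-1)\varphi(x)-2x=\varphi(x)^3-x$). Because $\varphi(x)>1$ and the outside factor is positive, $\Phi(C'_{n-1,n},x)$ and $\Phi(C'_{3,n},x)$ are negative for all sufficiently large $n$.

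Since each characteristic polynomial is monic with value tending to $+\infty$ at $+\infty$, a negative value at $x$ forces a root exceeding $x$, so $\lambda_1>x$ for large $n$, giving $\liminf_n\lambda_1\ge x$; letting $x\uparrow\rho^\ast$ yields $\liminf\ge\rho^\ast$. As each cycle here is imaginary, Lemma~\ref{lem2.02} gives $\rho=\lambda_1$, so combining with the upper bounds shows that both families converge to $\rho^\ast$. The main obstacle is the asymptotic step: correctly extracting the dominant exponential term and verifying that its coefficient is exactly a positive multiple of $\varphi(x)^3-x$, whose sign change at $\rho^\ast$ is precisely what pins the limit; once this factorization is in hand the remaining estimates are routine.
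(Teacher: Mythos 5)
Your proposal is correct and follows essentially the same route as the paper: the first limit by squeezing $\rho(C'_n)$ between $\rho(P_{n-1})$ and $2$ (you use the degree bound of Lemma~\ref{lem2.04} where the paper compares with $\rho(C_n)$), and the other two limits by fixing $x$ slightly below $\sqrt{2+\sqrt{5}}$ and showing the bracketed expressions in \eqref{eq:3.04} and \eqref{eq:3.05} become negative for large $n$ via the sign of the dominant coefficient, forcing $\lambda_1>x$. Your key quantity $\varphi(x)^3-x$ is exactly the paper's function $f(x)=(x^2-1)\varphi(x)-2x$ (by the relation $\varphi(x)^2=x\varphi(x)-1$), and its negativity on the interval $(2,\sqrt{2+\sqrt{5}})$, which the paper derives from the monotonicity of $f$ together with $f(\sqrt{2+\sqrt{5}})=0$, you obtain equally validly from the uniqueness of the root at $\sqrt{2+\sqrt{5}}$ combined with the sign at $x=2$ and the intermediate value theorem.
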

\begin{proof}
By Lemma \ref{lem2.1} and \cite[Theorem 5.7]{GM2017}, $\rho(P_{n-1})\leq\rho(C'_n)\leq\rho(C_n),$ and so
$$
2=\lim_{n\rightarrow\infty}\rho(P_{n-1})\leq\lim_{n\rightarrow\infty}\rho(C'_n)
\leq\lim_{n\rightarrow\infty}\rho(C_n)=2.
$$
Hence, $\lim_{n\rightarrow\infty}\rho(C'_n)=2.$

{By Theorem \ref{thm3.5}, we have that $\rho(C'_{n-1,n})\leq\rho^\ast.$ On the other hand, $C'_{n-1,n}$ contains $T_{1,\lfloor\frac{n-3}{2}\rfloor,\lfloor\frac{n-3}{2}\rfloor}$ as an induced subgraph. By Lemma \ref{lem2.1} and \cite[Proposition 3.6]{H1972}, one has
$$
\rho^\ast=\lim_{n\rightarrow\infty}\rho(T_{1,\lfloor\frac{n-3}{2}\rfloor,\lfloor\frac{n-3}{2}\rfloor})
\leq\lim_{n\rightarrow\infty}\rho(C'_{n-1,n})\leq\rho^\ast,
$$
which gives $\lim_{n\rightarrow\infty}\rho(C'_{n-1,n})=\rho^\ast.$

Finally, in the proof of Lemma \ref{lem3.02}, we have shown $\lim_{n\rightarrow\infty}\rho(C'_{3,n})=\rho^\ast.$ %we have shown that $\lim_{n\rightarrow\infty}\rho(C'_{3,n})$ exists, and $\lim_{n\rightarrow\infty}\rho(C'_{3,n})=\lim_{n\rightarrow\infty}\lambda_1(C'_{3,n})>2$. Now by Lemma \ref{lem2.07}, $\lim_{n\rightarrow\infty}\rho (C'_{3,m})$ is the largest positive root of
%$$
%\varphi(x)\Phi(C_3', x)-\Phi(P_2, x)=\frac{x+\sqrt{x^2-4}}{2}(x^3-3x)-(x^2-1),
%$$ 
%which is $\rho^\ast.$ 
}

%As before, we let $\rho^\ast:=\sqrt{2+\sqrt{5}}.$ By Theorem \ref{thm3.5}, we have that $\rho(C'_{n-1,n})\leq\rho^\ast.$ By \eqref{eq:3.04}, the characteristic polynomial of $C'_{n-1,n}$ is
%\begin{align}\label{eq:4.1}
%\Phi(C'_{n-1,n},x)=\frac{1}{\varphi(x)^{n-2}(\varphi(x)^{2}-1)}[(x^2-1)(\varphi(x)^{2n-2}-1)
%-2x\varphi(x)(\varphi(x)^{2n-4}-1)].
%\end{align} 
%Note that $\varphi(x)$ is increasing on $[2,+\infty).$ For each $0<\varepsilon<\rho^\ast-2,$ one has
%\begin{align}\label{eq:4.2}
%\varphi(\rho^\ast-\varepsilon)^{n-2}(\varphi(\rho^\ast-\varepsilon)^{2}-1)
%>\varphi(2)^{n-2}(\varphi(2)^{2}-1)=0.
%\end{align} 

%On the other hand, the function $f(x):=(x^2-1)\varphi(x)-2x$ is increasing on $[2,+\infty)$. And so $f(\rho^\ast-\varepsilon)<f(\rho^\ast)=0.$ Then there is $N>0$ such that
%\begin{align}\notag
%&\ \ \ ((\rho^\ast-\varepsilon)^2-1)(\varphi(\rho^\ast-\varepsilon)^{2n-2}-1)
%-2(\rho^\ast-\varepsilon)\varphi(\rho^\ast-\varepsilon)(\varphi(\rho^\ast-\varepsilon)^{2n-4}-1) %\\ \notag
%&=f(\rho^\ast-\varepsilon)\varphi(\rho^\ast-\varepsilon)^{2n-3}+2(\rho^\ast-\varepsilon)
%\varphi(\rho^\ast-\varepsilon)
%-(\rho^\ast-\varepsilon)^2+1 \\ \notag
%&<0
%\end{align} 
%for all $n\geq N.$ Together with \eqref{eq:4.1} and \eqref{eq:4.2}, $\Phi(C'_{n-1,n},\rho^\ast-\varepsilon)<0$ and so $\rho(C'_{n-1,n})>\rho^\ast-\varepsilon$ for all $n\geq N.$ Hence, $\lim_{n\rightarrow\infty}\rho(C'_{n-1,n})=\rho^\ast.$

%The characteristic polynomial of $C'_{3,n}$ is given in \eqref{eq:3.05}. A similar discussion as above shows $\lim_{n\rightarrow\infty}\rho(C'_{3,n})=\rho^\ast.$ 
\end{proof}
\begin{thm}\label{thm4.6}
A real number $\zeta$ is a limit point for the spectral radii of mixed graphs if and only if $\zeta\in\{\eta_k|k>0\}\cup\{\xi\}\cup\{\zeta_k|k\geq0\}\cup[\sqrt{2+\sqrt{5}},+\infty),$
where $\eta_k\,(k>0)$ are defined in \eqref{eq:4.01}, $\xi$ and $\zeta_k\,(k\geq0)$ are defined in \eqref{eq:4.02}.
\end{thm}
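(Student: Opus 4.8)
The plan is to prove Theorem~\ref{thm4.6} by showing each direction separately: first that every listed value is a limit point (realizability), and second that no other values can arise (exhaustiveness). For the realizability direction, I would assemble the listed points from the tools already developed. The points in $[\sqrt{2+\sqrt5},+\infty)$ follow immediately from Proposition~\ref{prop4.2}, since any sequence of trees is in particular a sequence of mixed graphs (via Lemma~\ref{lem2.5}, a tree equals its only mixed realization up to switching isomorphism), so any $\lambda\geq\sqrt{2+\sqrt5}$ is already a limit point for simple graphs and hence for mixed graphs. The points $\eta_k$ $(k>0)$ are realized by the tree sequences $T_{1,k,n}$ of Proposition~\ref{prop4.1}, again viewed as mixed graphs. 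The genuinely ``mixed'' points $\xi$ and $\zeta_k$ $(k\geq0)$ are exactly the content of Proposition~\ref{prop4.4}: the sequences $\Omega_{k,n}$ and $\Omega'_n$ have spectral radii converging to $\zeta_k$ and $\xi$. In each case one must check the spectral radii are strictly increasing (or at least pairwise distinct) along the sequence, which is guaranteed since the graphs strictly grow and contain each other as induced subgraphs, so Lemma~\ref{lem2.1} gives monotonicity; I would state this and cite the interlacing lemma rather than recompute.

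For the exhaustiveness direction, the key structural input is Lemma~\ref{lem2.05}: along any sequence of connected mixed graphs with distinct spectral radii converging to a finite $\gamma$, the diameters tend to infinity, so the graphs become ``long.'' I would split according to whether $\gamma<\sqrt{2+\sqrt5}$ or $\gamma\geq\sqrt{2+\sqrt5}$; only the former needs work, since every value in $[\sqrt{2+\sqrt5},+\infty)$ is already in the allowed set. So suppose $\gamma<\sqrt{2+\sqrt5}$. Then for all large $k$, $\rho(M_k)\leq\sqrt{2+\sqrt5}$, and I may try to invoke the classification in Theorem~\ref{thm3.5}. The obstacle here is that Theorem~\ref{thm3.5} only classifies $C_4''$-free mixed graphs, whereas a limiting sequence could contain negative $4$-cycles. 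I would argue that any mixed graph containing a negative $4$-cycle $C_4''$ as an induced subgraph already has spectral radius exceeding $\sqrt{2+\sqrt5}$ (indeed $\rho(C_4'')=2$, but adding the long structure forced by large diameter pushes it up); more carefully, one shows that any \emph{connected} $C_4''$-containing mixed graph of sufficiently large diameter has $\rho>\sqrt{2+\sqrt5}$, so for $\gamma<\sqrt{2+\sqrt5}$ the tail of the sequence is $C_4''$-free and Theorem~\ref{thm3.5} applies.

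Having reduced to $C_4''$-free graphs with $\rho\leq\sqrt{2+\sqrt5}$, the tail $(M_k)$ consists of induced subgraphs of the finite list in Theorem~\ref{thm3.5}(i)--(v). Since diameters blow up, only the families admitting arbitrarily large diameter survive: the trees of type $T_{1,k,n}$ and $Q_{a,b,c}$ from part~(i), the cycles and cycle-with-tails from parts~(ii)--(iv), and the $\Omega$/$\Omega'$-type structures underlying the signed-graph points. For each such family I would identify the limit of $\rho$ as the relevant parameter $\to\infty$, using the explicit characteristic polynomials in Lemma~\ref{lem2.6}, Corollary~\ref{cor2.03}, and the computations of Lemmas~\ref{lem3.01}, \ref{lem3.02}, and \ref{lem4.5}. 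Concretely, Lemma~\ref{lem4.5} shows the one-arc cycle families $C'_{n-1,n}$ and $C'_{3,n}$ converge to $\sqrt{2+\sqrt5}$, the plain cycles $C'_n$ converge to $2=\eta_1$, the trees contribute the $\eta_k$ by Proposition~\ref{prop4.1}, and the cycle-isomorphic-to-signed families contribute the $\xi$ and $\zeta_k$ by Proposition~\ref{prop4.4}. Collecting these limits shows that every finite limit point below $\sqrt{2+\sqrt5}$ lies in $\{\eta_k\}\cup\{\xi\}\cup\{\zeta_k\}$, completing the characterization.

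I expect the main obstacle to be the reduction step in the second paragraph, namely ruling out negative $4$-cycles in the limiting sequence when $\gamma<\sqrt{2+\sqrt5}$. Since $C_4''$ itself has spectral radius only $2$, one cannot discard it pointwise; the argument must exploit that the diameter tends to infinity (Lemma~\ref{lem2.05}), forcing any $C_4''$-containing member to also contain a long path or additional cycle, which in combination drives $\rho$ strictly above $\sqrt{2+\sqrt5}$. Making this quantitative---exhibiting a specific forbidden induced subgraph with $\rho>\sqrt{2+\sqrt5}$ that must appear once the diameter is large enough---is the delicate part, and it is where I would spend the most care, likely by a case analysis on how the $C_4''$ attaches to the rest of the connected graph paralleling the girth-$4$ analysis in the proof of Proposition~\ref{prop3.4}.
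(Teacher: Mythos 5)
Your realizability direction matches the paper, but your exhaustiveness direction hinges on a claim that is false, and it is exactly the claim you flagged as the delicate step. You propose that every connected mixed graph containing $C_4''$ whose diameter is sufficiently large has spectral radius exceeding $\rho^\ast=\sqrt{2+\sqrt{5}}$, so that for $\gamma<\rho^\ast$ the tail of the sequence is $C_4''$-free and Theorem~\ref{thm3.5} applies. This fails: let $C''_{4,n}$ be the negative $4$-cycle with a pendant path attached (total order $n$). By Lemma~\ref{lem2.3} one computes $\Phi(C''_{4,4},x)=(x^2-2)^2$, $\Phi(C''_{4,5},x)=x(x^2-2)(x^2-3)$, $\Phi(C''_{4,6},x)=(x^2-2)(x^4-4x^2+2)$, and the pendant-path recurrence $\Phi(C''_{4,n},x)=x\Phi(C''_{4,n-1},x)-\Phi(C''_{4,n-2},x)$ gives $\Phi(C''_{4,n},2)=4>0$ for every $n$; since $\lambda_2(C''_{4,n})\leq\lambda_1(C''_{4,n-1})<2$ by Lemma~\ref{lem2.1} and induction, it follows that $\lambda_1(C''_{4,n})<2$, and by Lemma~\ref{lem2.02} (the underlying graph is bipartite) $\rho(C''_{4,n})=\lambda_1(C''_{4,n})<2$ for all $n$. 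So there are connected $C_4''$-containing mixed graphs of arbitrarily large diameter with spectral radius below $2$, let alone below $\rho^\ast$. Consequently you cannot discard negative $4$-cycles from the tail of a limiting sequence with $\gamma<\rho^\ast$, Theorem~\ref{thm3.5} never becomes applicable to the graphs $M_k$ themselves, and all sequences whose members contain negative $4$-cycles are left unhandled by your argument. A case analysis ``paralleling the girth-$4$ analysis in Proposition~\ref{prop3.4}'' cannot rescue this, since that analysis explicitly assumes $C_4''$-freeness.

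The paper's proof avoids the problem by a different dichotomy: on imaginary mixed cycles rather than on negative $4$-cycles. If infinitely many $M_k$ contain no imaginary mixed cycle, then by Lemma~\ref{lem2.06} each such $M_k$ is cycle-isomorphic, hence cospectral, to a signed graph, and Propositions~\ref{prop4.1}--\ref{prop4.3} (the already complete Hoffman program for signed graphs, which of course covers negative $4$-cycles) show directly that $\zeta$ lies in the stated set. Otherwise almost all $M_k$ contain an imaginary mixed cycle, and one splits on its length: an imaginary triangle together with $\diam(\Gamma(M_k))\rightarrow\infty$ (Lemma~\ref{lem2.05}) forces induced subgraphs $C'_{3,s}$ with $s\rightarrow\infty$, so $\zeta\geq\lim_{s\rightarrow\infty}\rho(C'_{3,s})=\rho^\ast$ by Lemma~\ref{lem4.5}; an induced imaginary $t$-cycle for fixed $t\geq4$ combined with large diameter forces $\rho(M_k)>\rho^\ast$ via Proposition~\ref{prop3.3} applied to an induced \emph{unicyclic} subgraph (whose unique cycle is imaginary, hence $C_4''$-free, so negative $4$-cycles elsewhere in $M_k$ are harmless); and in the remaining case the $M_k$ are themselves imaginary cycles, giving $\zeta=2=\eta_1$, or contain $C'_{n-1,n}$ with $n\rightarrow\infty$, giving $\zeta\geq\rho^\ast$ by Lemmas~\ref{lem2.1} and \ref{lem4.5}. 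In short, the correct replacement for your false reduction is the signed-graph reduction of Lemma~\ref{lem2.06} together with Proposition~\ref{prop4.3}; with that substitution the rest of your outline can be made to follow the paper's argument.
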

\begin{proof}
One direction follows by Propositions \ref{prop4.1}, \ref{prop4.2} and \ref{prop4.4}. To show that there are no other limit points, let $\zeta$ be a limit point for the spectral radii of mixed graphs, and let $(M_k)_{k\in \mathbb{N}}$ be a sequence of connected mixed graphs such that
$$
\text{$\rho(M_i)\neq \rho(M_j)$ whenever $i\neq j,$ and $\lim_{k\rightarrow\infty}\rho(M_k)=\zeta.$}
$$ 

If there is an infinite subsequence $(M_{k_i})_{i\in \mathbb{N}}\subseteq(M_k)_{k\in \mathbb{N}}$ such that each mixed graph in $(M_{k_i})_{i\in \mathbb{N}}$ contains no imaginary mixed cycle, then it follows from Lemma \ref{lem2.06} and Propositions \ref{prop4.1}-\ref{prop4.3} that $\zeta$ is one of the stated limit points. In the following, assume there are finitely many mixed graphs in $(M_k)_{k\in \mathbb{N}}$ containing no imaginary mixed cycles.

If there is an infinite subsequence $(M_{k_i})_{i\in \mathbb{N}}\subseteq(M_k)_{k\in \mathbb{N}}$ such that each mixed graph in $(M_{k_i})_{i\in \mathbb{N}}$ contains an imaginary mixed triangle, then by Lemma \ref{lem2.05}, $\lim_{i\rightarrow\infty}\diam(\Gamma(M_{k_i}))=+\infty.$ Then for each $s\in \mathbb{N}$, there is $N\in \mathbb{N}$ such that $M_{k_i}$ contains $C'_{3,s}$ as an induced subgraph, and so $\rho(M_{k_i})\geq\rho(C'_{3,s})$ whenever $i\geq N.$ Hence by Lemma~\ref{lem4.5},
$$
\zeta=\lim_{k\rightarrow\infty}\rho(M_k)=\lim_{i\rightarrow\infty}\rho(M_{k_i})
\geq\lim_{s\rightarrow\infty}\rho(C'_{3,s})=\sqrt{2+\sqrt{5}}.
$$

If there is an infinite subsequence $(M_{k_i})_{i\in \mathbb{N}}\subseteq(M_k)_{k\in \mathbb{N}}$ such that each mixed graph in $(M_{k_i})_{i\in \mathbb{N}}$ contains an induced imaginary mixed $t$-cycle for some fixed $t\geq 4.$ Then by Proposition~\ref{prop3.3}, $\rho(M_{k_i})>\sqrt{2+\sqrt{5}}$ for all $M_{k_i}$ with $\diam(\Gamma(M_{k_i}))\geq\lceil\frac{t}{2}\rceil+3.$ From Lemma \ref{lem2.05}, it then follows that
$$
\zeta=\lim_{k\rightarrow\infty}\rho(M_k)=\lim_{i\rightarrow\infty}\rho(M_{k_i})>\sqrt{2+\sqrt{5}}.
$$

If for all fixed $t\geq 3,$ there are finitely many mixed graphs in $(M_k)_{k\in \mathbb{N}}$ containing an induced imaginary mixed $t$-cycle, then %$\lim_{k\rightarrow\infty}g(M_k)=+\infty.$ And so 
either there is an infinite subsequence $(M_{k_i})_{i\in \mathbb{N}}\subseteq(M_k)_{k\in \mathbb{N}}$ such that each mixed graph in $(M_{k_i})_{i\in \mathbb{N}}$ is an imaginary mixed cycle, or there is an infinite subsequence $(M_{k_i})_{i\in \mathbb{N}}\subseteq(M_k)_{k\in \mathbb{N}}$ such that for each $N\in \mathbb{N},$ there is a mixed graph in $(M_{k_i})_{i\in \mathbb{N}}$ containing $C'_{n-1,n}$ as an induced subgraph for some $n\geq N$. In the former case, by Lemma \ref{lem4.5},
$$
\zeta=\lim_{k\rightarrow\infty}\rho(M_k)=\lim_{i\rightarrow\infty}\rho(M_{k_i})=2.
$$
In the latter case, by Lemmas \ref{lem2.1} and \ref{lem4.5},
$$
\zeta=\lim_{k\rightarrow\infty}\rho(M_k)=\lim_{i\rightarrow\infty}\rho(M_{k_i})
\geq\lim_{n\rightarrow\infty}\rho(C'_{n-1,n})=\sqrt{2+\sqrt{5}}.
$$

Hence, if $\zeta$ is a limit point for the spectral radii of mixed graphs, then $\zeta$ must be one of the stated limit points.
\end{proof}

\end{document}